 \DeclareFontFamily{U}{wncy}{}
    \DeclareFontShape{U}{wncy}{m}{n}{<->wncyr10}{}
    \DeclareSymbolFont{mcy}{U}{wncy}{m}{n}
    \DeclareMathSymbol{\Sha}{\mathord}{mcy}{"58} 
\theoremstyle{plain}
\newtheorem{theorem}{Theorem}[section]
\newtheorem{prop}[theorem]{Proposition}
\newtheorem{lemma}[theorem]{Lemma}
\newtheorem{cor}[theorem]{Corollary}
\newtheorem*{theoremintro*}{Theorem}
\newtheorem*{propintro*}{Proposition}
\theoremstyle{definition}
\newtheorem{defin}[theorem]{Definition}
\newtheorem{example}[theorem]{Example}
\newtheorem{examples}[theorem]{Examples}
\newtheorem{notation}[theorem]{Notation}
\theoremstyle{remark}
\newtheorem{remark}[theorem]{Remark}
\newtheorem*{ack}{Acknowledgments}
\newcommand{\au}{\text{AU}}
\newcommand{\C}{\mathbb C}
\newcommand{\F}{\mathbb F}
\newcommand{\R}{\mathbb R}
\renewcommand{\P}{\mathbb P}
\newcommand{\Q}{\mathbb Q}
\newcommand{\Char}{\text{char }}
\newcommand{\layer}{\text{layer}}
\title[Universal quadratic forms]{Universal quadratic forms over semi-global fields}
\date{\today}
\author[Cassady]{Connor Cassady}
\thanks{\textit{Mathematics Subject Classification} (2010): 11E04, 11E81, 14G12 (primary);  14H25, 13J10, 16W60 (secondary).
\\ 
\textit{Key words and phrases}: quadratic forms, universal quadratic forms, local-global principles, semi-global fields, arithmetic curves, function fields.\\
The author was supported in part by NSF grants DMS-1805439 and DMS-2102987.
}
\begin{document}
\begin{abstract}
We study anisotropic universal quadratic forms over semi-global fields; i.e., over one-variable function fields over complete discretely valued fields. In particular, given a semi-global field $F$, we compute both the $m$-invariant of $F$ and the set of dimensions of anisotropic universal quadratic forms over $F$. We also define the strong $m$-invariant of a field $k$ and show that it behaves analogously to the strong $u$-invariant of $k$, defined by Harbater, Hartmann, and Krashen. Our main tool in this study is the local-global principle for isotropy of quadratic forms over a semi-global field with respect to particular sets of overfields.
\end{abstract}
\maketitle
\vspace{-1cm}

\section*{Introduction}
A quadratic form $q$ over a field $k$ is called \textit{universal} if $q$ represents all non-zero elements of~$k$; i.e., for all $a \in k^{\times}$ there is some $x$ such that $q(x) = a$. The question of determining which quadratic forms are universal goes back hundreds of years. For example, a universality result is given by Lagrange's Four Square Theorem of 1770: all positive integers can be written as a sum of four squares. In other words, the quadratic form $x_1^2 + x_2^2 + x_3^2 + x_4^2$ over $\mathbb{Q}$ represents all positive integers. Over the rational numbers, the question of universality is typically restricted to asking if a quadratic form represents all positive integers, and has led to results like the ``15-Theorem'' of Conway-Schneeberger \cite{bhargava, conway}, the ``290-Theorem'' of Bhargava-Hanke \cite{bh05}, and the ``451-Theorem'' of Rouse \cite{rouse}. These questions have also been asked over various totally real number fields in, e.g., \cite{bk18, ky21}. What these fields have in common is that they all have infinite $u$-invariant. Recall that the $u$-\textit{invariant} of a field $k$, $u(k)$, is defined to be the maximal dimension of an anisotropic quadratic form over $k$, and if no such maximum exists we say $u(k) = \infty$ \cite[Definition~XI.6.1]{lam}. In this paper, we will be interested in studying universal quadratic forms over fields with finite $u$-invariant.

Any regular isotropic quadratic form over a field $k$ of characteristic $\ne 2$ is universal \cite[Theorem~I.3.4(3)]{lam}, so we will focus on studying anisotropic universal quadratic forms over $k$. We are particularly interested in studying the $m$-\textit{invariant} of~$k$, $m(k)$, defined in \cite{m-inv} to be the minimal dimension of an anisotropic universal quadratic form over $k$, and in studying the set~$\au(k)$ of dimensions of anisotropic universal quadratic forms over $k$. Most of our investigation is focused on these problems over a \textit{semi-global field}; i.e., over a one-variable function field over a complete discretely valued field. Such fields have been extensively studied in, e.g., \cite{cps12, hh10, weier, hhk15}. Our main approach to studying anisotropic universal quadratic forms over semi-global fields is to use the local-global principle for isotropy with respect to various sets of overfields. This approach has been used previously to compute the $u$-invariants of semi-global fields (see, e.g., \cite{cps12, hhk09, refine}).

\textbf{Main results and structure.} The structure of this manuscript is as follows. After establishing notation and discussing background in Section \ref{notation section}, we prove preliminary results about anisotropic universal quadratic forms in Section \ref{preliminaries}. 

In Section \ref{red graph and m=2} we use a combinatorial object called the \textit{reduction graph} associated to a semi-global field $F$ (defined in \cite[Section~6]{hhk15}) to give necessary and sufficient conditions for $m(F) = 2$. The main result of this section is the following (see Section \ref{red graph and m=2} for terminology).
\begin{propintro*}[\ref{m=2 iff loop}]
Let $T$ be a complete discrete valuation ring of residue characteristic $\ne 2$ with fraction field $K$. Let $F$ be a one-variable function field over $K$. Then $m(F) \geq 2$, and $m(F) = 2$ if and only if there is a regular model of $F$ whose reduction graph is not a tree.
\end{propintro*}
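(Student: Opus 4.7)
The plan is to prove $m(F) \ge 2$ directly, reformulate ``$m(F) = 2$'' in cohomological terms, apply the local-global principle of \cite{hhk09} to reduce to a local condition, and identify that condition with non-vanishing of $H^1(\Gamma, \mu_2)$ for the reduction graph $\Gamma$. For the bound: a one-dimensional form $\langle a\rangle$ represents exactly $aF^{\times 2}$, so is universal iff $F^\times = F^{\times 2}$; a uniformizer of $T$ has odd valuation in $F$ and hence is a non-square, giving $m(F) \ge 2$. For the equivalence, an anisotropic binary form $\langle a,b\rangle$ with $d := -ab$ represents $c$ iff $c/a$ is a norm from $F(\sqrt d)/F$, iff $(c, d)_F = 0$ in $\Br(F)$. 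So $\langle a,b\rangle$ is universal iff $(c, d)_F = 0$ for every $c$, equivalently iff $\langle 1, -d, -c\rangle$ is isotropic over $F$ for every $c$; thus $m(F) = 2$ iff some $d \in F^\times \smallsetminus F^{\times 2}$ has this property.

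\emph{Local reduction.} Fix a regular model $\mathscr X$ of $F$ and denote by $\{F_\xi\}$ the associated collection of overfields (completions at generic points $\eta$ of components of the special fiber, at closed points $P$, and at branches $\wp$). By the local-global principle of \cite{hhk09} for isotropy of quadratic forms of dimension $\ge 3$, $\langle 1, -d, -c\rangle$ is isotropic over $F$ iff it is isotropic over each $F_\xi$. Each $F_\xi$ is a one- or two-dimensional complete local field of residue characteristic $\ne 2$, over which the Hilbert symbol pairing modulo squares has trivial radical, so ``$(c, d)_{F_\xi} = 0$ for every $c$'' forces $d$ to be a square in $F_\xi$. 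Using that a square in $F_\eta$ or $F_P$ remains a square in $F_\wp$, this simplifies to: $m(F) = 2$ iff the kernel of
\[
F^\times / F^{\times 2} \longrightarrow \prod_\eta F_\eta^\times / F_\eta^{\times 2} \oplus \prod_P F_P^\times / F_P^{\times 2}
\]
is non-trivial.

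\emph{Graph identification and main obstacle.} A \v Cech / Mayer--Vietoris argument based on the patching exact sequence for $\Gm$ of \cite{hh10} identifies the above kernel with $H^1(\Gamma, \mu_2)$. Concretely, a non-square $d$ which is locally square on each $F_\xi$ admits local square roots $x_\xi$, unique up to sign, whose comparisons $x_\xi/x_\eta \in \mu_2$ on branches form a $1$-cocycle on $\Gamma$; the class is non-trivial precisely because $d$ is not a global square, and patching realizes every cocycle as coming from some such $d$. Since $H^1(\Gamma, \mu_2)$ is non-zero iff $\Gamma$ has an independent cycle iff $\Gamma$ is not a tree, the proposition follows. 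The main obstacle is this last step --- cleanly executing the Mayer--Vietoris in the patching framework and identifying the connecting map with $H^1(\Gamma, \mu_2)$. A secondary technical point is verifying non-degeneracy of the Hilbert symbol at each $F_\xi$, which may require care when residue fields are small.
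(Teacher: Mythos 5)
Your route — reformulate $m(F)=2$ as nontriviality of the kernel of $F^\times/F^{\times 2}\to\prod_\xi F_\xi^\times/F_\xi^{\times 2}$, then identify that kernel with $H^1(\Gamma,\mu_2)$ — is genuinely different in flavor from the paper's, but as written it has two substantive gaps and one smaller flaw.

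The smaller flaw first: the claim that ``a uniformizer of $T$ has odd valuation in $F$'' is not true in general. For instance, if $F = K(\sqrt{t})(x)$, then $t$ is a square in $F$. The paper instead proves $m(F)\ge 2$ in Lemma~\ref{m at least 2 for fg trdeg 1} by showing that a quadratically closed $F$ would force $k(t)(\sqrt{-1})$ to be quadratically closed, via \cite[Corollary~VIII.5.11]{lam}. The more serious gap is your assertion that each $F_\xi$ has trivial Kaplansky radical (``the Hilbert symbol pairing modulo squares has trivial radical''), which you need for the direction $m(F)=2 \Rightarrow \Gamma$ not a tree. For $F_\eta$ this is fine: $m(F_\eta)=2m(\kappa(\eta))\ge 4$ by Corollary~\ref{m-inv of cdvf} and Lemma~\ref{m at least 2 for fg trdeg 1}, so a universal binary form over $F_\eta$ must be isotropic. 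But for $F_P$ at a closed point, the claim is $m(F_P) > 2$ for the fraction field of a two-dimensional complete regular local ring, and that is a real assertion requiring proof — not a matter of ``small residue fields''. It is in fact true (by unique factorization an element $d$ of the radical has even valuation at every height-one prime, since $m(F_{P,v})\ge 4$, so $d=uc^2$ with $u$ a unit, and then Hensel's lemma over a prime $v$ with $\widehat{R}_P/v$ regular forces $u$ to be a square), but the paper deliberately avoids proving this for a general binary form by passing to a normal crossings $q$-model adapted to the given form $q$, so that $q|_{F_P}$ has the constrained shape to which \cite[Lemma~9.9]{hhk15} applies; see Lemma~\ref{forms of a prescribed form} and Corollary~\ref{m of F_P for P closed}. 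Finally, your ``graph identification'' step is precisely the statement of \cite[Corollary~9.7]{hhk15} (the local-global principle for rank-two forms fails if and only if some element is locally a square everywhere but not globally, if and only if $\Gamma$ is not a tree). The paper cites this directly, in both directions: the failure gives a locally-universal, globally-anisotropic binary form, hence $m(F)\le 2$ via Lemma~\ref{everywhere locally universal implies globally universal}; conversely a global anisotropic universal binary form becomes isotropic over every $F_P$ (Corollary~\ref{m of F_P for P closed} plus the $m(F_\eta)\ge 4$ bound), so it is a counterexample to the rank-two local-global principle. You should cite \cite[Corollary~9.7]{hhk15} rather than re-derive it via a Mayer--Vietoris argument, which you acknowledge you have not carried out.
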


Inspired by the strong $u$-invariant of a field $k$, $u_s(k)$, defined in \cite{hhk09}, in Section \ref{strong m} we define the \textit{strong $m$-invariant} of $k$, $m_s(k)$, (see Definition \ref{strong m-inv}) and prove
\begin{theoremintro*}[\ref{strong m of cdvf}]
Let $K$ be a complete discretely valued field whose residue field $k$ has characteristic~$\ne 2$. If $m_s(k) = u_s(k)$, then $m_s(K) = 2m_s(k)$.
\end{theoremintro*}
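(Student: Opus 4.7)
The plan is to prove the two inequalities $m_s(K) \leq 2 m_s(k)$ and $m_s(K) \geq 2 m_s(k)$ separately; the upper bound is essentially automatic, while the lower bound contains the substance. For the upper bound, since $m(L) \leq u(L)$ for every field $L$, the definitions of the strong invariants give $m_s \leq u_s$. Combining the Harbater--Hartmann--Krashen result $u_s(K) = 2 u_s(k)$ from \cite{hhk09} with the standing hypothesis $m_s(k) = u_s(k)$, one chains
\[
m_s(K) \leq u_s(K) = 2 u_s(k) = 2 m_s(k).
\]

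For the lower bound, I would use the definition of $m_s(k)$ as a supremum: for each $\epsilon > 0$, either (a) some finite extension $k'/k$ satisfies $m(k') \geq m_s(k) - \epsilon$, or (b) some finitely generated transcendence-degree-one extension $E/k$ satisfies $m(E) \geq 2 m_s(k) - \epsilon$. In case (a) I take $K'/K$ to be the unramified extension with residue field $k'$; since $K'$ is a complete discretely valued field of residue characteristic $\ne 2$, Springer's decomposition writes every anisotropic form over $K'$ as $\tilde q_1 \perp \pi \tilde q_2$ with $q_1, q_2$ anisotropic over $k'$, and such a form is universal if and only if both $q_i$ are. Consequently $m(K') \geq 2 m(k') \geq 2 m_s(k) - 2\epsilon$, and letting $\epsilon \to 0$ gives $m_s(K) \geq 2 m_s(k)$ in this case.

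In case (b), I would realize $E$ as a finite extension of $k(C)$ for a smooth projective curve $C/k$, lift to a regular projective flat model $\hat X \to \Spec O_K$ in which $C$ appears as a component of the closed fiber, and take $F$ to be the function field of the generic fiber of $\hat X$; this is a semi-global field over $K$, hence a finitely generated transcendence-degree-one extension of $K$. The completion $F_v$ of $F$ at the divisorial valuation coming from $C$ is itself a complete discretely valued field with residue field $E$, so case (a) applied one level up gives $m(F_v) = 2 m(E)$. The main obstacle is to transfer this lower bound from $F_v$ down to $F$, i.e., to show $m(F) \geq 2 m(E)$. My strategy is to suppose for contradiction that $q$ is an anisotropic universal form over $F$ with $\dim q < 2 m(E) = m(F_v)$; over $F_v$ such a $q$ must then be either isotropic, or anisotropic and non-universal. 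The latter is ruled out by weak approximation, since every square class in $F_v^\times / F_v^{\times 2}$ has a representative in $F^\times$, which $q$ represents by universality over $F$. To rule out the former, I would invoke the local-global principle for isotropy developed earlier in the paper, applied to $q$ together with a carefully chosen set of overfields, using the hypothesis $m_s(k) = u_s(k)$ to control the $m$-invariants at the other patches and force anisotropy at $F_v$. This yields the desired contradiction, giving $m(F) \geq 2 m(E) \geq 4 m_s(k) - 2\epsilon$, and hence $m_s(K) \geq m(F)/2 \geq 2 m_s(k) - \epsilon$, which completes the lower bound upon letting $\epsilon \to 0$.
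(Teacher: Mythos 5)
Your upper bound is fine and matches the paper: $m_s(K) \le u_s(K) = 2u_s(k) = 2m_s(k)$. The lower bound, however, rests on a misreading of Definition~\ref{strong m-inv}, and the strategy you propose cannot be repaired without starting over.

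The strong $m$-invariant $m_s(k)$ is \emph{not} a supremum over extensions of $m$-invariants: it is the largest integer $n$ such that $m(k') \ge n$ and $m(k'(t)) \ge 2n$ hold simultaneously for \emph{every} finite extension $k'/k$. It is a uniform lower bound, the exact opposite of $u_s$. (The paper remarks on this inversion explicitly, and also on the second key difference you have blurred over: in $m_s$ the transcendence-degree-one condition is restricted to rational function fields $k'(t)$, not arbitrary such extensions --- Example~\ref{Q_p for m_s} shows why this restriction is essential.) Consequently, to prove $m_s(K) \ge 2m_s(k)$, you must show that \emph{every} finite extension $K'/K$ satisfies $m(K') \ge 2m_s(k)$ and $m(K'(x)) \ge 4m_s(k)$. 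Exhibiting one well-chosen extension of $K$ with large $m$-invariant, which is what your ``case (a)'' and ``case (b)'' both do, establishes nothing about $m_s(K)$. Your case (a) also only treats the unramified extension with residue field $k'$, whereas an arbitrary finite $K'/K$ is ramified in general (its residue field is still a finite extension of $k$, which is what the paper's Proposition~\ref{lower bound on strong m} exploits via Corollary~\ref{m-inv of cdvf}). And ``$\epsilon \to 0$'' is out of place since these invariants are integers.

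The substantive part of the lower bound, which the paper carries out in Proposition~\ref{lower bound on strong m}, is to take an \emph{arbitrary} anisotropic universal form $q$ over $F = K'(x)$, pass to a normal crossings $q$-model $\mathscr{X} \to \P^1_S$, locate a point $P_*$ of the closed fiber where $q$ remains anisotropic and universal (Lemma~\ref{universal over F implies universal over F_P} plus \cite[Theorem~9.3]{hhk15}), and then bound $\dim q$ from below using $m(F_{P_*})$: for closed $P_*$ this is Corollary~\ref{m of F_P for P closed}, and for generic $P_*$ the key fact is that the residue field is of the form $k''(t)$ because $\mathscr{X}$ is a blow-up of $\P^1_S$, so the second clause in the definition of $m_s(k)$ applies. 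Your sketch, read charitably, anticipates some of the machinery (passing to $F_v$ and trying to push lower bounds from a local field down to $F$), but it is pointed in the wrong logical direction and would need to be reorganized into a ``take $q$ arbitrary and bound $\dim q$'' argument to match what the definition actually demands.
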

Theorem \ref{strong m of cdvf} is analogous to \cite[Theorem~4.10]{hhk09}, which states that if $K$ is a complete discretely valued field with residue field $k$ of characteristic $\ne 2$, then $u_s(K) = 2u_s(k)$. 

In Section \ref{all universal forms} we study the set $\au(F)$ of dimensions of anisotropic universal quadratic forms over a semi-global field $F$. The main result of this section is the following (see Section \ref{all universal forms} for notation and terminology).
\begin{theoremintro*}[\ref{au equals union}]
Let $T$ be a complete discrete valuation ring with residue field $k$ of characteristic $\ne 2$ such that $m_s(k) = u_s(k) < \infty$. Let $\mathscr{X}$ be a regular connected projective $T$-curve whose closed fiber $X$ has irreducible components $X_1, \ldots, X_s$. For $1 \leq i \leq s$ let $\eta_i$ be the generic point of~$X_i$, let~$\Gamma$ be the reduction graph of $\mathscr{X}$, and let $F$ be the function field of $\mathscr{X}$. Then
\[
	\emph{AU}(F) = \begin{cases}
		\{2\} \cup \bigcup_{i = 1}^s \left\{r_1 + r_2 \mid r_1, r_2 \in \emph{AU}(\kappa(\eta_i))\right\} &\text{ if $\Gamma$ is not a tree}, \\
		 \bigcup_{i = 1}^s \left\{r_1 + r_2 \mid r_1, r_2 \in \emph{AU}(\kappa(\eta_i))\right\} &\text{ if $\Gamma$ is a tree}.
	\end{cases}
\]
\end{theoremintro*}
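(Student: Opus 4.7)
The plan is to prove the two inclusions, using the Springer decomposition of a quadratic form over each completion $F_{\eta_i}$ together with a local-global principle for isotropy over $F$ associated to the model $\mathscr{X}$.

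For $(\supseteq)$, fix $i$ and $r_1, r_2 \in \emph{AU}(\kappa(\eta_i))$. Choose anisotropic universal diagonal forms $\bar q_1, \bar q_2$ over $\kappa(\eta_i)$ of dimensions $r_1, r_2$, and lift their coefficients through the residue surjection $\mathcal{O}_{\mathscr{X}, \eta_i}^{\times} \twoheadrightarrow \kappa(\eta_i)^{\times}$ to diagonal unit-coefficient forms $q_1, q_2$ over $F$. Choose a uniformizer $\pi \in F^{\times}$ at $\eta_i$ and set $q := q_1 \perp \pi q_2$. Springer's theorem makes $q$ anisotropic over $F_{\eta_i}$, hence over $F$. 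For universality, the representation $q(v) = a$ for $a \in F^{\times}$ is equivalent to isotropy of $q \perp \langle -a \rangle$ over $F$, which by the LG principle reduces to isotropy over each overfield in the LG family. At $F_{\eta_i}$ this holds by the CDVF representability criterion, using universality of each $\bar q_j$; at the remaining overfields in the family, the constructed $q$ should be isotropic (a point requiring a careful choice of the lifts $q_1, q_2$ and, if needed, dimension-based arguments in the relevant residue fields). Hence $q$ is anisotropic universal over $F$ and $r_1 + r_2 \in \emph{AU}(F)$. When $\Gamma$ is not a tree, Proposition \ref{m=2 iff loop} supplies $2 \in \emph{AU}(F)$ directly.

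For $(\subseteq)$, let $q$ be anisotropic universal over $F$ of dimension $d$. Proposition \ref{m=2 iff loop} gives $d \geq 2$, and if $d = 2$ then $m(F) = 2$ forces $\Gamma$ to be non-tree, placing $d$ in $\{2\}$. For $d \geq 3$, the LG principle (applied to a family containing the $F_{\eta_i}$) produces $i$ with $q$ anisotropic over $F_{\eta_i}$. The Springer decomposition $q \cong q_1 \perp \pi q_2$ over $F_{\eta_i}$ has $\bar q_1, \bar q_2$ anisotropic over $\kappa(\eta_i)$, so $d = \dim \bar q_1 + \dim \bar q_2$. To verify universality of each $\bar q_j$, given $\bar a \in \kappa(\eta_i)^{\times}$ lift to a unit $a \in F^{\times}$ at $\eta_i$; since $q$ represents $a$ over $F$, a primitivity analysis at the $\eta_i$-adic valuation shows $\bar q_1(\bar v_1) = \bar a$, and applying the same analysis to $\pi a$ shows $\bar q_2$ represents $\bar a$. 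Hence $\dim \bar q_j \in \emph{AU}(\kappa(\eta_i))$.

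The principal obstacle is applying the local-global principle in the correct form on both sides: in $(\supseteq)$, to deduce universality of the constructed $q$ over $F$ (requiring control over the behaviour of $q$ at overfields other than $F_{\eta_i}$), and in $(\subseteq)$ for $d \geq 3$, to extract anisotropy at a completion at a generic point of a component rather than merely at a closed point or branch. The hypothesis $m_s(k) = u_s(k) < \infty$, together with Theorem \ref{strong m of cdvf} and the patching framework of \cite{hhk15}, is exactly what makes both applications of the LG principle work for the dimensions that arise.
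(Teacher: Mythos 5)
Your overall architecture — Springer decomposition at the $F_{\eta_i}$ plus the Harbater--Hartmann--Krashen local-global principle — matches the paper's, but both containments contain gaps that turn out to carry the bulk of the technical work, and you flag them without resolving them.

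For $(\subseteq)$, the step where the argument breaks is the claim that the local-global principle ``produces $i$ with $q$ anisotropic over $F_{\eta_i}$.'' Theorem~9.3 of~\cite{hhk15} only gives some point $P_*$ on the closed fiber of a (necessarily blown-up) normal crossings $q$-model with $q$ anisotropic over $F_{P_*}$; $P_*$ may be a \emph{closed} point, or the generic point of an \emph{exceptional divisor} introduced by the blow-up, and neither of these is an $\eta_i$. The paper (Proposition~\ref{au contained in union}) handles these extra cases by a dimension squeeze: at a closed point Corollary~\ref{m of F_P for P closed} forces $\dim q \ge 4m(\kappa(P_*)) = 2^{n+2} = u(F)$, so $\dim q = 2^{n+2} = 2^{n+1} + 2^{n+1}$ with $2^{n+1} = u(\kappa(\eta_i)) \in \au(\kappa(\eta_i))$; at an exceptional divisor, $\kappa(\xi) \cong k'(t)$ and Corollary~\ref{if strong m = strong u}(b) gives $m(k'(t)) = u(k'(t)) = 2^{n+1}$, with the same conclusion. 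Without this case analysis the containment is not established. You also pass over a quieter issue: one must first pass to a normal crossings $q$-model so that Lemma~\ref{universal over F implies universal over F_P} applies, which is what lets you treat closed points at all.

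For $(\supseteq)$, choosing arbitrary unit lifts $q_1, q_2$ of $\bar q_1, \bar q_2$ through $\mathcal{O}_{\mathscr{X},\eta_i}^{\times} \twoheadrightarrow \kappa(\eta_i)^{\times}$ gives a form $q = q_1 \perp \pi q_2$ whose behaviour at the \emph{other} points of the closed fiber is uncontrolled, and there is no reason for such a lift to be isotropic over $F_P$ for $P \ne \eta_i$. Dimension bounds alone do not save this: $m(F_{\eta_j}) = 2m(\kappa(\eta_j))$ may equal $\dim q$. The actual construction is the content of Proposition~\ref{lifting universal forms on nice closed fiber}, which builds the lift $\widetilde q_1$ so that it is isotropic over $F_P$ for every $P \in X^{\mathrm{red}} \setminus \{\eta_1\}$; that proposition in turn rests on a compatible choice of entries on each component (Proposition~\ref{defining functions on components}, using Lemmas~\ref{lifts with prescribed residue}, \ref{particular isotropic vector}, \ref{represents 1 over residue field}), a cohomological surjectivity statement to lift those entries to global rational functions on $\mathscr{X}$ (Lemma~\ref{surjective from closed fiber} and Corollary~\ref{surjective map on global sections}), and a Hensel-type isotropy criterion at closed points (Lemmas~\ref{element that reduces to uniformizer} and~\ref{Hensel's Lemma for q forms}). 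Your proposal correctly identifies that a ``careful choice of lifts'' is required, but that choice is precisely the mathematical substance of this direction and cannot be deferred.
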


Finally, in Section \ref{classifying curves} we study anisotropic universal quadratic forms over semi-global fields $F$ over an $n$-local field over a field $k$. In particular, we define the \textit{layer} of $F$ (see Definition \ref{layer defin}) in order to compute $m(F)$. For example, we prove the following special cases of Propositions \ref{m-inv of fully arboreal} and~\ref{m-inv if layer = j} (see Section \ref{classifying curves} for notation and terminology). The layer of $F$ depends only on the geometry of a model of $F$, so Propositions \ref{m-inv of fully arboreal} and \ref{m-inv if layer = j} link the arithmetic of $F$ to its geometry.
\begin{propintro*}
Let $k$ be a field of characteristic $\ne 2$ that is either algebraically closed or finite. For any integer $n \geq 1$ let $K$ be an $n$-local field over $k$, and let $F$ be a semi-global field over $K$. If $F$ is not fully arboreal, then $m(F) = 2^{\emph{layer}(F)}$. If $F$ is fully arboreal, then $m(F) = 2^{n+\varepsilon}$, where $\varepsilon = 1$ if~$k$ is algebraically closed and $\varepsilon = 2$ if $k$ is finite.
\end{propintro*}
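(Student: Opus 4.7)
The plan is to deduce the proposition as a direct specialization of Propositions \ref{m-inv of fully arboreal} and \ref{m-inv if layer = j}; the only additional input required is the value of $m_s(K)$ for $K$ an $n$-local field over each of the two permitted types of base field $k$.

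First I would verify that $m_s(k) = u_s(k) < \infty$ in both cases, so that Theorem \ref{strong m of cdvf} is applicable. When $k$ is algebraically closed of characteristic $\ne 2$, every finite extension of $k$ equals $k$, so $u = m = 1$; and for any one-variable function field $E$ over $k$ one has $u(E) = 2$ by Tsen, together with an anisotropic universal binary form produced by the reduction-graph criterion of Section \ref{red graph and m=2} on a suitable regular model. Hence $m_s(k) = u_s(k) = 1$. When $k$ is finite of characteristic $\ne 2$, every finite extension is finite with $u = m = 2$, and any global function field over a finite field has $u = m = 4$ classically; thus $m_s(k) = u_s(k) = 2$.

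Next I would iterate Theorem \ref{strong m of cdvf} up the tower of complete discretely valued fields $k = k_0, k_1, \dots, k_n = K$ expressing $K$ as an $n$-local field over $k$. Theorem \ref{strong m of cdvf} doubles $m_s$ at each step, and \cite[Theorem~4.10]{hhk09} doubles $u_s$ at each step; starting from the equality $m_s(k) = u_s(k)$, this equality is preserved at every stage, so the hypothesis of Theorem \ref{strong m of cdvf} remains valid as we ascend. After $n$ iterations we obtain $m_s(K) = 2^n m_s(k) = u_s(K)$, giving $m_s(K) = 2^n$ when $k$ is algebraically closed and $m_s(K) = 2^{n+1}$ when $k$ is finite, with $m_s(K) = u_s(K) < \infty$ in both cases.

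With this computation in hand, I would invoke Propositions \ref{m-inv of fully arboreal} and \ref{m-inv if layer = j}. In the fully arboreal case, the former proposition yields $m(F) = 2 m_s(K)$, which evaluates to $2^{n+1}$ when $k$ is algebraically closed and to $2^{n+2}$ when $k$ is finite, matching the claim with $\varepsilon = 1$ and $\varepsilon = 2$ respectively. In the non-fully-arboreal case, the latter proposition gives $m(F) = 2^{\text{layer}(F)}$ directly. The real difficulty lies in the general Propositions \ref{m-inv of fully arboreal} and \ref{m-inv if layer = j} themselves, which must relate the combinatorial layer invariant of a regular model to the dimension of a minimal anisotropic universal form via the local-global isotropy machinery developed earlier; once those are in place, the deduction above reduces to the bookkeeping on the tower of residue fields carried out in Steps 1 and 2.
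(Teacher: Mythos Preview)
Your approach is the paper's: the introductory proposition is presented there as a direct specialization of Propositions~\ref{m-inv of fully arboreal} and~\ref{m-inv if layer = j}, and the only work is to check their hypotheses for the two base fields $k$. However, your execution contains a detour and a misidentification of the relevant hypothesis.

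Propositions~\ref{m-inv of fully arboreal} and~\ref{m-inv if layer = j} require \emph{two} conditions on $k$: that $m_s(k) = u_s(k) < \infty$, and that $m(L) = u(L) = 2u_s(k)$ for \emph{every} finitely generated transcendence-degree-one extension $L/k$. The second condition is strictly stronger than the first; the paper notes explicitly (Example~\ref{Q_p for m_s}) that $\Q_p$ satisfies the first but not the second. Your Step~1 does touch on this second condition, but Step~2---iterating Theorem~\ref{strong m of cdvf} to compute $m_s(K)$---is irrelevant: the propositions are already stated for an $n$-local field $K$ over $k$ and deliver $m(F) = 2^{n+1}u_s(k)$ directly. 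Worse, the second hypothesis does \emph{not} propagate up the tower (it fails already for $K_1$), so the propositions cannot be applied with any intermediate field as base; your formulation ``$m(F) = 2m_s(K)$'' suggests you may be thinking of doing this, which would be a genuine error rather than a harmless rephrasing.

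One minor correction in Step~1: for $k$ algebraically closed, the reason $m(E) \geq 2$ for a one-variable function field $E/k$ is Lemma~\ref{m at least 2 for fg trdeg 1}, not the reduction-graph criterion of Section~\ref{red graph and m=2}, which applies only to semi-global fields over a complete discretely valued field and has no meaning for function fields over $k$ itself.
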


\section{Background and Notation}
\label{notation section}
All of the fields considered will have characteristic different from 2, and all quadratic forms (occasionally referred to just as forms) considered will be nondegenerate (or \textit{regular}). Our notation and terminology follows \cite{lam}, and we assume familiarity with basic notions of quadratic form theory (see \cite[Chapter I]{lam}).

Let $q$ be an $n$-dimensional quadratic form over a field $k$. Because the field $k$ has characteristic~$\ne 2$, we can diagonalize $q$ over $k$ and write $q \simeq \langle a_1, \ldots, a_n \rangle$ with each $a_i \in k^{\times}$. Here the symbol $\simeq$ denotes isometry of quadratic forms over $k$. We call the elements $a_1, \ldots, a_n \in k^{\times}$ the \textit{entries} of $q$. For any $a \in k^{\times}$, we let $a \cdot q$ denote the quadratic form $\langle a \rangle \otimes q$. The set of elements of $k^{\times}$ represented by $q$ over $k$ will be denoted by $D_k(q)$. Throughout this manuscript we will repeatedly use two elementary results that give necessary and sufficient conditions for an element $a \in k^{\times}$ to belong to $D_k(q)$. The first is the Representation Criterion \cite[Theorem~I.2.3]{lam}, which states that $a \in D_k(q)$ if and only if there is a quadratic form $q'$ over~$k$ such that $q \simeq \langle a \rangle \perp q'$. The second is the First Representation Theorem \cite[Corollary~I.3.5]{lam}, which states that $a \in D_k(q)$ if and only if $q \perp \langle -a \rangle$ is isotropic. The First Representation Theorem is particularly useful since it allows us to use results about isotropy of quadratic forms to study universality.

A valuable tool when studying isotropy of quadratic forms over a field $k$ is the local-global principle for isotropy with respect to some set of overfields. Given a quadratic form $q$ over~$k$ and a collection $\{k_i\}_{i \in I}$ of overfields of $k$ (i.e., $k \subseteq k_i$ for all $i \in I$), we say that $q$ \textit{satisfies the local-global principle for isotropy with respect to $\{k_i\}_{i \in I}$} if $q$ being isotropic over $k_i$ for all $i \in I$ implies that~$q$ is isotropic over $k$. Whether the local-global principle for isotropy holds with respect to various sets of overfields has been extensively studied (see, e.g., \cite{cas23, cps12, hhk15, hu12, gup}). 

In this paper, we will encounter several different collections of overfields, one of which being the set of $v$-adic completions $k_v$ of $k$ with respect to various non-trivial discrete valuations $v$ on $k$. In this context, we will frequently use Springer's Theorem on quadratic forms over complete discretely valued fields \cite[Proposition~VI.1.9]{lam}, which we will refer to as Springer's Theorem. Springer's Theorem states that, over a complete discretely valued field $K$ with uniformizer $\pi$, valuation ring~$\mathcal{O}$, and residue field $\kappa$ with $\Char \kappa \ne 2$, a quadratic form $q \simeq q_1 \perp \pi \cdot q_2$ (where the entries of $q_1, q_2$ are all units in $\mathcal{O})$ is anisotropic over $K$ if and only if both residue forms $\overline{q}_1, \overline{q}_2$ are anisotropic over~$\kappa$. 

\section{Preliminaries}
\label{preliminaries}
Let $k$ be a field of characteristic $\ne 2$ with finite $u$-invariant. Then any quadratic form $q$ over $k$ with $\dim q = u(k)$ must be universal over $k$, and any quadratic form over $k$ of dimension $> u(k)$ is isotropic. So if $u(k) < \infty$ we have
\[
	u(k) = \max\{\dim q \geq 1 \mid q \text{ an anisotropic universal quadratic form over $k$}\}.
\]
This then naturally leads to the problem of determining the minimal dimension of an anisotropic universal quadratic form over $k$, which is precisely the definition of the $m$-invariant of $k$.
\begin{defin}[\cite{m-inv}]
\label{m-inv}
Let $k$ be a field. The \textit{$m$-invariant} of $k$ is defined by
\[
	m(k) = \min\{\dim q \geq 1 \mid q \text{ an anisotropic universal quadratic form over $k$}\}.
\]
If there are no anisotropic universal quadratic forms over $k$ we say that $m(k) = \infty$.
\end{defin}
\begin{examples}
\begin{enumerate}[label=(\arabic*)]
	\item $m(\R) = \infty$,
	
	\item $m(\C) = 1$,
	
	\item $m(\F_p) = 2$ for any odd prime $p$.
\end{enumerate}
\end{examples}
We now record some preliminary results about the $m$-invariant. Immediately from its definition we see that $1 \leq m(k) \leq u(k)$ for any field $k$ of characteristic $\ne 2$. Moreover, there are certain integers that can never be the $m$-invariant of a field $k$. Indeed, for any field $k$ of characteristic $\ne 2$, $m(k) \ne 3, 5$ \cite[1.1a),~p. 194]{m-inv}, and if $k$ is a \textit{linked} field (see, e.g, \cite[p.~370]{lam} for the definition of a linked field), then $m(k) \ne 7$ \cite[1.1b),~p. 195]{m-inv}. These statements are similar to \cite[Proposition~XI.6.8]{lam}, which states that, for any field $k$ with $\Char k \ne 2$, $u(k) \ne 3, 5,$ or $7$.

We can make the statement $m(k) \leq u(k)$ more precise. Indeed, $m(k)$ and $u(k)$ are always separated by a power of 2. 
\begin{prop}
\label{separated by power of 2}
Let $k$ be a field of characteristic $\ne 2$, and let $n \geq 0$ be the largest integer such that $2^n \leq u(k)$. Then $m(k) \leq 2^n \leq u(k)$. 
\end{prop}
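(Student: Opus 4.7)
The inequality $2^n \leq u(k)$ is immediate from the definition of $n$, so the content lies in proving $m(k) \leq 2^n$. The existence of a maximal integer $n$ with $2^n \leq u(k)$ forces $u(k) < 2^{n+1}$; in particular $u(k)$ is finite, so there is an anisotropic form $\varphi$ over $k$ of dimension exactly $u(k)$. I would first record that any such $\varphi$ is automatically universal: for any $a \in k^\times$, the form $\varphi \perp \langle -a \rangle$ has dimension $u(k)+1 > u(k)$, hence is isotropic, and the First Representation Theorem yields $a \in D_k(\varphi)$. This already gives $m(k) \leq u(k)$ and completes the argument when $u(k) = 2^n$.

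For the remaining case $2^n < u(k) < 2^{n+1}$, my plan is a dimension-reduction argument. The key lemma to establish is the following: any anisotropic universal form $\psi$ over $k$ of dimension $d$ with $2^n < d < 2^{n+1}$ admits an anisotropic universal subform of strictly smaller dimension. Granted this, iterating the reduction starting from the form $\varphi$ of dimension $u(k)$ produces an anisotropic universal form of dimension at most $2^n$, hence $m(k) \leq 2^n$.

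To attack the lemma, I would use that $\psi$, being universal, represents $1$, so one can write $\psi \simeq \langle 1 \rangle \perp \psi'$ with $\psi'$ anisotropic of dimension $d-1$, and ask whether $\psi'$ (for a suitable choice of decomposition) is itself universal. For any $b \in k^\times \setminus D_k(\psi')$, the relation $b = y^2 + \psi'(v)$ coming from universality of $\psi$ forces $y \neq 0$, coupling $b$ tightly to the arithmetic of $D_k(\psi')$ and $k^{\times 2}$. I expect the main obstacle of the entire proof to be ruling out the scenario in which every codimension-one subform of $\psi$ fails to be universal. My plan to handle this is to exploit the roundness of anisotropic Pfister forms (so that $D_k(\pi)$ is a subgroup of $k^\times$ for any anisotropic Pfister form $\pi$) together with the numerical constraint $d < 2^{n+1}$, which restricts the structure of $\psi$ in the Witt ring enough---via the Arason--Pfister Hauptsatz, for instance---to force the existence of a universal codimension-one subform. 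Once the lemma is in place, iterating it finishes the proof.
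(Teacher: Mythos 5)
Your first paragraph is fine: any anisotropic form of dimension $u(k)$ is universal, so $m(k)\leq u(k)$, and this settles the case $u(k)=2^n$ (in particular $u(k)=1$). But the rest is a plan, not a proof. The dimension-reduction lemma you single out---every anisotropic universal form of dimension $d$ with $2^n<d<2^{n+1}$ has a proper anisotropic universal subform---is exactly where all the difficulty lies, and your final paragraph does not establish it. Roundness of Pfister forms does not help as invoked: a codimension-one subform $\psi'$ of $\psi$ is not a Pfister form and has no reason to be round, so $D_k(\psi')$ need not be a group. The Arason--Pfister Hauptsatz constrains anisotropic forms lying in a power $I^n$ of the fundamental ideal, but neither $\psi$ nor its subforms need lie in any particular power of $I$, so it gives you no structural control here. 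The observation that $b=y^2+\psi'(v)$ forces $y\neq 0$ is recorded but never used. As written, the scenario you flag as ``the main obstacle''---every codimension-one subform failing to be universal---is simply not ruled out, and I am not convinced the lemma is even true.

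For comparison, the paper just cites \cite[Prop.~1.3 and Cor.~1.6]{m-inv}, and the argument there is a one-step construction, not an iteration on subforms. One first produces an anisotropic $n$-fold Pfister form $\pi$ over $k$; this uses $u(k)\geq 2^n$ and is a nontrivial but classical step (e.g., one shows that $I^n(k)=0$ would force $u(k)<2^n$). Then the bound $u(k)<2^{n+1}$ makes $\pi\perp -a\cdot\pi$ isotropic for every $a\in k^\times$ since it has dimension $2^{n+1}$; anisotropy of $\pi$ yields $x,y$ with $\pi(y)\neq 0$ and $\pi(x)=a\,\pi(y)$, and roundness of the Pfister form $\pi$ (legitimately applied here) gives $a\in D_k(\pi)$. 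Thus $\pi$ is anisotropic, universal, and of dimension exactly $2^n$, so $m(k)\leq 2^n$ directly. If you want a self-contained proof, this is the argument to supply; your subform-reduction scheme would need a genuinely new idea to close the gap.
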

\begin{proof}
If $u(k) = 1$, then $n = 0$ and $m(k) = 1 \leq 2^0$ \cite[Proposition~1.3]{m-inv}. If $u(k) \geq 2$, then this is precisely the statement of \cite[Corollary~1.6]{m-inv}.
\end{proof}
As an immediate consequence of Proposition \ref{separated by power of 2}, we have
\begin{cor}
\label{if m=u}
If $k$ is a field of characteristic $\ne 2$ with $m(k) = u(k) < \infty$, then $m(k) = u(k) = 2^n$ for some integer $n \geq 0$.
\end{cor}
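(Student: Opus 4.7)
The plan is a direct squeeze argument built entirely on Proposition \ref{separated by power of 2}. First I would note that, since $m(k) \geq 1$ by definition and $m(k) = u(k) < \infty$ by hypothesis, we have $1 \leq u(k) < \infty$, so the integer
\[
    n := \max\{\,j \geq 0 \mid 2^j \leq u(k)\,\}
\]
is well-defined. I would then cite Proposition \ref{separated by power of 2} to obtain the chain of inequalities
\[
    m(k) \leq 2^n \leq u(k).
\]

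Finally, the hypothesis $m(k) = u(k)$ forces both inequalities in this chain to be equalities, and hence $u(k) = m(k) = 2^n$, which is exactly what we wanted. There is essentially no obstacle here: all the content lies in Proposition \ref{separated by power of 2}, and the corollary merely extracts the case where the lower and upper bounds on the interval $[m(k), u(k)]$ coincide, squeezing the intermediate power of $2$ onto both endpoints. The only bookkeeping point to be careful about is ensuring the defining set for $n$ is nonempty, which is handled by the observation $u(k) \geq 1$.
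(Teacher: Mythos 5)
Your proposal is correct and follows exactly the same route as the paper: the corollary is stated there as an immediate consequence of Proposition \ref{separated by power of 2}, and your squeeze argument (defining $n$ as the largest exponent with $2^n \leq u(k)$, then collapsing $m(k) \leq 2^n \leq u(k)$ under the hypothesis $m(k) = u(k)$) is precisely the intended reasoning.
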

\begin{remark}
In \cite{m-inv} it was observed that determining when $m(k) = 2$ can be related to studying the \textit{Kaplansky radical} of $k$, denoted $R(k)$, defined by Kaplansky in \cite{kap}. By \cite[Proposition~XII.6.1]{lam}, $R(k)$ consists of the elements $a \in k^{\times}$ such that $\langle 1, -a \rangle$ is universal over $k$. By definition, $R(k) \subseteq k^{\times}$, and because isotropic quadratic forms are universal, we have $k^{\times 2} \subseteq R(k)$. By \cite[1.2, p. 195]{m-inv}, $m(k) = 2$ if and only if $k^{\times 2} \subset R(k) \subset k^{\times}$, where both inclusions are strict. The Kaplansky radical was also studied in, e.g., \cite{becher-leep}, where Becher and Leep called a field $k$ \textit{radical-free} if $R(k) = k^{\times 2}$. If $k^{\times} \ne k^{\times 2}$, then $k$ is radical-free if and only if $m(k) > 2$.
\end{remark}
Much like the $u$-invariant, computing the $m$-invariant of a given field is a challenging problem. For some simple fields like $\C, \F_p$, and $\R$, we know exact values of the $m$-invariant, each of which agrees with the respective $u$-invariant. However, there are fields $k$ with $m(k) < u(k)$. For example, in \cite[Example~2.10]{m-inv}, for any positive integer $n \geq 2$, Gesqui\`{e}re and Van Geel constructed a field $k$ with $m(k) = 2$ and $u(k) = 2n$, and in \cite[Proposition~4.3]{hoff} Hoffmann constructed a field~$k$ with $m(k) = 6$, which by Proposition \ref{separated by power of 2} must have $m(k) < u(k)$. We will see other examples of such fields as we continue (see, e.g., Remark \ref{sg fields with m<u}).

For fields $k$ with $m(k) = u(k) < \infty$, the only possible dimension of an anisotropic universal quadratic form over $k$ is this common value $m(k) = u(k)$. However, if $m(k) < u(k)$, then there may be other such dimensions. With this in mind, we define the set $\au(k)$ by
\[
	\au(k) = \left\{\dim q \mid q \text{ an anisotropic universal quadratic form over $k$}\right\}.
\]
From the above observations, for a field $k$ with $u(k) < \infty$, we see that $\au(k)$ is non-empty, with $m(k), u(k) \in \au(k)$.

Throughout this manuscript, we will use the local-global principle for isotropy with respect to various sets of overfields to compute both lower bounds and exact values of the $m$-invariant for certain fields $k$, as well as to compute the set $\au(k)$. As a part of this process, it is beneficial to collect several results about universal quadratic forms over complete discretely valued fields.
\begin{lemma}
\label{anisotropic universal residue forms}
Let $K$ be a complete discretely valued field with residue field $k$ of characteristic $\ne 2$. Let $q$ be a quadratic form over $K$, and for a uniformizer $\pi$ of $K$, write $q \simeq q_1 \perp \pi \cdot q_2$, where the entries of $q_1$ and $q_2$ are units in the valuation ring $\mathcal{O}_K$ of $K$. Then $q$ is anisotropic and universal over~$K$ if and only if both residue forms $\overline{q}_1, \overline{q}_2$ have positive dimension and are anisotropic and universal over $k$.
\end{lemma}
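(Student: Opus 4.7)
The plan is to prove both the anisotropy and universality equivalences by combining Springer's Theorem with the First Representation Theorem, handled case by case according to the parity of the valuation of each element of $K^\times$.

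For the anisotropy half, Springer's Theorem immediately gives that $q \simeq q_1 \perp \pi \cdot q_2$ is anisotropic over $K$ if and only if $\overline{q}_1$ and $\overline{q}_2$ are both anisotropic over $k$, so this direction requires no further argument. For the universality half, I would use the First Representation Theorem to translate ``$q$ represents $a$ over $K$'' into ``$q \perp \langle -a \rangle$ is isotropic over $K$,'' and then split into the two cases $a = u$ and $a = \pi u$ with $u \in \mathcal{O}_K^\times$. In the first case, $q \perp \langle -a \rangle \simeq (q_1 \perp \langle -u \rangle) \perp \pi \cdot q_2$, so Springer's Theorem makes isotropy over $K$ equivalent to the statement that $\overline{q}_1 \perp \langle -\overline{u} \rangle$ or $\overline{q}_2$ is isotropic over $k$; in the second case, $q \perp \langle -a \rangle \simeq q_1 \perp \pi \cdot (q_2 \perp \langle -u \rangle)$ and the roles of $\overline{q}_1$ and $\overline{q}_2$ are swapped.

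Using these translations, the backward direction is easy: if both residue forms are anisotropic and universal of positive dimension, then for any $a \in K^\times$ the appropriate residue form represents the unit part of $a$, so the corresponding isotropy condition holds and $q$ represents $a$. For the forward direction, I would first rule out $\dim q_1 = 0$ by observing that then $a = 1$ forces $q \perp \langle -1 \rangle \simeq \pi \cdot q_2 \perp \langle -1 \rangle$ to have two anisotropic residue forms (since $\overline{q}_2$ is anisotropic by Springer and $\langle -1 \rangle$ is trivially so), contradicting universality; a symmetric argument with $a = \pi$ rules out $\dim q_2 = 0$. Once positive dimension is secured, for any $b \in k^\times$ I lift $b$ to a unit $u$ and use universality of $q$ over $K$: the case $a = u$ combined with anisotropy of $\overline{q}_2$ forces $\overline{q}_1$ to represent $b$, and the case $a = \pi u$ combined with anisotropy of $\overline{q}_1$ forces $\overline{q}_2$ to represent $b$.

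The proof is essentially mechanical, so there is no serious obstacle; the only point requiring care is the bookkeeping that ensures each parity of the valuation of $a$ is matched with the correct residue form, and the separate verification that neither $q_1$ nor $q_2$ can be the empty form. Both of these amount to applying Springer's Theorem to the right expansion of $q \perp \langle -a \rangle$.
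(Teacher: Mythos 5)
Your proof is correct and follows essentially the same route as the paper's: both use Springer's Theorem for anisotropy and the First Representation Theorem to reduce universality to the isotropy of $q \perp \langle -a\rangle$, with the case split on the parity of $v(a)$ and a contradiction argument to rule out a trivial residue form. The only cosmetic difference is that you make explicit the step of invoking the anisotropy of one residue form to force the isotropy onto the other (e.g., $\overline{q}_2$ anisotropic forces $\overline{q}_1 \perp \langle -\overline{u}\rangle$ isotropic), which the paper leaves implicit.
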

\begin{proof}
First assume that both residue forms $\overline{q}_1$ and $\overline{q}_2$ are anisotropic and universal over $k$ with $\dim \overline{q}_1, \dim \overline{q}_2 > 0$. Then $q$ is anisotropic over $K$ by Springer's Theorem. Moreover, since $\overline{q}_1$ and~$\overline{q}_2$ are universal over $k$, the First Representation Theorem and Springer's Theorem imply that the forms $q \perp \langle -u \rangle$ and $q \perp \langle -\pi u\rangle$ are isotropic over $K$ for all $u \in \mathcal{O}_K^{\times}$. Given any $a \in K^{\times}$, we conclude that $q \perp \langle -a \rangle$ is isotropic over $K$ since there is some $u \in \mathcal{O}_K^{\times}$ (depending on $a$) such that $q \perp \langle -a \rangle$ is isometric to either $q \perp \langle -u \rangle$ or $q \perp \langle -\pi u \rangle$. By the First Representation Theorem we conclude that~$q$ is universal over $K$.

Conversely, suppose $q$ is anisotropic and universal over $K$. By Springer's Theorem, both residue forms $\overline{q}_1, \overline{q}_2$ must be anisotropic over $k$, and we now show that $\dim \overline{q}_1, \dim \overline{q}_2 > 0$. By contradiction, suppose not and, without loss of generality, assume $\dim \overline{q}_1 = 0$, i.e., $q \simeq \pi \cdot q_2$. Then by Springer's Theorem the form $q \perp \langle -u \rangle$ is anisotropic over $K$ for all $u \in \mathcal{O}_K^{\times}$. This contradicts our assumption that $q$ is universal over $K$, hence $\dim \overline{q}_1, \dim \overline{q}_2 > 0$. Finally, we show that both residue forms are universal over $k$. Let $\overline{u} \in k^{\times}$ be arbitrary and let $u \in \mathcal{O}_K^{\times}$ be a unit lift of~$\overline{u}$ to~$K$. Since~$q$ is universal over $K$, both $q \perp \langle -u \rangle$ and $q \perp \langle -\pi u \rangle$ are isotropic over $K$ by the First Representation Theorem. So by Springer's Theorem, $\overline{q}_1 \perp \langle -\overline{u} \rangle$ and $\overline{q}_2 \perp \langle -\overline{u} \rangle$ are isotropic over~$k$. Since $\overline{u} \in k^{\times}$ was arbitrary, the First Representation Theorem implies that both $\overline{q}_1$ and~$\overline{q}_2$ are universal over~$k$.
\end{proof}

As an immediate corollary of Lemma \ref{anisotropic universal residue forms}, for a complete discretely valued field $K$ with residue field $k$, the set $\au(K)$ is completely determined by $\au(k)$.
\begin{cor}
\label{au of cdvf}
Let $K$ be a complete discretely valued field with residue field $k$ of characteristic~$\ne 2$. Then $\emph{AU}(K) = \left\{r_1 + r_2 \mid r_1, r_2 \in \emph{AU}(k) \right\}$.
\end{cor}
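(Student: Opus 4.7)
The plan is to deduce both inclusions directly from Lemma~\ref{anisotropic universal residue forms}, with essentially no extra work. Fix a uniformizer $\pi$ of $K$ throughout.

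For the inclusion $\au(K) \subseteq \{r_1 + r_2 \mid r_1, r_2 \in \au(k)\}$, I would start with an arbitrary $n \in \au(K)$, choose an anisotropic universal form $q$ over $K$ of dimension $n$, and write $q \simeq q_1 \perp \pi \cdot q_2$ with all entries of $q_1, q_2$ in $\mathcal{O}_K^\times$ (possible after absorbing squares into each diagonal entry). Applying Lemma~\ref{anisotropic universal residue forms} then immediately yields that $\bar{q}_1, \bar{q}_2$ are anisotropic and universal over $k$ with $r_i := \dim \bar{q}_i > 0$, so $n = r_1 + r_2$ with $r_1, r_2 \in \au(k)$.

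For the reverse inclusion, given $r_1, r_2 \in \au(k)$, I would pick anisotropic universal forms $\varphi_1, \varphi_2$ over $k$ of those dimensions, diagonalize each as $\varphi_i \simeq \langle \bar{u}_{i,1}, \ldots, \bar{u}_{i,r_i} \rangle$ with $\bar{u}_{i,j} \in k^\times$, choose arbitrary unit lifts $u_{i,j} \in \mathcal{O}_K^\times$, and set $q_i := \langle u_{i,1}, \ldots, u_{i,r_i}\rangle$ over $K$, so that $\bar{q}_i = \varphi_i$ has positive dimension $r_i$. The other direction of Lemma~\ref{anisotropic universal residue forms} then shows that $q := q_1 \perp \pi \cdot q_2$ is anisotropic and universal over $K$ with $\dim q = r_1 + r_2$, so $r_1 + r_2 \in \au(K)$.

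I do not anticipate any serious obstacle: the corollary truly is ``immediate'' in the sense that all the substantive content is already packaged inside Lemma~\ref{anisotropic universal residue forms}. The only minor step beyond quoting the lemma is the unit-lift argument for the reverse direction, which is routine because the reduction map $\mathcal{O}_K^\times \to k^\times$ is surjective.
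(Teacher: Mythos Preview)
Your proposal is correct and is essentially identical to the paper's own proof: both directions are obtained directly from Lemma~\ref{anisotropic universal residue forms}, with the forward inclusion coming from decomposing an anisotropic universal form $q \simeq q_1 \perp \pi \cdot q_2$ and the reverse inclusion coming from lifting entries of anisotropic universal forms over $k$ to units in $\mathcal{O}_K$. The only cosmetic difference is that you spell out the unit-lift step slightly more explicitly than the paper does.
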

\begin{proof}
Let $q$ be any anisotropic universal quadratic form over $K$, and write $q \simeq q_1 \perp \pi \cdot q_2$, where~$\pi$ is a uniformizer of $K$ and all entries of $q_1, q_2$ are units in the valuation ring of $K$. Then by Lemma~\ref{anisotropic universal residue forms}, both residue forms $\overline{q}_1, \overline{q}_2$ must be anisotropic and universal over $k$. Therefore $\dim \overline{q}_1, \dim \overline{q}_2 \in \au(k)$. Since $q$ was arbitrary, this shows that $\au(K) \subseteq \left\{r_1 + r_2 \mid r_1, r_2 \in \au(k) \right\}$.

To show the reverse containment, let $r_1, r_2 \in \au(k)$ be given, and let $\overline{\varphi}_1, \overline{\varphi}_2$ be anisotropic universal quadratic forms over $k$ of dimensions $r_1, r_2$, respectively. Then for lifts $\varphi_1, \varphi_2$ of $\overline{\varphi}_1, \overline{\varphi}_2$ to $K$, the $(r_1 + r_2)$-dimensional quadratic form $\varphi$ defined over $K$ by $\varphi = \varphi_1 \perp \pi \cdot \varphi_2$ is anisotropic and universal over~$K$ by Lemma \ref{anisotropic universal residue forms}. This completes the proof.
\end{proof}

Corollary \ref{au of cdvf} then implies the following result (see also \cite[Lemma~2.1]{m-inv}).
\begin{cor}
\label{m-inv of cdvf}
Let $K$ be a complete discretely valued field with residue field $k$ of characteristic~$\ne 2$. Then $m(K) = 2m(k)$.
\end{cor}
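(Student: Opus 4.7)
The plan is to deduce this statement almost immediately from Corollary \ref{au of cdvf}, since $m$ is just the minimum of $\au$ (with the convention that the minimum of the empty set is $\infty$).

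First I would handle the degenerate case. If $m(k) = \infty$, then by definition there are no anisotropic universal quadratic forms over $k$, i.e., $\au(k) = \emptyset$. By Corollary \ref{au of cdvf}, $\au(K) = \{r_1 + r_2 \mid r_1, r_2 \in \au(k)\} = \emptyset$ as well, so $m(K) = \infty = 2m(k)$ under the natural convention $2 \cdot \infty = \infty$.

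Next, assume $m(k) < \infty$, so that $\au(k)$ is non-empty and $m(k) = \min \au(k)$. By Corollary \ref{au of cdvf},
\[
	m(K) = \min \au(K) = \min\{r_1 + r_2 \mid r_1, r_2 \in \au(k)\}.
\]
Since every element of $\au(k)$ is at least $m(k)$, any sum $r_1 + r_2$ with $r_i \in \au(k)$ satisfies $r_1 + r_2 \geq 2m(k)$, and this lower bound is attained by choosing $r_1 = r_2 = m(k)$. Therefore $m(K) = 2m(k)$.

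There is essentially no obstacle here: all of the work has already been done in Lemma \ref{anisotropic universal residue forms} and Corollary \ref{au of cdvf}, and this corollary is just a translation of the set-level statement to its minimum. The only thing one should remember to mention is the $m(k) = \infty$ case, which is not covered directly by Corollary \ref{au of cdvf} as stated but is immediate from the proof of the forward containment in that corollary.
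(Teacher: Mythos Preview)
Your proof is correct and follows essentially the same approach as the paper: the paper's proof is the one-line observation that $m(F)$ is the smallest element of $\au(F)$, applied to Corollary~\ref{au of cdvf}. Your version is simply more detailed, explicitly spelling out the minimization and treating the case $m(k)=\infty$ separately; note, though, that this case \emph{is} already covered by Corollary~\ref{au of cdvf} as stated, since the set identity $\au(K)=\{r_1+r_2\mid r_1,r_2\in\au(k)\}$ remains valid (and both sides are empty) when $\au(k)=\varnothing$.
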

\begin{proof}
This follows immediately from Corollary \ref{au of cdvf} since the $m$-invariant of a field $F$ is the smallest element of $\au(F)$.
\end{proof}
\begin{lemma}
\label{globally universal implies locally universal}
Let $k$ be any field of characteristic $\ne 2$, and let $v$ be any non-trivial discrete valuation on $k$. If a quadratic form $q$ over $k$ is universal over $k$, then $q$ is universal over its completion $k_v$.
\end{lemma}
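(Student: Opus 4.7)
The plan is to reduce the problem to representing a single element of $k^\times$ by $q$ over $k$, using that scaling by a square preserves the set of represented values. More precisely, given $a \in k_v^{\times}$, I would like to find $b \in k^{\times}$ and $s \in k_v^{\times}$ such that $a = b s^2$; then since $q$ is universal over $k$, there is some $x$ (with coordinates in $k$) with $q(x) = b$, and consequently $q(sx) = s^2 q(x) = s^2 b = a$ in $k_v$.

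The key ingredient is therefore the identity $k^{\times} \cdot (k_v^{\times})^2 = k_v^{\times}$. I would establish this by the following standard Hensel-type argument. Since $v$ is discrete and $\Char k \neq 2$, Hensel's lemma applied to $X^2 - u$ shows that $1 + \pi^N \mathcal{O}_v \subseteq (k_v^{\times})^2$ for all sufficiently large $N$ (where $\pi \in k$ is a uniformizer and $\mathcal{O}_v$ is the valuation ring of $k_v$). Given $a \in k_v^{\times}$, write $a = \pi^n u$ with $u \in \mathcal{O}_v^{\times}$. By density of $k$ in $k_v$, I can pick $u_0 \in k^{\times}$ with $u/u_0 \in 1 + \pi^N \mathcal{O}_v$, so $u/u_0 = s^2$ for some $s \in k_v^{\times}$. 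Setting $b = \pi^n u_0 \in k^{\times}$ gives $a = b s^2$ as desired.

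Putting these together yields the lemma in a single short paragraph. The only step that requires any care is the density/Hensel argument producing $b$ and $s$; the rest is a one-line scaling. I do not anticipate a real obstacle here — this is a folklore fact about squares in complete discretely valued fields of residue characteristic $\neq 2$ — so the main task is simply to present the argument cleanly and confirm that everything remains non-zero (which is automatic once $b$ is taken close enough to $a \neq 0$).
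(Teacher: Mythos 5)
Your proof is correct and is essentially the same as the paper's: both rest on the fact that every $a \in k_v^{\times}$ has a representative $b \in k^{\times}$ in its $k_v$-square class (you derive this via Hensel's lemma applied to $X^2 - u$; the paper invokes density and Krasner's Lemma, which is the same fact), and then transfer representability from $k$ to $k_v$. The only cosmetic difference is the final step, where you scale a representing vector directly while the paper phrases it through the First Representation Theorem as isotropy of $q \perp \langle -b_v \rangle$.
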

\begin{proof}
To show that $q$ is universal over $k_v$, it suffices to show that $q \perp \langle -b_v \rangle$ is isotropic over~$k_v$ for all $b_v \in k_v^{\times}$. To that end, let $b_v \in k_v^{\times}$ be arbitrary. Because $k$ is dense in $k_v$, by an application of Krasner's Lemma \cite[Proposition~7.61]{milne} we can find an element $b \in k^{\times}$ close enough to $b_v$ in~$k_v$ to ensure that $b$ and $b_v$ belong to the same square class of $k_v$. That is, in $k_v$, $b / b_v$ is a square.

Since $q$ is universal over $k$, the form $q \perp \langle -b \rangle$ is isotropic over $k$. Then over $k_v$, we have $q \perp \langle -b_v \rangle \simeq q \perp \langle -b \rangle$. Thus $q \perp \langle -b_v \rangle$ is isotropic over $k_v$, proving the lemma.
\end{proof}

To conclude this section, we prove a lemma that shows how the local-global principle for isotropy can be used to study the $m$-invariant. This lemma will be used throughout the manuscript.
\begin{lemma}
\label{everywhere locally universal implies globally universal}
Let $k$ be a field of characteristic $\ne 2$, and let $\{k_i\}_{i \in I}$ be a set of overfields of $k$. Suppose there exists an integer $n \geq 1$ such that all $(n+1)$-dimensional quadratic forms over $k$ satisfy the local-global principle for isotropy with respect to $\{k_i\}_{i \in I}$. Let $q$ be an $n$-dimensional quadratic form over $k$ that is universal (e.g., isotropic) over $k_i$ for all $i \in I$. Then $q$ is universal over $k$. If in addition $q$ is anisotropic over $k$, then $m(k) \leq n$.
\end{lemma}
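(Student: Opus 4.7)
The plan is to translate universality of $q$ over $k$ into isotropy of an auxiliary form, so the local-global hypothesis on $(n+1)$-dimensional forms can be applied directly. By the First Representation Theorem, $q$ is universal over $k$ if and only if $q \perp \langle -a \rangle$ is isotropic over $k$ for every $a \in k^\times$. So I would fix an arbitrary $a \in k^\times$ and analyze the $(n+1)$-dimensional form $q \perp \langle -a \rangle$.

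The next step is to verify that this form is isotropic over every $k_i$. Since $k \subseteq k_i$ and $a \in k^\times \subseteq k_i^\times$, and since $q$ is universal over $k_i$ by assumption, the element $a$ lies in $D_{k_i}(q)$. Applying the First Representation Theorem over $k_i$ yields that $q \perp \langle -a \rangle$ is isotropic over $k_i$ for every $i \in I$.

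Now I would invoke the hypothesis: every $(n+1)$-dimensional quadratic form over $k$ satisfies the local-global principle for isotropy with respect to $\{k_i\}_{i \in I}$. Since $q \perp \langle -a \rangle$ has dimension $n+1$ and is isotropic over every $k_i$, it must be isotropic over $k$. By the First Representation Theorem over $k$, this means $a \in D_k(q)$. As $a \in k^\times$ was arbitrary, $q$ is universal over $k$.

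For the final clause, if $q$ is also anisotropic over $k$, then $q$ is an anisotropic universal quadratic form over $k$ of dimension $n$, so by the definition of the $m$-invariant we have $m(k) \leq n$. No step looks like a genuine obstacle here; the argument is essentially a two-line application of the First Representation Theorem in both directions, sandwiched around the local-global hypothesis. The only subtle point is the dimension bookkeeping, namely that universality of an $n$-dimensional form is controlled by isotropy of a form of dimension $n+1$, which is exactly what the hypothesis is designed to handle.
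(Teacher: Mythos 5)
Your proof is correct and follows essentially the same route as the paper's: fix $a\in k^\times$, observe that universality of $q$ over each $k_i$ makes $q\perp\langle -a\rangle$ isotropic over each $k_i$, apply the local-global hypothesis to conclude it is isotropic over $k$, and invoke the First Representation Theorem; the $m$-invariant bound is then immediate. The only difference is that you are slightly more explicit than the paper in citing the First Representation Theorem in both directions, but this is the same argument.
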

\begin{proof}
The second statement follows immediately from the first by the definition of $m(k)$, so it suffices to prove the first statement. Let $a \in k^{\times}$ be arbitrary and consider the $(n+1)$-dimensional quadratic form $q \perp \langle -a \rangle$ over $k$. Since $q$ is universal over $k_i$ for all $i \in I$, it follows that $q \perp \langle -a \rangle$ is isotropic over $k_i$ for all $i \in I$. This, by assumption, implies that $q \perp \langle -a \rangle$ is isotropic over $k$. Since $a \in k^{\times}$ was arbitrary, $q$ must be universal over $k$.
\end{proof}

\section{The reduction graph and the $m$-invariant}
\label{red graph and m=2}
In this section, we will be particularly interested in universal quadratic forms over \textit{semi-global fields}. Recall that a semi-global field is a one-variable function field $F$ over a complete discretely valued field $K$. Such fields have been extensively studied in, e.g., \cite{cps12, hh10, weier, hhk15}. For such fields $F$, the local-global principle for isotropy with respect to particular sets of overfields has been used to calculate $u(F)$ (see, e.g., \cite{cps12, hhk09, refine}). We will use the local-global principle for isotropy to study anisotropic universal quadratic forms over semi-global fields.

We begin by giving a lower bound on the $m$-invariant of any finitely generated field extension of transcendence degree one. The proof is similar to that of \cite[Proposition~3.4]{becher-leep}.
\begin{lemma}
\label{m at least 2 for fg trdeg 1}
Let $k$ be any field of characteristic $\ne 2$, and let $L$ be any finitely generated field extension of transcendence degree one over $k$. Then $m(L) \geq 2$.
\end{lemma}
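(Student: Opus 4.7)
The plan is to show that no one-dimensional quadratic form over $L$ can be universal, since a one-dimensional form is automatically anisotropic. A form $\langle a \rangle$ over $L$ is universal if and only if $a \cdot L^{\times 2} = L^\times$, which happens precisely when $L^\times = L^{\times 2}$. So the lemma reduces to producing a non-square element in $L$.

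To produce such an element, I would first use the hypothesis that $L$ is a finitely generated transcendence degree one extension of $k$ to exhibit a non-trivial discrete valuation $v$ on $L$. Concretely, choose any $t \in L$ transcendental over $k$; then $L$ is a finite extension of $k(t)$, and any place of $k(t)/k$ (for instance, the $t$-adic place) admits an extension to a discrete valuation $v$ on $L$ by the standard theory of algebraic function fields of one variable. Equivalently, one can invoke the existence of a normal projective model of $L$ over $k$ and take the valuation associated to any closed point.

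Now pick a uniformizer $\pi \in L^\times$ for $v$, so $v(\pi) = 1$. If $\pi$ were a square, say $\pi = x^2$ with $x \in L^\times$, then we would have $1 = v(\pi) = 2 v(x)$, which is impossible in $\Z$. Hence $\pi \notin L^{\times 2}$, so $L^\times \ne L^{\times 2}$. Consequently, for any $a \in L^\times$, the form $\langle a \rangle$ fails to represent at least one square class of $L$, so it is not universal. Since no one-dimensional form over $L$ is universal, any anisotropic universal form over $L$ has dimension at least $2$, giving $m(L) \geq 2$.

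The only subtlety is the existence of the discrete valuation, which is routine from the theory of function fields in one variable; once this is in hand, the rest is just the observation that uniformizers have odd valuation and therefore cannot be squares. There is no real obstacle here — the argument is essentially the same as the one that shows the function field of any curve over any field has non-square elements.
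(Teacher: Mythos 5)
Your proof is correct, and it takes a genuinely different (and somewhat more elementary) route than the paper's. Both arguments reduce to showing that $L$ is not quadratically closed, i.e., $L^\times \neq L^{\times 2}$. The paper does this indirectly: it invokes \cite[Proposition~1.3]{m-inv} to pass from $m(L)=1$ to quadratic closure, then argues that if $L$ were quadratically closed then $k(t)(\sqrt{-1}) \subseteq L$, and by \cite[Corollary~VIII.5.11]{lam} the subfield $k(t)(\sqrt{-1}) \cong k(\sqrt{-1})(t)$ would be quadratically closed as well, which fails since $t$ is not a square there. You instead argue directly from the definition of a universal one-dimensional form (so you don't need the cited proposition on $m=1 \Leftrightarrow u=1$), and produce a non-square in $L$ by exhibiting a non-trivial discrete valuation $v$ on $L$ (by extending, say, the $t$-adic place of $k(t)$ to the finite extension $L$) and observing that a uniformizer has odd valuation and hence cannot be a square. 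This is self-contained and avoids the appeal to Lam's corollary on quadratic closures; the only external fact you use is the existence of discrete valuations on one-variable function fields, which is standard. One small presentational point: when you say the universality of $\langle a \rangle$ forces $L^\times = L^{\times 2}$, it is worth spelling out that $D_L(\langle a\rangle) = aL^{\times 2}$ is a single coset, and equality with $L^\times$ forces $L^\times/L^{\times 2}$ to be trivial; you have the idea, and this is just a matter of making the coset count explicit.
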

\begin{proof}
By \cite[Proposition~1.3]{m-inv}, $m(L) = 1$ if and only if $u(L) = 1$, which holds if and only if~$L$ is quadratically closed, i.e., every non-zero element of $L$ is a square. Therefore, to prove the lemma, it suffices to show that $L$ is not quadratically closed. 

First consider the field $k(t)\left(\sqrt{-1}\right) \cong k\left(\sqrt{-1}\right)(t)$. This field is not quadratically closed since~$t$ is not a square in $k(t)\left(\sqrt{-1}\right)$. Now, since $L$ is a finitely generated extension of transcendence degree one over $k$, we can write $L$ as a finite extension of $k(t)$. If $L$ is quadratically closed, then $k(t)\left(\sqrt{-1}\right) \subseteq L$. By \cite[Corollary~VIII.5.11]{lam}, $L$ being quadratically closed implies that $k(t)\left(\sqrt{-1}\right)$ is quadratically closed, which we just showed is not the case. So $L$ is not quadratically closed, completing the proof.
\end{proof}
As an immediate consequence of Lemma \ref{m at least 2 for fg trdeg 1}, we have
\begin{cor}
\label{m at least 2 for sg field}
Let $F$ be a semi-global field over a complete discretely valued field $K$ of residue characteristic $\ne 2$. Then $m(F) \geq 2$.
\end{cor}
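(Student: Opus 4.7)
The plan is to derive this corollary as a direct specialization of Lemma \ref{m at least 2 for fg trdeg 1} with $k$ replaced by the complete discretely valued base field $K$ and $L$ replaced by the semi-global field $F$. Before invoking that lemma I just need to verify the two hypotheses it requires: that the base field has characteristic $\ne 2$, and that $F$ is a finitely generated extension of transcendence degree one over it.

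For the first hypothesis, $K$ is complete with respect to a discrete valuation whose residue field $\kappa$ has characteristic $\ne 2$. Since $\Char K$ either equals $0$ or equals $\Char \kappa$, we conclude $\Char K \ne 2$ in either case. For the second hypothesis, the very definition of a semi-global field adopted in this paper (a one-variable function field over a complete discretely valued field) means that $F$ is finitely generated of transcendence degree one over $K$.

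With both hypotheses in hand, Lemma \ref{m at least 2 for fg trdeg 1} applied to the extension $K \subseteq F$ yields $m(F) \geq 2$, which is the desired conclusion. There is no genuine obstacle here; the corollary is labeled as an immediate consequence precisely because all the work has already been done in Lemma \ref{m at least 2 for fg trdeg 1}, and the only content of the proof is to observe that the residue characteristic hypothesis on $K$ transfers to a characteristic hypothesis on $K$ itself.
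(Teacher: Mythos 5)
Your proof is correct and matches the paper's approach exactly: the paper also derives this corollary as an immediate consequence of Lemma \ref{m at least 2 for fg trdeg 1} applied with base field $K$. Your explicit check that $\Char K \ne 2$ (via the residue characteristic) is a helpful bit of diligence that the paper leaves implicit.
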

We now recall notation and terminology from the patching framework developed by Harbater and Hartmann (see \cite[Section~6]{hh10}).
\begin{notation}
\label{sg fields notation}
Let $T$ be a complete discrete valuation ring with uniformizer $t$, residue field $k$, and fraction field $K$. Let $F$ be a one-variable function field over $K$, and let $\mathscr{X}$ be a \textit{normal model} of~$F$ over $T$, i.e., a normal connected projective $T$-curve with function field $F$. Let $X$ denote the closed fiber of~$\mathscr{X}$. For each point $P \in X$ (not necessarily closed), let $R_P$ denote the local ring of $\mathscr{X}$ at $P$, let $\widehat{R}_P$ denote the completion of $R_P$ with respect to its maximal ideal, and let $F_P$ be the fraction field of $\widehat{R}_P$. A \textit{branch} of $X$ at a closed point $P$ is a height one prime $\wp$ of $\widehat{R}_P$ that contains $t$.
\end{notation}

In fact, in the situation of Notation \ref{sg fields notation}, by \cite{abh, lip}, there are \textit{regular models} $\mathscr{X}$ of~$F$, i.e.,~$\mathscr{X}$ is a normal model of $F$ that is regular as a $T$-curve. In \cite[Section~6]{hhk15}, Harbater, Hartmann, and Krashen defined a bipartite graph associated to the closed fiber $X$ of a regular model~$\mathscr{X}$ of $F$ called the \textit{reduction graph} of $\mathscr{X}$, and showed that the isomorphism class of this graph does not depend on the choice of regular model \cite[Corollary~7.8]{hhk15}. Harbater, Hartmann, and Krashen then showed that, for various algebraic objects over $F$, the validity of certain local-global principles depends on whether the reduction graph of a regular model of $F$ is a tree. The local-global principle we will focus on is the local-global principle for isotropy of quadratic forms.
\begin{theorem}[Harbater-Hartmann-Krashen]
Let $T$ be a complete discrete valuation ring of residue characteristic $\ne 2$ with fraction field $K$. Let $F$ be a one-variable function field over $K$, and let $\mathscr{X}$ be a normal model of $F$ with closed fiber $X$. Let $q$ be a quadratic form of dimension at least $3$ over $F$. Then
\[
	q \text{ is isotropic over $F$ if and only if $q$ is isotropic over $F_P$ for all $P \in X$}.
\]
Moreover, two-dimensional quadratic forms over $F$ satisfy this local-global principle for isotropy if and only if the reduction graph of a regular model of $F$ is a tree.
\end{theorem}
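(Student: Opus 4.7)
The forward direction in both parts is immediate: if $q$ is isotropic over $F$ then it is isotropic over every overfield, in particular over each $F_P$. The substance is in the converse, and in the reduction-graph criterion for two-dimensional forms. My plan is to handle the dimension $\geq 3$ case first via the Harbater--Hartmann patching machinery, then analyze dimension $2$ cohomologically.

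For dimension $\geq 3$: Fix a regular model $\mathscr{X}$ of $F$. Choose a finite set $\mathcal{P}$ of closed points of $X$ containing the singular points of $X_{\mathrm{red}}$ and the intersection points of distinct components, let $\mathcal{U}$ be the set of connected components of $X \setminus \mathcal{P}$, and let $\mathcal{B}$ be the set of branches at points of $\mathcal{P}$. The fields $\{F_U\}_{U \in \mathcal{U}}$, $\{F_P\}_{P \in \mathcal{P}}$, together with the branch fields $\{F_\wp\}_{\wp \in \mathcal{B}}$, form an HH patching system. The key input is the local-global principle for rational points on projective homogeneous $F$-varieties under connected rational linear algebraic groups: such a variety $Z$ has an $F$-point iff $Z(F_U)\neq\emptyset$ for every $U\in\mathcal{U}$ and $Z(F_P)\neq\emptyset$ for every $P\in\mathcal{P}$. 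Applied to the projective quadric $Q=\{q=0\}$, which is homogeneous under the connected rational group $\SO(q)$ and has dimension $\geq 1$ since $\dim q \geq 3$, this yields the LGP over the patches. To bootstrap from patches to the stronger statement over all $P\in X$, note that each $F_U$ embeds into $F_{\eta_i}$ where $\eta_i$ is the generic point of the component containing $U$, so isotropy over every $F_P$ (including generic points) implies isotropy over each $F_U$; combined with isotropy over every $F_P$ at closed $P\in\mathcal{P}$, the patching LGP gives isotropy over $F$.

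For dimension $2$: A form $\langle a,-b\rangle$ is isotropic over a field $L$ iff $b/a\in L^{\times 2}$, so the LGP in this case is equivalent to asking whether the natural map $F^\times/F^{\times 2}\to\prod_{P\in X}F_P^\times/F_P^{\times 2}$ is injective. Using the Mayer--Vietoris style exact sequence attached to the patching datum with coefficients in $\mu_2$, the kernel of this map is identified with $H^1(\Gamma,\mathbb{Z}/2)$, where $\Gamma$ is the reduction graph of $\mathscr{X}$ (this is where \cite[\S 6]{hhk15} enters, and is independent of the choice of regular model by \cite[Corollary 7.8]{hhk15}). This first cohomology vanishes precisely when $\Gamma$ is a tree, which gives the ``if'' direction. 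For ``only if'', given a cycle in $\Gamma$ one builds an explicit class in $H^1(\Gamma,\mathbb{Z}/2)$, lifts it to an element $b\in F^\times\setminus F^{\times 2}$ that becomes a square in every $F_P$, and then $\langle 1,-b\rangle$ is a two-dimensional counterexample to LGP.

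The main obstacle is the second paragraph: verifying cleanly that the patching LGP (stated for a finite set $\mathcal{P}$ of closed points and the corresponding components $\mathcal{U}$) upgrades to the apparently stronger LGP with respect to the full set $\{F_P : P\in X\}$, and pinning down the relevant projective-homogeneous-variety LGP in the generality needed. The dimension $2$ case is conceptually delicate but essentially formal once one has the Mayer--Vietoris description of square classes in terms of $\Gamma$.
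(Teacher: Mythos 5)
The paper's own ``proof'' of this theorem is simply a citation to \cite[Theorem~9.3]{hhk15} and \cite[Corollary~9.7]{hhk15}, whereas you attempt to reconstruct the argument from the patching machinery. That is a reasonable exercise, but there is a genuine gap in your bootstrapping step for the dimension $\geq 3$ case.

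You write that ``each $F_U$ embeds into $F_{\eta_i}$ where $\eta_i$ is the generic point of the component containing $U$, so isotropy over every $F_P$ (including generic points) implies isotropy over each $F_U$.'' The inclusion $F_U \subseteq F_{\eta_i}$ is correct: since $\eta_i$ lies in every nonempty open $U$ of $X_i$, one has $R_U \subseteq \mathcal{O}_{\mathscr{X},\eta_i}$ and hence $F_U \subseteq F_{\eta_i}$ after completion. But the implication you draw from it runs the wrong way. Isotropy passes from a \emph{smaller} field to a \emph{larger} one: an isotropic vector over $F_U$ gives one over $F_{\eta_i}$, not conversely. Knowing that $q$ is isotropic over $F_{\eta_i}$ gives you no control over whether an isotropic vector can be found with coordinates in the subfield $F_U$. (Compare: $\mathbb{Q}$ is dense in $\mathbb{Q}_7$ and $x^2 - 2$ has a root in $\mathbb{Q}_7$ but not in $\mathbb{Q}$; density of $F$ in $F_{\eta_i}$, which is what makes $F_U$ ``close'' to $F_{\eta_i}$, does not by itself transport isotropy downward.) So your chain from ``isotropic over $\{F_P\}_{P\in X}$'' to ``isotropic over the patching system $\{F_U\}\cup\{F_P\}_{P\in\mathcal{P}}$'' does not go through as stated, and this is precisely where the content of upgrading from the finite patching LGP to the $\{F_P\}_{P\in X}$ LGP lives. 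In \cite{hhk15} this comparison is handled by an argument specific to the geometry of the patches and is not a formal consequence of the field inclusions; your sketch elides exactly that step. The dimension~$2$ outline (reduce to injectivity of $F^{\times}/F^{\times 2}\to\prod_P F_P^{\times}/F_P^{\times 2}$, identify the obstruction with $H^1(\Gamma,\mathbb{Z}/2)$, and note that this vanishes iff $\Gamma$ is a tree) does follow the spirit of \cite{hhk15}, but it silently invokes the same unverified passage between the patching index set and the full point set $X$, so the same issue would need to be addressed there before the argument is complete.
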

\begin{proof}
See \cite[Theorem~9.3]{hhk15} for the statement about quadratic forms of dimension $\geq 3$, and \cite[Corollary~9.7]{hhk15} for the statement about two-dimensional quadratic forms.
\end{proof}
\begin{remark}
In the situation of Notation \ref{sg fields notation}, by \cite[p. 193]{lip} we can always find a regular model~$\mathscr{X}$ of $F$ whose \textit{reduced} closed fiber $X^{\text{red}}$ is a union of regular curves over $k$, with normal crossings. We can identify the points of the closed fiber $X$ of $\mathscr{X}$ with the points of $X^{\text{red}}$, and the fields $F_P$ are the same if we consider the points $P$ as belonging to $X$ or $X^{\text{red}}$. We will focus more on the reduced closed fiber of a regular model of $F$ in Section \ref{all universal forms}. Furthermore, there is another graph associated to such a model $\mathscr{X}$, namely the \textit{dual graph} (as in \cite[p.~86]{deligne}, \cite[Definition~10.3.17]{liu}). In this normal crossings situation, the reduction graph of Harbater, Hartmann, and Krashen (which is defined for more general regular models) is the barycentric subdivision of the dual graph (see \cite[Remark~6.1(a)]{hhk15}).
\end{remark}
We will now show that we can use the reduction graph of a regular model of a semi-global field~$F$ to determine when $m(F) = 2$.
\begin{lemma}
\label{loop implies m=2}
Let $T$ be a complete discrete valuation ring of residue characteristic $\ne 2$ with fraction field $K$. Let $F$ be a one-variable function field over $K$ with a regular model whose reduction graph is not a tree. Then $m(F) = 2$.
\end{lemma}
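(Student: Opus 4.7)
The plan is to combine the Harbater--Hartmann--Krashen local-global principle with Lemma~\ref{everywhere locally universal implies globally universal}. Since Corollary~\ref{m at least 2 for sg field} already gives $m(F) \geq 2$, it suffices to construct a $2$-dimensional anisotropic universal quadratic form over $F$.

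First I would invoke the ``only if'' direction of the second statement of the HHK theorem: because the reduction graph of a regular model of $F$ is not a tree, the local-global principle for isotropy with respect to $\{F_P\}_{P \in X}$ \emph{fails} on some $2$-dimensional form. Concretely, there exist $a \in F^{\times}$ such that $q = \langle 1, -a \rangle$ is anisotropic over $F$ but isotropic over $F_P$ for every $P \in X$. Any regular isotropic form is universal, so $q$ is universal over each $F_P$.

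Next I would upgrade this local universality to global universality via Lemma~\ref{everywhere locally universal implies globally universal}, applied with $n = 2$ and $\{k_i\}_{i \in I} = \{F_P\}_{P \in X}$. The hypothesis of that lemma requires the local-global principle for isotropy to hold for all $3$-dimensional forms over $F$ with respect to $\{F_P\}_{P \in X}$; this is exactly the first statement of the HHK theorem. Therefore $q$ is universal over $F$, and since $q$ is also anisotropic over $F$, the definition of the $m$-invariant gives $m(F) \leq 2$. Combined with $m(F) \geq 2$, we conclude $m(F) = 2$.

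There is no real obstacle here: the argument is essentially bookkeeping, using the failure of the $2$-dimensional LGP to produce an anisotropic binary form and then using the success of the $3$-dimensional LGP to promote its local universality (automatic from local isotropy) to global universality. The only subtlety worth flagging is the asymmetry between the two parts of the HHK theorem — that it is precisely the jump from dimension $2$ (where LGP fails) to dimension $3$ (where LGP holds) that makes the argument work — and this is exactly what the non-tree hypothesis provides.
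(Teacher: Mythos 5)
Your proof is correct and matches the paper's argument almost word for word: both use Corollary~\ref{m at least 2 for sg field} for the lower bound, \cite[Corollary~9.7]{hhk15} to produce a $2$-dimensional anisotropic form that is everywhere locally isotropic (hence locally universal), and then \cite[Theorem~9.3]{hhk15} together with Lemma~\ref{everywhere locally universal implies globally universal} to promote local universality to global universality.
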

\begin{proof}
By Corollary \ref{m at least 2 for sg field} we know that $m(F) \geq 2$. Therefore, to prove the result, it suffices to show that $m(F) \leq 2$. Let $\mathscr{X}$ be a regular model of $F$ whose reduction graph is not a tree, and let $X$ be the closed fiber of $\mathscr{X}$. By \cite[Corollary~9.7]{hhk15} there exists a two-dimensional quadratic form over $F$ that is anisotropic over $F$ but isotropic over $F_P$ for all $P \in X$. However, three-dimensional quadratic forms over $F$ satisfy the local-global principle for isotropy with respect to these overfields \cite[Theorem~9.3]{hhk15}. Thus $m(F) \leq 2$ by Lemma \ref{everywhere locally universal implies globally universal}.
\end{proof}
The next lemma is a rephrasing of Lemma \ref{globally universal implies locally universal} over a semi-global field $F$ in terms of points $P$ on the closed fiber of a particular regular model of $F$. Such a model always exists (see Remark \ref{normal crossings q-models exist}).
\begin{lemma}
\label{universal over F implies universal over F_P}
Let $F$ be a one-variable function field over the fraction field of a complete discrete valuation ring $T$ of residue characteristic $\ne 2$. Let $\mathscr{X}$ be a regular model of $F$ with closed fiber $X$ such that distinct branches at any closed point of $X$ lie on distinct irreducible components of~$X$. Let $q$ be a universal quadratic form over $F$. Then for all points $P \in X$, $q$ is universal over~$F_P$.
\end{lemma}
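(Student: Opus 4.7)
The plan is to adapt the proof of Lemma \ref{globally universal implies locally universal}: given $a \in F_P^\times$, I will produce $b \in F^\times$ with $a/b \in (F_P^\times)^2$. Once such a $b$ is in hand, universality of $q$ over $F$ gives $v \in F^n$ with $q(v) = b$, and then $q(sv) = s^2 b = a$ for any $s \in F_P^\times$ with $s^2 = a/b$. So the crux is the existence of $b$, which amounts to verifying that $F^\times$ hits every square class of $F_P^\times$.

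For $P$ the generic point of an irreducible component of $X$, $R_P$ is a discrete valuation ring and $F_P$ is the completion of $F$ at the associated discrete valuation; here the existence of $b$ is immediate from density of $F$ in $F_P$ combined with Krasner's Lemma, exactly as in the proof of Lemma \ref{globally universal implies locally universal}.

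For $P$ a closed point, $\widehat{R}_P$ is a two-dimensional complete regular local ring and $F_P = \Frac(\widehat{R}_P)$. Since $\widehat{R}_P$ is a UFD, any $a \in F_P^\times$ is, modulo squares, a product of a unit $u \in \widehat{R}_P^\times$ and finitely many distinct prime elements. I would first replace $u$ by an element of $R_P^\times \subset F^\times$ in the same square class: since $R_P$ is dense in $\widehat{R}_P$ in the $\mathfrak{m}_P$-adic topology, a unit lift $r \in R_P^\times$ of the residue of $u$ can be chosen with $u/r \in 1 + \mathfrak{m}_P$, and Hensel's Lemma (using $\Char \kappa(P) \ne 2$) then forces $u/r \in (\widehat{R}_P^\times)^2$. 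For each prime factor, the model hypothesis---distinct branches at $P$ lie on distinct irreducible components of $X$---ensures that primes containing $t$ are generated by local defining equations of the corresponding (regular) components and hence lie in $R_P \subset F^\times$; primes not containing $t$ can be brought into Weierstrass form in a regular parameter of $\widehat{R}_P$ and then matched, modulo $(F_P^\times)^2$, by polynomial expressions drawn from $F$.

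The main obstacle is the closed-point case, and specifically the treatment of horizontal primes of $\widehat{R}_P$ (those not containing $t$). Verifying that each such prime is, modulo squares in $F_P$, equivalent to an element of $F^\times$ is where Weierstrass preparation, Hensel's Lemma, and the normal-crossings hypothesis on the model must be marshalled together; this is the technical heart of the argument and the point at which the full strength of the assumption on $\mathscr{X}$ enters.
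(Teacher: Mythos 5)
Your reduction is exactly the paper's: handle the two kinds of points $P\in X$ separately, deal with generic points via Lemma \ref{globally universal implies locally universal} and Krasner, and for closed points reduce universality over $F_P$ to showing that the map $F^\times \to F_P^\times/(F_P^\times)^2$ is surjective, so that every $b_P\in F_P^\times$ is $b c^2$ with $b\in F^\times$ and $c\in F_P^\times$. Where you part ways is in how that surjectivity is established: the paper does not re-derive it, it simply observes that the hypothesis on branches makes the conditions of \cite[Corollary~3.3(c)]{weier} hold, and that corollary (part of Harbater--Hartmann--Krashen's Weierstrass preparation machinery) gives the surjectivity outright.

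Your from-scratch attempt is fine as far as it goes: the unit case (lift the residue to $R_P^\times$ and then apply Hensel's Lemma to $x^2-u/r$ over the complete local ring $\widehat R_P$, using $\operatorname{char}\kappa(P)\ne 2$) is correct, and the vertical-prime case correctly exploits the branches hypothesis to see that a component's defining equation in $R_P$ remains prime in $\widehat R_P$. But for the horizontal primes of $\widehat R_P$ you explicitly defer to ``Weierstrass preparation, Hensel's Lemma, and the normal-crossings hypothesis'' without carrying it out, and you acknowledge this is the technical heart. That is a genuine gap: showing that a horizontal prime of $\widehat R_P$ lies in the image of $F^\times$ modulo squares is precisely the nontrivial content of the HHK result, and it is not a routine Weierstrass-preparation exercise in this mixed (arithmetic) two-dimensional setting. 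Either cite \cite[Corollary~3.3(c)]{weier} as the paper does, or actually supply the argument (in which case you would essentially be reproving a chunk of \cite{weier}); as written, the proof is incomplete at the one place where the hard work lives.
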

\begin{proof}
There are two types of points $P \in X$: generic points of irreducible components of $X$, and closed points. We consider these types of points separately.

First, suppose that $P \in X$ is the generic point $\eta$ of an irreducible component of $X$. The codimension one point $\eta$ on $\mathscr{X}$ induces a discrete valuation $v_{\eta}$ on $F$, and in this case the field $F_P = F_{\eta}$ is the completion of $F$ with respect to $v_{\eta}$. Therefore, by Lemma \ref{globally universal implies locally universal}, since $q$ is universal over $F$, it is also universal over $F_P$.

Next, suppose that $P \in X$ is a closed point. To show that $q$ is universal over $F_P$, by the First Representation Theorem, it suffices to show that, for all $b_P \in F_P^{\times}$, the form $q \perp \langle -b_P \rangle$ is isotropic over $F_P$. To that end, let $b_P \in F_P^{\times}$ be arbitrary. Because we assumed distinct branches at $P$ lie on distinct irreducible components of $X$, the conditions of \cite[Corollary~3.3(c)]{weier} are automatically satisfied. Therefore there is some $b \in F^{\times}$ and $c \in F_P^{\times}$ such that $b_P = bc^2$. The form~$q$ is universal over $F$, so the form $q \perp \langle -b \rangle$ is isotropic over $F$. Now over $F_P$, we have
\[
	q \perp \langle -b_P \rangle = q \perp \left\langle -bc^2 \right\rangle \simeq q \perp \langle -b \rangle.
\]
Thus $q \perp \langle -b_P \rangle$ is isotropic over $F_P$, completing the proof.
\end{proof}
As we continue to study anisotropic universal quadratic forms $q$ over a semi-global field $F$, we will want to consider particular regular models of $F$ determined by the quadratic form $q$.
\begin{defin}
\label{normal crossings q-model}
Let $T$ be a complete discrete valuation ring of residue characteristic $\ne 2$ with fraction field $K$. Let $F$ be a one-variable function field over $K$, and let $\mathscr{X}$ be a regular model of~$F$. Let $q = \langle a_1, \ldots, a_n \rangle$ be a regular quadratic form over $F$, and let $D$ be the union of the supports of the divisors of the $a_i$ considered as rational functions on $\mathscr{X}$. We call $\mathscr{X}$ a \textit{normal crossings $q$-model} of $F$ if
\begin{enumerate}[label=\arabic*.]
	\item the singularities of $D$ are normal crossings, and
	
	\item distinct branches at any closed point of the closed fiber $X$ of $\mathscr{X}$ lie on distinct irreducible components of $X$.
\end{enumerate}
\end{defin}
\begin{remark}
\label{normal crossings q-models exist}
Given any regular quadratic form $q$ over a semi-global field $F$, we can always find a normal crossings $q$-model of $F$ by taking a suitable blow-up of a given regular model. Indeed, if we start with a regular model $\mathscr{X}''$ of $F$, then by \cite[Lemma~4.7]{hhk09} we can find a blow-up $\mathscr{X}' \to \mathscr{X}''$ so that condition 1 of Definition \ref{normal crossings q-model} is satisfied. By blowing up $\mathscr{X}'$ at points of intersection of its closed fiber (which does not affect the validity of condition 1), we obtain a regular model~$\mathscr{X}$ of~$F$ satisfying conditions 1 and 2 of Definition \ref{normal crossings q-model}. We will frequently see normal crossings $q$-models arise in this way.
\end{remark}
The next lemma will be useful when considering anisotropic universal quadratic forms over the fields $F_P$ for closed points $P$, and is similar to \cite[Lemma~9.9]{hhk15}.
\begin{lemma}
\label{forms of a prescribed form}
Let $R$ be a regular complete local domain of dimension two, whose residue field~$k$ has characteristic $\ne 2$. Let $E$ be the fraction field of $R$, let $\{x, y\}$ be a generating set of the maximal ideal of $R$, and let $E_y$ be the completion of $E$ with respect to the $y$-adic valuation. Let $q = \langle a_1, \ldots, a_n \rangle$ be a regular quadratic form over $E$ such that, for each $i = 1, \ldots, n$, $a_i = u_ix^{r_i}y^{s_i}$ for some integers $r_i, s_i \geq 0$ and some $u_i \in R^{\times}$. Then
\begin{enumerate}[label=(\alph*)]
	\item $m(E_y) = 4m(k)$.
	
	\item if $q$ is anisotropic and universal over $E$, then it is anisotropic and universal over $E_y$.
\end{enumerate}
\end{lemma}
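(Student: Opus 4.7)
For part (a), I would identify $E_y$ as a complete discretely valued field whose residue field is $L := \mathrm{Frac}(R/(y))$. Because $R$ is a two-dimensional regular complete local ring with maximal ideal $(x,y)$, the quotient $R/(y)$ is a one-dimensional regular complete local ring, i.e., a complete discrete valuation ring with uniformizer $\overline{x}$ and residue field $k$. Hence $L$ is itself the fraction field of a complete discretely valued ring with residue field $k$. Applying Corollary \ref{m-inv of cdvf} twice---first to $(E_y, L)$, then to $(L, k)$---gives $m(E_y) = 2m(L) = 4m(k)$, as claimed.

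For part (b), the universality of $q$ over $E_y$ is immediate from Lemma \ref{globally universal implies locally universal}, since $v_y$ is a nontrivial discrete valuation on $E$. The substantive content is anisotropy over $E_y$, and my plan is to combine Hensel's lemma with a two-step application of Springer's theorem. Since multiplying an entry by a square does not change the isometry class, I may first replace each entry $a_i = u_i x^{r_i} y^{s_i}$ by $u_i x^{\epsilon_i} y^{\delta_i}$, where $\epsilon_i \equiv r_i \pmod{2}$ and $\delta_i \equiv s_i \pmod{2}$. Grouping entries by $(\epsilon_i, \delta_i) \in \{0,1\}^2$ then yields a decomposition $q \simeq q_{00} \perp x \cdot q_{10} \perp y \cdot q_{01} \perp xy \cdot q_{11}$, in which each $q_{\epsilon\delta}$ is diagonal with entries in $R^{\times}$.

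The key observation is that each such $q_{\epsilon\delta}$ is a quadratic form with unit entries over the complete (hence Henselian) local ring $R$, with $\mathrm{char}\,k \ne 2$. By the standard Hensel-type lifting of isotropic vectors, $q_{\epsilon\delta}$ is isotropic over $R$ if and only if its reduction $\overline{\overline{q}}_{\epsilon\delta}$ over $k$ is isotropic; moreover, an isotropic vector for any $q_{\epsilon\delta}$ over $R$ extends by zeros to a nontrivial isotropic vector for $q$ over $E$, contradicting the anisotropy of $q$ over $E$. Therefore all four reductions $\overline{\overline{q}}_{\epsilon\delta}$ are anisotropic over $k$. Now I would view $q = (q_{00} \perp x \cdot q_{10}) \perp y \cdot (q_{01} \perp x \cdot q_{11})$ over $E_y$: by Springer's theorem, the anisotropy of $q$ over $E_y$ reduces to the anisotropy over $L$ of the two residue forms $\overline{q}_{00} \perp \overline{x} \cdot \overline{q}_{10}$ and $\overline{q}_{01} \perp \overline{x} \cdot \overline{q}_{11}$. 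A second application of Springer's theorem to these forms---now over the complete DVR $R/(y)$ with uniformizer $\overline{x}$---reduces anisotropy further to that of the four double residues $\overline{\overline{q}}_{\epsilon\delta}$ over $k$, which we have already established.

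The main obstacle I anticipate is packaging the bridge in the right direction: both Springer's theorem and Lemma \ref{anisotropic universal residue forms} naturally pass information from the residue field up to the completion, so a separate argument is needed to see that anisotropy of $q$ over the non-complete field $E$ controls the four double residues. Hensel's lemma applied to each sub-form $q_{\epsilon\delta}$ individually provides exactly this control, and once it is in hand the remainder is a mechanical double application of the one-variable Springer's theorem recorded in Section \ref{notation section}.
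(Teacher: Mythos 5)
Part (a) of your argument is identical to the paper's. For part (b), you diverge meaningfully: the paper simply cites \cite[Lemma~9.9(a)]{hhk15} for the anisotropy of $q$ over $E_y$, whereas you give a self-contained proof using the decomposition $q \simeq q_{00} \perp x\cdot q_{10} \perp y\cdot q_{01} \perp xy\cdot q_{11}$, a Hensel-lifting argument (which in the paper is Lemma \ref{Hensel's Lemma for q forms}) to show that anisotropy of $q$ over $E$ forces all four double residues $\overline{\overline{q}}_{\epsilon\delta}$ to be anisotropic over $k$, and then a double application of Springer's theorem (over $E_y$ and then over $\kappa(y) = \Frac(R/yR)$) to conclude. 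This is correct, and the key point you correctly handle is the direction of the Hensel bridge: you only need the contrapositive (some $\overline{\overline{q}}_{\epsilon\delta}$ isotropic over $k$ $\Rightarrow$ $q_{\epsilon\delta}$, hence $q$, isotropic over $E$), which Lemma \ref{Hensel's Lemma for q forms} supplies; the ``only if'' you mention in passing is unnecessary and would in fact fail in general, so it is good that you do not rely on it. The trade-off is that the paper's approach is shorter but opaque (it outsources the technical core to a reference), while yours is longer but keeps the argument local to the paper and makes transparent the role of the hypothesis that the entries are monomials in $x, y$ times units. One small point worth noting explicitly: you use that $x$ is a unit in the $y$-adic valuation ring, which holds because $\{x,y\}$ is a regular system of parameters, so $x \notin (y)$; this is needed so that $q_{00} \perp x\cdot q_{10}$ genuinely has unit entries for the first application of Springer's theorem.
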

\begin{proof}
(a) The field $E_y$ is a complete discretely valued field whose residue field $\kappa(y)$ is the fraction field of the complete discrete valuation ring $R / yR$. The field $\kappa(y)$ is therefore also a complete discretely valued field, with residue field $k$. So by applying Corollary \ref{m-inv of cdvf} twice we conclude
	\[
		m(E_y) = 2m(\kappa(y)) = 2(2m(k)) = 4m(k).
	\]
	
(b) Since $q$ is universal over $E$, it is universal over $E_y$ by Lemma \ref{globally universal implies locally universal}. Furthermore, $q$ is of the form in the hypothesis of \cite[Lemma~9.9]{hhk15}. So since $q$ is anisotropic over~$E$, by \cite[Lemma~9.9(a)]{hhk15} it must also be anisotropic over $E_y$.
\end{proof}
We will use the following corollary of Lemma \ref{forms of a prescribed form} at various points of this paper.
\begin{cor}
\label{m of F_P for P closed}
Let $T$ be a complete discrete valuation ring of residue characteristic $\ne 2$ with fraction field $K$. Let $F$ be a one-variable function field over $K$ and let $q$ be a regular quadratic form over $F$. Let $\mathscr{X}$ be a normal crossings $q$-model of $F$ with closed fiber $X$, and let $P \in X$ be any closed point. If $q$ is anisotropic and universal over $F_P$, then $\dim q \geq 4m(\kappa(P))$, where $\kappa(P)$ is the residue field of the ring $\widehat{R}_P$.
\end{cor}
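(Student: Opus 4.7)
The plan is to reduce the corollary directly to Lemma \ref{forms of a prescribed form}, using the hypothesis that $\mathscr{X}$ is a normal crossings $q$-model to cast $q$ into the form required by that lemma.

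First, I would set up the local data. Since $P$ is a closed point of the closed fiber $X$ of a regular two-dimensional scheme, the ring $\widehat{R}_P$ is a regular complete local domain of dimension two. Its residue field is $\kappa(P)$, which has characteristic $\ne 2$ because the residue characteristic of $T$ is $\ne 2$ and $\kappa(P)$ is a finite extension of the residue field $k$ of $T$. Moreover, by definition, $F_P$ is the fraction field of $\widehat{R}_P$, so the setup of Lemma \ref{forms of a prescribed form} is available with $R = \widehat{R}_P$ and $E = F_P$.

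Second, I would use the normal crossings $q$-model hypothesis to choose suitable regular parameters. Write $q \simeq \langle a_1, \ldots, a_n \rangle$. Because the divisor $D$ on $\mathscr{X}$ formed from the supports of the divisors of the $a_i$ has normal crossings, and condition 2 of Definition \ref{normal crossings q-model} ensures that at most two branches of $X$ meet at $P$ (and they lie on distinct components), we may choose generators $\{x,y\}$ of the maximal ideal of $\widehat{R}_P$ so that the locus of each $a_i$ inside $\widehat{R}_P$ is supported on $V(x) \cup V(y)$. Since $\widehat{R}_P$ is a regular local ring, hence a UFD, this means each $a_i$ can be written as $u_i x^{r_i} y^{s_i}$ with $u_i \in \widehat{R}_P^{\times}$ and $r_i, s_i \geq 0$, which is precisely the hypothesis of Lemma \ref{forms of a prescribed form}.

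Third, I would apply Lemma \ref{forms of a prescribed form}(b): since $q$ is anisotropic and universal over $F_P$, it is also anisotropic and universal over the $y$-adic completion $(F_P)_y$. By definition of the $m$-invariant, this gives $\dim q \geq m((F_P)_y)$. Finally, Lemma \ref{forms of a prescribed form}(a) identifies $m((F_P)_y) = 4m(\kappa(P))$, yielding $\dim q \geq 4m(\kappa(P))$. The only step requiring care is the choice of regular parameters in the second paragraph; once this is in place, the remainder is essentially a citation of the preceding lemma.
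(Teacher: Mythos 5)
Your proposal follows the paper's proof almost exactly: identify $\widehat{R}_P$ as the relevant two-dimensional regular complete local domain, use the normal crossings hypothesis to put $q$ into the shape required by Lemma~\ref{forms of a prescribed form}, and then cite parts (a) and (b) of that lemma. The one place you slip is in the second paragraph: the entries $a_i$ lie in $F^{\times}$, not in $\widehat{R}_P$, so while the UFD property does give $a_i = u_i x^{r_i} y^{s_i}$ with $u_i \in \widehat{R}_P^{\times}$, the exponents $r_i, s_i$ are a priori arbitrary integers (they can be negative when $a_i$ has a pole along $V(x)$ or $V(y)$), not $\geq 0$ as Lemma~\ref{forms of a prescribed form} requires. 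The paper closes this by scaling $q$ by a suitable $x^r y^s$, which changes neither isotropy nor universality over $F_P$; alternatively one can multiply each $a_i$ by an even power of $x$ and $y$, which changes $q$ only up to isometry. Once you insert that adjustment, your argument is complete and coincides with the paper's.
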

\begin{proof}
The field $F_P$ is the fraction field of the regular complete local domain $\widehat{R}_P$ of dimension two whose residue field $\kappa(P)$ has characteristic $\ne 2$. If $D$ is the union of the supports of the divisors of the entries of $q$ considered as rational functions on $\mathscr{X}$, then because $\mathscr{X}$ is a normal crossings $q$-model of $F$, we can find a local system of parameters $\{x, y\}$ of $\widehat{R}_P$ so that any component of $D$ that passes through $P$ must belong to the zero locus of $xy$ on $\mathscr{X}$. Therefore, after scaling $q$ by an element of the form $x^ry^s$, we may assume that $q$ is of the form in the hypothesis of Lemma~\ref{forms of a prescribed form}. Scaling $q$ by $x^ry^s$ does not affect $q$ being anisotropic and universal over $F_P$, hence $q$ must be anisotropic and universal over $F_{P,y}$ by Lemma \ref{forms of a prescribed form}(b). This implies $\dim q \geq m(F_{P,y})$. Applying Lemma \ref{forms of a prescribed form}(a), we have $\dim q \geq m(F_{P,y}) = 4m(\kappa(P))$.
\end{proof}
We now have the terminology and results needed to show that the converse of Lemma \ref{loop implies m=2} is true.
\begin{lemma}
\label{m=2 implies loop}
Let $T$ be a complete discrete valuation ring of residue characteristic $\ne 2$ with fraction field $K$. Let $F$ be a one-variable function field over $K$. If $m(F) = 2$, then the reduction graph of any regular model of $F$ is not a tree.
\end{lemma}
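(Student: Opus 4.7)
The plan is to prove the contrapositive: assuming the reduction graph of a (hence any, by the independence noted after Notation~\ref{sg fields notation}) regular model of $F$ is a tree, I will show that $m(F) \ne 2$. Since we already have $m(F) \geq 2$ from Corollary~\ref{m at least 2 for sg field}, it suffices to rule out the existence of any two-dimensional anisotropic universal quadratic form over $F$. So suppose for contradiction that $q$ is such a form. Using Remark~\ref{normal crossings q-models exist}, I pass to a normal crossings $q$-model $\mathscr{X}$ of $F$; its reduction graph is still a tree by \cite[Corollary~7.8]{hhk15}.

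The crucial step is to locate a point $P_0$ on the closed fiber $X$ of $\mathscr{X}$ at which $q$ remains both anisotropic and universal. Universality is the easy half: by Lemma~\ref{universal over F implies universal over F_P}, $q$ is universal over $F_P$ for every $P \in X$. For anisotropy, I invoke \cite[Corollary~9.7]{hhk15}: because the reduction graph of $\mathscr{X}$ is a tree, two-dimensional quadratic forms over $F$ satisfy the local-global principle for isotropy with respect to $\{F_P\}_{P \in X}$. Since $q$ is anisotropic over $F$, there must exist some $P_0 \in X$ with $q$ anisotropic over $F_{P_0}$, giving a two-dimensional anisotropic universal form over $F_{P_0}$.

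I then split into two cases based on the type of $P_0$, aiming in each case for $m(F_{P_0}) \geq 4$, which contradicts $\dim q = 2$. If $P_0 = \eta$ is the generic point of an irreducible component of $X$, then $F_\eta$ is the completion of $F$ at the discrete valuation $v_\eta$, and its residue field $\kappa(\eta)$ is a finitely generated extension of transcendence degree one over $k$. Lemma~\ref{m at least 2 for fg trdeg 1} gives $m(\kappa(\eta)) \geq 2$, and then Corollary~\ref{m-inv of cdvf} yields $m(F_\eta) = 2 m(\kappa(\eta)) \geq 4$. If instead $P_0$ is a closed point of $X$, Corollary~\ref{m of F_P for P closed} directly gives $\dim q \geq 4 m(\kappa(P_0)) \geq 4$. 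Either way we reach a contradiction, finishing the proof.

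There is no serious obstacle here beyond assembling the right ingredients: the heavy lifting has already been done in Lemma~\ref{m at least 2 for fg trdeg 1}, Lemma~\ref{universal over F implies universal over F_P}, Corollary~\ref{m-inv of cdvf}, and Corollary~\ref{m of F_P for P closed}, together with the HHK local-global principle for two-dimensional forms in the tree case. The only mild subtlety is ensuring that passing to a normal crossings $q$-model preserves the tree hypothesis on the reduction graph, which is handled by the model-independence of the reduction graph.
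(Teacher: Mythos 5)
Your proof is correct and uses the same ingredients and case analysis as the paper's, just organized as a proof by contrapositive/contradiction: you assume the reduction graph is a tree and use the resulting local-global principle to locate a single $P_0$ where $q$ stays anisotropic and universal, whereas the paper instead shows directly that $q$ is isotropic at every $F_P$ and then concludes via \cite[Corollary~9.7]{hhk15} that the graph is not a tree. Both routes invoke Lemma~\ref{universal over F implies universal over F_P}, Corollary~\ref{m of F_P for P closed} for closed points, and Lemma~\ref{m at least 2 for fg trdeg 1} with Corollary~\ref{m-inv of cdvf} for generic points, so the argument is essentially identical.
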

\begin{proof}
To prove the claim, it suffices to find one regular model of $F$ whose reduction graph is not a tree since the choice of regular model does not affect whether the reduction graph is a tree.

Since $m(F) = 2$, there is an anisotropic universal quadratic form $q$ over $F$ with $\dim q = 2$. Let $\mathscr{X}$ be a normal crossings $q$-model of $F$ with closed fiber $X$. Then by Lemma \ref{universal over F implies universal over F_P}, for all points $P \in X$, since $q$ is universal over $F$, it must also be universal over $F_P$. We will show that $q$ is isotropic over each $F_P$, which implies that $q$ is a two-dimensional counterexample to the local-global principle for isotropy with respect to $\{F_P\}_{P \in X}$. This then proves the claim that the reduction graph of a regular model of $F$ is not a tree \cite[Corollary~9.7]{hhk15}.

We first consider the case that $P \in X$ is a closed point and assume, by contradiction, that $q$ is anisotropic over $F_P$. So $q$ is anisotropic and universal over $F_P$. Applying Corollary \ref{m of F_P for P closed}, we have $\dim q \geq 4m(\kappa(P))$. Since $m(\kappa(P)) \geq 1$, this implies that $\dim q \geq 4$, which is a contradiction since $\dim q = 2$. So $q$ must be isotropic over $F_P$ for all closed points $P \in X$.

Now suppose $P \in X$ is the generic point $\eta$ of an irreducible component of $X$. In this case, the field~$F_{\eta}$ is a complete discretely valued field whose residue field $\kappa(\eta)$ is a finitely generated transcendence degree one extension of the residue field $k$ of $K$. By Lemma \ref{m at least 2 for fg trdeg 1} we have $m(\kappa(\eta)) \geq 2$. Therefore, by Corollary \ref{m-inv of cdvf}, $m(F_{\eta}) = 2m(\kappa(\eta)) \geq 4$. Thus the two-dimensional form $q$ that is universal over~$F_{\eta}$ must be isotropic over $F_{\eta}$. 

We have shown that $q$ is isotropic over $F_P$ for all points $P \in X$, so the proof is complete.
\end{proof}
Combining Corollary \ref{m at least 2 for sg field}, Lemma \ref{loop implies m=2}, and Lemma \ref{m=2 implies loop} together proves the following result.
\begin{prop}
\label{m=2 iff loop}
Let $T$ be a complete discrete valuation ring of residue characteristic $\ne 2$ with fraction field $K$. Let $F$ be a one-variable function field over $K$. Then $m(F) \geq 2$, and $m(F) = 2$ if and only if there is a regular model of $F$ whose reduction graph is not a tree.
\end{prop}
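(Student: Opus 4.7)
The statement is a synthesis of three results already established in this section, so my plan is not to reprove anything but to assemble the pieces carefully.

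First I would note that the isomorphism class of the reduction graph does not depend on the choice of regular model (\cite[Corollary~7.8]{hhk15}), so the phrase ``there is a regular model whose reduction graph is not a tree'' can be replaced by ``the reduction graph of any regular model is not a tree.'' This observation is what makes the biconditional well-posed and lets both directions be handled symmetrically.

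Next I would split the statement into three assertions and cite the corresponding result for each. The inequality $m(F) \geq 2$ is exactly the content of Corollary~\ref{m at least 2 for sg field}, which applies because $F$ is a one-variable function field over $K$, so a finitely generated transcendence degree one extension of $K$ (in fact Lemma~\ref{m at least 2 for fg trdeg 1} is the underlying ingredient, since we are never in the quadratically closed case). The implication from ``reduction graph not a tree'' to $m(F) = 2$ is Lemma~\ref{loop implies m=2}; that lemma in turn rested on combining \cite[Corollary~9.7]{hhk15} (existence of a two-dimensional anisotropic form that is isotropic over every $F_P$) with Lemma~\ref{everywhere locally universal implies globally universal} and the local-global principle for isotropy of three-dimensional forms. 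The reverse implication, $m(F) = 2$ forces the reduction graph to fail to be a tree, is Lemma~\ref{m=2 implies loop}.

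The only genuine content I would write out in the proof is the assembly itself: start with an anisotropic universal form of dimension $m(F)$, invoke $m(F) \geq 2$ from Corollary~\ref{m at least 2 for sg field}, and then handle the two directions of the equivalence $m(F) = 2 \Longleftrightarrow \text{reduction graph is not a tree}$ by quoting Lemmas \ref{loop implies m=2} and \ref{m=2 implies loop} respectively. There is no real obstacle here, since every ingredient is already on the page; the ``hard work'' was done in Lemma~\ref{m=2 implies loop}, where one had to combine Lemma~\ref{universal over F implies universal over F_P} with Corollary~\ref{m of F_P for P closed} (to rule out closed points) and Corollary~\ref{m-inv of cdvf} together with Lemma~\ref{m at least 2 for fg trdeg 1} (to rule out generic points of the closed fiber). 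Consequently, the proof of the proposition itself should be one or two sentences: it is exactly the concatenation ``Corollary~\ref{m at least 2 for sg field} + Lemma~\ref{loop implies m=2} + Lemma~\ref{m=2 implies loop}.''
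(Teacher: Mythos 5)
Your proposal is correct and matches the paper's proof exactly: the paper proves Proposition~\ref{m=2 iff loop} by simply combining Corollary~\ref{m at least 2 for sg field}, Lemma~\ref{loop implies m=2}, and Lemma~\ref{m=2 implies loop}. Your additional remark that \cite[Corollary~7.8]{hhk15} makes the biconditional well-posed is a sensible clarification, consistent with how the paper handles the issue inside Lemma~\ref{m=2 implies loop}.
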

\begin{remark}
\label{sg fields with m<u}
As a consequence of Proposition \ref{m=2 iff loop}, we can find numerous fields $F$ such that $m(F) < u(F)$. Indeed, let $k$ be a finite field of odd characteristic, and for any integer $n \geq 1$ let $K_n = k((t_1))\cdots ((t_n))$ be the field of iterated Laurent series over $k$. Let $F_n$ be a semi-global field over $K_n$ with a regular model whose reduction graph is not a tree (such an $F_n$ always exists \cite[Lemma~5.1]{hkp}). Then by Proposition \ref{m=2 iff loop}, $m(F_n) = 2$, but $u(F_n) = 2^{n+2}$ \cite[Corollary~4.14(b)]{hhk09}.
\end{remark}

\section{The strong $m$-invariant}
\label{strong m}
In \cite{hhk09}, Harbater, Hartmann, and Krashen defined the strong $u$-invariant of a field in order to calculate the $u$-invariant of certain semi-global fields (most notably to show $u(\Q_p(x)) = 8$). In this section, we define the strong $m$-invariant of a field $k$ in order to calculate the $m$-invariant of certain semi-global fields, and show that it relates to the strong $u$-invariant of $k$ in ways that are analogous to how $m(k)$ and $u(k)$ relate to one another. We begin by recalling the definition of the strong $u$-invariant of $k$.
\begin{defin}[\cite{hhk09}]
\label{strong u}
Let $k$ be a field. The \textit{strong $u$-invariant} of $k$, denoted by $u_s(k)$, is the smallest real number $n$ such that
\begin{itemize}
	\item $u(k') \leq n$ for all finite field extensions $k'/k$, and
	
	\item $u(K') \leq 2n$ for all finitely generated field extensions $K'/k$ of transcendence degree one.
\end{itemize}
If these $u$-invariants are arbitrarily large we say that $u_s(k) = \infty$.
\end{defin}
It is noted in \cite{hhk09} that, since the $u$-invariant, if finite, is a positive integer, the strong $u$-invariant always belongs to $\frac{1}{2} \mathbb{N}$. Motivated by this definition, we introduce the following definition.
\begin{defin}
\label{strong m-inv}
Let $k$ be a field. The \textit{strong $m$-invariant} of $k$, denoted by $m_s(k)$, is the largest integer $n$ such that
\begin{itemize}
	\item $m(k') \geq n$ for all finite field extensions $k'/k$, and
	
	\item $m(k'(t)) \geq 2n$ for all finite field extensions $k'/k$.
\end{itemize}
If these $m$-invariants are arbitrarily large we say that $m_s(k) = \infty$.
\end{defin}
\begin{examples}
\label{strong m examples}
\begin{enumerate}[label=(\arabic*)]
	\item Let $k$ be an algebraically closed field of characteristic $\ne 2$. Then $m_s(k) = 1$. Indeed, the only finite extension of $k$ is $k$ itself, and we know that $m(k) = 1$. Moreover, the field $k(t)$ is a $C_1$ field, so $u(k(t)) \leq 2$, and $m(k(t)) \geq 2$ by Lemma \ref{m at least 2 for fg trdeg 1}. Therefore $m(k(t)) = 2$.
	
	\item Let $k$ be a finite field of odd characteristic. Then $m_s(k) = 2$. Indeed, for any finite extension $k'/k$, $m(k') = 2$ since $u(k') = 2$, and $m(k'(t)) = 4$ by, e.g., \cite[Example~2.8]{m-inv}.
	
	\item Let $L$ be any finitely generated transcendence degree one extension of an algebraically closed field $k$ of characteristic $\ne 2$. Then $m_s(L) = u_s(L) = 2$. Indeed, for any finite extension $L'/L$ we have $m(L') \geq 2$ by Lemma \ref{m at least 2 for fg trdeg 1}, and $u(L') \leq 2$ since $L'$ is a $C_1$ field. Furthermore, by \cite[Proposition~3.4]{becher-leep}, $m(L'(t)) \geq 4$, hence $m_s(L) = 2$. For any finitely generated transcendence degree one extension $E$ of $L$, $E$ is a $C_2$ field, so $u(E) \leq 4$. Therefore $u_s(L) = 2$.
\end{enumerate}
\end{examples}

The main goal of this section is to prove Theorem \ref{strong m of cdvf}: for a complete discretely valued field~$K$ with residue field $k$ of characteristic $\ne 2$ satisfying $m_s(k) = u_s(k)$, we have $m_s(K) = 2m_s(k)$. This is an analogue of \cite[Theorem~4.10]{hhk09}, which states that $u_s(K) = 2u_s(k)$. With this goal in mind, we make two observations about the differences between the definitions of $u_s(k)$ and $m_s(k)$.

In the definition of $m_s(k)$, we are asking for the \textit{largest} integer $n$ that gives \textit{lower} bounds on the $m$-invariants of certain field extensions of $k$. This is the opposite of $u_s(k)$, which is asking for the smallest $n$ that gives upper bounds on the $u$-invariants of certain field extensions of $k$. This difference is due to the fact that finding upper bounds for $m(k)$ (respectively lower bounds for $u(k)$) is ``easy'', while finding lower bounds for $m(k)$ (respectively upper bounds for~$u(k)$) is challenging. The next key difference between $m_s(k)$ and $u_s(k)$ is in the second requirement of the definitions. In the definition of $u_s(k)$, we consider all finitely generated field extensions $K'/k$ of transcendence degree one, whereas in the definition of $m_s(k)$, we consider only those transcendence degree one extensions of $k$ of the form $K' = k'(t)$ for some $k'/k$ finite. This is because the $m$-invariant does not behave as uniformly as the $u$-invariant over arbitrary finitely generated field extensions of transcendence degree one, as the following example illustrates.

\begin{example}
\label{Q_p for m_s}
Let $F = \Q_p(x)$ for an odd prime $p$. We will see in Corollary \ref{some m examples} that $m(\Q_p(x)) = 8$. Consider the quadratic extension of $F$ given by $F' = \Q_p(x)\left(\sqrt{x(1-x)(x-p)}\right)$. In \cite[Appendix]{cps12}, Colliot-Th\'{e}l\`{e}ne, Parimala, and Suresh showed that the function $(1-x)$ is not a square in $F'$, but is a square in $F'_v$ for every discrete valuation $v$ on $F'$. Therefore there is a two-dimensional counterexample to the local-global principle for isotropy of quadratic forms over $F'$ with respect to the set of all discrete valuations on $F'$. By \cite[Theorem~9.11(a)]{hhk15}, this implies that the reduction graph of a regular model of $F'$ is not a tree, hence $m(F') = 2$ by Lemma~\ref{loop implies m=2}. If the second condition in the definition of $m_s$ considered \textit{all} finitely generated field extensions of transcendence degree one, then the strong $m$-invariant of $\Q_p$ would be 1, and therefore Theorem \ref{strong m of cdvf} would be false since $m_s(\F_p) = 2$.
\end{example}

We begin by investigating basic properties of $m_s(k)$.
\begin{lemma}
\label{strong m bounded by strong u}
Let $k$ be any field of characteristic $\ne 2$. Then $m_s(k) \leq u_s(k)$.
\end{lemma}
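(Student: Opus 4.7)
The plan is to exploit the fact that both invariants can be bounded in terms of the field $k$ itself by taking the trivial finite extension $k'=k$ in their defining conditions. Specifically, plugging $k'=k$ into Definition \ref{strong u} yields $u(k) \le u_s(k)$, while plugging $k'=k$ into Definition \ref{strong m-inv} yields $m_s(k) \le m(k)$. Combining these with the elementary inequality $m(k) \le u(k)$ noted after Definition \ref{m-inv} would then give the desired chain
\[
    m_s(k) \;\le\; m(k) \;\le\; u(k) \;\le\; u_s(k).
\]
Thus the bulk of the proof is a one-line chain of inequalities; there is no real obstacle, only a couple of bookkeeping points about infinite values.

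The bookkeeping amounts to splitting into cases based on finiteness. If $u_s(k) = \infty$, the conclusion is immediate, so I would assume $u_s(k) < \infty$. Then specializing $k' = k$ in the first condition of Definition \ref{strong u} gives $u(k) \le u_s(k) < \infty$; in particular a quadratic form of dimension $u(k)$ over $k$ must be anisotropic and universal, hence $m(k) \le u(k)$ is a finite positive integer. Since $m_s(k)$ is the largest integer satisfying certain lower bounds on $m$-invariants of finite extensions, specializing $k' = k$ in Definition \ref{strong m-inv} gives $m_s(k) \le m(k)$, and $m_s(k)$ is a well-defined finite integer because the defining set is bounded above by $m(k)$. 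Assembling the three inequalities completes the proof.

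For completeness I would also note the symmetric edge case: if $m_s(k) = \infty$, then $m(k) = \infty$, which forces $u(k) = \infty$ (otherwise a form of dimension $u(k)$ would witness $m(k) \le u(k) < \infty$), and hence $u_s(k) = \infty$, so the inequality holds trivially. No step requires any deeper result beyond what the paper has already recorded, so the main ``obstacle'' is just stating the argument cleanly.
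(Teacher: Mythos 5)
Your proof is correct and takes essentially the same approach as the paper's: both reduce to the chain $m_s(k) \le m(k) \le u(k) \le u_s(k)$, obtained by specializing to $k'=k$ in the definitions of $m_s$ and $u_s$ and invoking the elementary bound $m(E) \le u(E)$. The paper phrases the inequalities for an arbitrary finite extension $k'/k$ rather than just $k'=k$, but the logical content and the handling of infinite values are the same as in your argument.
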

\begin{proof}
Let $k'/k$ be any finite extension. Then
\begin{itemize}
	\item $m(k') \leq u(k') \leq u_s(k)$,
	
	\item $m(k'(t)) \leq u(k'(t)) \leq 2u_s(k)$.
\end{itemize}
Therefore, by definition, $m_s(k) \leq u_s(k)$.
\end{proof}
\begin{cor}
\label{if u_s = 1}
For a field $k$ of characteristic $\ne 2$, if $u_s(k) = 1$, then $m_s(k) = 1$.
\end{cor}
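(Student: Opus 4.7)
The plan is to sandwich $m_s(k)$ between $1$ and $1$. The upper bound $m_s(k) \leq 1$ is immediate from Lemma \ref{strong m bounded by strong u} together with the hypothesis $u_s(k) = 1$. So the only real task is to verify $m_s(k) \geq 1$, i.e., to check that the two conditions in Definition \ref{strong m-inv} hold with $n = 1$ for every field $k$ of characteristic $\ne 2$.

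For the first condition, I would note that $m(k') \geq 1$ holds for every field $k'$ by construction, since Definition \ref{m-inv} takes the minimum over dimensions $\geq 1$ (or returns $\infty$ if no anisotropic universal form exists). For the second condition, given a finite extension $k'/k$, the field $k'(t)$ is a finitely generated transcendence degree one extension of $k'$, and $\Char k' \ne 2$, so Lemma \ref{m at least 2 for fg trdeg 1} yields $m(k'(t)) \geq 2$. Together these show that $n = 1$ satisfies the defining inequalities, so $m_s(k) \geq 1$.

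There is no substantive obstacle in this proof: it is a direct combination of the inequality $m_s \leq u_s$ with the unconditional fact that transcendence degree one extensions already force $m \geq 2$. If anything, the only minor point to handle cleanly is the convention that $m(\cdot) \geq 1$ even when no anisotropic universal form exists, so that the first bullet in Definition \ref{strong m-inv} is automatic.
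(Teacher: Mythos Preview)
Your proposal is correct and follows essentially the same argument as the paper: both combine Lemma~\ref{strong m bounded by strong u} for the upper bound $m_s(k)\le u_s(k)=1$ with the observation that $m(k')\ge 1$ always and $m(k'(t))\ge 2$ by Lemma~\ref{m at least 2 for fg trdeg 1} to get $m_s(k)\ge 1$. The only difference is the order in which the two bounds are presented.
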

\begin{proof}
For any field $E$ of characteristic $\ne 2$, $m(E) \geq 1$, and by Lemma \ref{m at least 2 for fg trdeg 1}, $m(E(t)) \geq 2$, so $m_s(k) \geq 1$. By Lemma \ref{strong m bounded by strong u} we have $m_s(k) \leq u_s(k) = 1$, hence $m_s(k) = 1$ as claimed.
\end{proof}
The next lemma shows that the analogue of Proposition \ref{separated by power of 2} holds for $m_s$ and $u_s$.
\begin{lemma}
\label{strong m and strong u power of 2}
Let $k$ be any field of characteristic $\ne 2$. If $n \geq 0$ is the largest integer such that $2^n \leq u_s(k)$, then $m_s(k) \leq 2^n \leq u_s(k)$.
\end{lemma}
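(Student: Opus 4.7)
The plan is to deduce this directly from the previously established Proposition \ref{separated by power of 2} by specializing to $k' = k$ in the definitions of $u_s(k)$ and $m_s(k)$. The statement is really saying that the power-of-two separation between $m$ and $u$ for an individual field propagates to their strong analogues.

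First, I dispose of the case $u_s(k) = \infty$: no largest $n$ with $2^n \leq u_s(k)$ exists, so the hypothesis is vacuous and there is nothing to prove. Thus I assume $u_s(k) < \infty$, and let $n \geq 0$ be as in the statement, so that $2^n \leq u_s(k) < 2^{n+1}$. The right-hand inequality $2^n \leq u_s(k)$ is immediate from the choice of $n$, so the entire content is the inequality $m_s(k) \leq 2^n$.

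To obtain $m_s(k) \leq 2^n$, I observe two bounds linked by Proposition \ref{separated by power of 2}. On the one hand, taking the finite extension $k'/k$ to be $k/k$ itself in the first bullet of Definition \ref{strong u} gives $u(k) \leq u_s(k) < 2^{n+1}$. So the largest integer $m'$ with $2^{m'} \leq u(k)$ satisfies $m' \leq n$, and Proposition \ref{separated by power of 2} applied to $k$ yields $m(k) \leq 2^{m'} \leq 2^n$. On the other hand, taking $k' = k$ in the first bullet of Definition \ref{strong m-inv} gives $m(k) \geq m_s(k)$. Chaining these two inequalities produces $m_s(k) \leq m(k) \leq 2^n$, as required.

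The step I expect to be trivial rather than obstructive is precisely the main step: both bounds in the argument are just instances of the definitions with $k'$ chosen to be $k$, combined with the already-proven Proposition \ref{separated by power of 2}. The only potential subtlety is the degenerate case $u(k) = 0$, which does not occur in characteristic $\neq 2$ since $\langle 1 \rangle$ is anisotropic, guaranteeing $u(k) \geq 1$ and hence that $m'$ exists.
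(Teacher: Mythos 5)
Your proof is correct and uses essentially the same mechanism as the paper, namely combining $u_s(k) < 2^{n+1}$ with Proposition~\ref{separated by power of 2} and the definitions of the strong invariants. The only difference is cosmetic: the paper derives the bound $m(k') \leq 2^n$ for all finite extensions $k'/k$ before invoking the definition of $m_s(k)$, whereas you observe that specializing to $k' = k$ (giving $m_s(k) \leq m(k) \leq 2^n$) already suffices.
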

\begin{proof}
Since $n$ is the largest integer such that $2^n \leq u_s(k)$, we have $u_s(k) < 2^{n+1}$. Therefore for all finite extensions $k'/k$ we have $u(k') < 2^{n+1}$ and $u(k'(t)) < 2^{n+2}$. This, in turn, implies that $m(k') \leq 2^n$ and $m(k'(t)) \leq 2^{n+1}$ by Proposition \ref{separated by power of 2}. Thus, by definition, $m_s(k) \leq 2^n$.
\end{proof}
\begin{cor}
\label{if strong m = strong u}
Let $k$ be a field of characteristic $\ne 2$ such that $m_s(k) = u_s(k) < \infty$. Then $m_s(k) = u_s(k) = 2^n$ for some integer $n \geq 0$. Moreover, for all finite field extensions $k'/k$ we have
\begin{enumerate}[label=(\alph*)]
	\item $m(k') = u(k') = 2^n$,
	
	\item $m(k'(t)) = u(k'(t)) = 2^{n+1}$.
\end{enumerate}
\end{cor}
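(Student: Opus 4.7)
The plan is to deduce the statement directly from Lemma~\ref{strong m and strong u power of 2} together with the defining properties of $m_s(k)$ and $u_s(k)$, since under the hypothesis $m_s(k) = u_s(k) < \infty$ these definitions squeeze the invariants of $k'$ and $k'(t)$ from both sides. No new ideas beyond those already established in Section~\ref{strong m} should be needed; this is purely a bookkeeping corollary.

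First I would establish that $m_s(k) = u_s(k) = 2^n$. Let $n \geq 0$ be the largest integer with $2^n \leq u_s(k)$, which exists since $u_s(k)$ is finite and at least $1$ (the latter because $u_s(k) \geq m_s(k) \geq 1$ by the obvious fact that $m(k') \geq 1$ for any field $k'$ of characteristic $\ne 2$). Lemma~\ref{strong m and strong u power of 2} then gives $m_s(k) \leq 2^n \leq u_s(k)$, and the hypothesis $m_s(k) = u_s(k)$ forces both invariants to equal $2^n$.

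For part (a), I would let $k'/k$ be an arbitrary finite extension and combine the two defining inequalities with the elementary bound $m(k') \leq u(k')$. By the defining property of $u_s(k)$ we have $u(k') \leq u_s(k) = 2^n$, and by the defining property of $m_s(k)$ we have $m(k') \geq m_s(k) = 2^n$. Sandwiching gives $m(k') = u(k') = 2^n$. For part (b), I would apply the same argument to $k'(t)$: this is a finitely generated transcendence degree one extension of $k$, so the definition of $u_s(k)$ yields $u(k'(t)) \leq 2u_s(k) = 2^{n+1}$, and the definition of $m_s(k)$ yields $m(k'(t)) \geq 2m_s(k) = 2^{n+1}$. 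Combined with $m(k'(t)) \leq u(k'(t))$, this gives the claimed equality $m(k'(t)) = u(k'(t)) = 2^{n+1}$.

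There is no serious obstacle here; the only subtlety worth mentioning in the write-up is that the second bullet in the definition of $u_s(k)$ quantifies over \emph{all} finitely generated transcendence degree one extensions (so in particular applies to $k'(t)$), while the second bullet in the definition of $m_s(k)$ already quantifies precisely over extensions of the form $k'(t)$, so the two definitions align cleanly for the purposes of part~(b).
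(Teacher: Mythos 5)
Your proof is correct and takes essentially the same route as the paper: establish $m_s(k)=u_s(k)=2^n$ via Lemma~\ref{strong m and strong u power of 2}, then sandwich $m(k')\leq u(k')$ (resp.\ $m(k'(t))\leq u(k'(t))$) between the defining lower bound from $m_s(k)$ and the defining upper bound from $u_s(k)$. One tiny imprecision in your aside: justifying $m_s(k)\geq 1$ requires both $m(k')\geq 1$ \emph{and} $m(k'(t))\geq 2$ (the latter from Lemma~\ref{m at least 2 for fg trdeg 1}), not just the first inequality; but this does not affect the argument, since the existence of $n$ already follows from $u_s(k)\geq u(k)\geq 1$.
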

\begin{proof}
Let $n \geq 0$ be the largest integer such that $2^n \leq u_s(k)$. Then $m_s(k) \leq 2^n \leq u_s(k)$ by Lemma~\ref{strong m and strong u power of 2}. So, because $m_s(k) = u_s(k)$, we must have $m_s(k) = u_s(k) = 2^n$.

Now let $k'/k$ be any finite field extension. The remaining claims of the corollary follow from these inequalities:

(a) $2^n = m_s(k) \leq m(k') \leq u(k') \leq u_s(k) = 2^n$.
	
(b) $2^{n+1} = 2m_s(k) \leq m(k'(t)) \leq u(k'(t)) \leq 2u_s(k) = 2^{n+1}$.
\end{proof}

By studying how $u_s$ and $m_s$ change under finite extension, we see that if $m_s = u_s$, then these invariants are stable under finite extension.
\begin{lemma}
\label{strong invariants under finite extension}
Let $k$ be any field of characteristic $\ne 2$, and let $k'/k$ be any finite field extension. Then
\[
	m_s(k) \leq m_s(k') \leq u_s(k') \leq u_s(k).
\]
In particular, if $m_s(k) = u_s(k)$, then $m_s(k) = m_s(k') = u_s(k') = u_s(k)$.
\end{lemma}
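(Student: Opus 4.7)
The plan is to treat the three inequalities separately; each follows directly from the relevant definition together with transitivity of finite extensions.

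For the rightmost inequality $u_s(k') \leq u_s(k)$, I would note that any finite extension $k''/k'$ is also a finite extension of $k$, and any finitely generated transcendence degree one extension $K'/k'$ is likewise such an extension of $k$ (viewed through the tower). Hence the two bounds in Definition~\ref{strong u} that define $u_s(k)$ also hold with $k$ replaced by $k'$, which by the minimality in that definition gives $u_s(k') \leq u_s(k)$.

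The middle inequality $m_s(k') \leq u_s(k')$ is simply Lemma~\ref{strong m bounded by strong u} applied to $k'$.

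For the leftmost inequality $m_s(k) \leq m_s(k')$, set $n = m_s(k)$. For every finite extension $k''/k'$, the composite extension $k''/k$ is finite, so by Definition~\ref{strong m-inv} applied to $k$ we have $m(k'') \geq n$ and $m(k''(t)) \geq 2n$. These are precisely the two conditions that $n$ must satisfy to be an admissible integer in the definition of $m_s(k')$, and since $m_s(k')$ is the \emph{largest} such integer, we conclude $m_s(k') \geq n = m_s(k)$. The ``in particular'' statement is then immediate: if $m_s(k) = u_s(k)$, the chain $m_s(k) \leq m_s(k') \leq u_s(k') \leq u_s(k)$ collapses to equalities throughout.

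There is no real obstacle here; the only subtle point is that the definition of $m_s$ restricts attention to rational function fields $k''(t)$ rather than arbitrary transcendence degree one extensions (cf.\ Example~\ref{Q_p for m_s}), but this restriction is preserved under passing from $k$ to $k'$ because finite extensions of $k'$ are finite extensions of $k$.
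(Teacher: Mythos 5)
Your proof is correct and follows essentially the same route as the paper: the middle inequality is Lemma~\ref{strong m bounded by strong u} applied to $k'$, and the two outer inequalities both come from observing that finite extensions of $k'$ (respectively finitely generated transcendence degree one extensions of $k'$) are also such extensions of $k$, then invoking the extremality built into Definitions~\ref{strong u} and~\ref{strong m-inv}.
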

\begin{proof}
The second claim follows immediately from the first, so it suffices to prove the first claim. By Lemma \ref{strong m bounded by strong u}, we know $m_s(k') \leq u_s(k')$. It therefore suffices to prove the first and third inequalities.

Let $k''$ be any finite field extension of $k'$. Then since $k''$ is also a finite extension of $k$, we have $m(k'') \geq m_s(k)$ and $m(k''(t)) \geq 2m_s(k)$, so $m_s(k') \geq m_s(k)$ by definition. Moreover, we have $u(k'') \leq u_s(k)$ by the definition of $u_s(k)$.

Now let $K'$ be any finitely generated field extension of $k'$ of transcendence degree one. Then~$K'$ is also a finitely generated field extension of $k$ of transcendence degree one, so $u(K') \leq 2u_s(k)$. Therefore $u_s(k') \leq u_s(k)$ by definition, completing the proof of the lemma.
\end{proof}

Let $T$ be a complete discrete valuation ring with residue field $k$ of characteristic $\ne 2$ and fraction field $K$. Using Springer's Theorem, one can show that $u(K) = 2u(k)$, and \cite[Theorem~4.10]{hhk09} states that the same conclusion holds for the strong $u$-invariant; i.e., $u_s(K) = 2u_s(k)$. We are now ready to prove
\begin{theorem}
\label{strong m of cdvf}
Let $T$ be a complete discrete valuation ring with fraction field $K$ and residue field~$k$ of characteristic $\ne 2$. If $m_s(k) = u_s(k)$, then $m_s(K) = 2m_s(k)$.
\end{theorem}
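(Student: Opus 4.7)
The plan is to prove the inequalities $m_s(K) \leq 2m_s(k)$ and $m_s(K) \geq 2m_s(k)$ separately. The upper bound is immediate: Lemma~\ref{strong m bounded by strong u} gives $m_s(K) \leq u_s(K)$, while \cite[Theorem~4.10]{hhk09} states $u_s(K) = 2u_s(k)$, and the hypothesis $m_s(k) = u_s(k)$ yields $m_s(K) \leq 2m_s(k)$. For the lower bound I must verify both conditions in Definition~\ref{strong m-inv} applied to $K$. The first is easy: for any finite $K'/K$, the field $K'$ is complete discretely valued with residue field $k'/k$ finite, so Corollary~\ref{m-inv of cdvf} gives $m(K') = 2m(k')$, and Corollary~\ref{if strong m = strong u}(a) gives $m(k') = m_s(k)$; thus $m(K') = 2m_s(k)$.

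The substantive step is the bound $m(K'(t)) \geq 4m_s(k)$ for every finite $K'/K$. I would argue by contradiction: assume $q$ is anisotropic and universal over $F = K'(t)$ with $\dim q < 4m_s(k)$. Let $T'$ denote the valuation ring of $K'$, and, following Remark~\ref{normal crossings q-models exist}, obtain a normal crossings $q$-model $\mathscr{X}$ of $F$ over $T'$ as an iterated blow-up of $\mathbb{P}^1_{T'}$. Each blow-up at a closed point introduces an exceptional component isomorphic to $\mathbb{P}^1$ over the residue field of that point; hence every irreducible component of the closed fiber $X$ of $\mathscr{X}$ has function field of the form $k''(s)$ with $k''/k'$ finite. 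Since the reduction graph of $\mathbb{P}^1_{T'}$ is a single vertex, the model-independence statement \cite[Corollary~7.8]{hhk15} forces the reduction graph of $\mathscr{X}$ to be a tree, so Proposition~\ref{m=2 iff loop} yields $m(F) \geq 3$; in particular $\dim q \geq 3$.

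Next I would invoke the local-global principle \cite[Theorem~9.3]{hhk15}: since $q$ is anisotropic over $F$ of dimension at least $3$, it must be anisotropic over $F_P$ for some $P \in X$, while Lemma~\ref{universal over F implies universal over F_P} guarantees $q$ is universal over every $F_P$. If this $P$ is a closed point, Corollary~\ref{m of F_P for P closed} gives $\dim q \geq 4m(\kappa(P)) \geq 4m_s(k)$, since $\kappa(P)$ is a finite extension of $k$; this contradicts $\dim q < 4m_s(k)$. If instead $P = \eta$ is the generic point of a component of $X$, then $F_\eta$ is a complete discretely valued field with residue field $\kappa(\eta) \cong k''(s)$, and decomposing $q \simeq q_1 \perp \pi q_2$ with unit entries and applying Lemma~\ref{anisotropic universal residue forms} shows both residue forms are anisotropic and universal over $\kappa(\eta)$ with positive dimension; hence $\dim q \geq 2m(k''(s)) \geq 4m_s(k)$ by the definition of $m_s$, again a contradiction.

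The main obstacle is the structural assertion that the normal crossings $q$-model can be arranged so that every irreducible component of its closed fiber has function field of the form $k''(s)$; this is precisely the point at which the argument uses the purely transcendental nature of the extension $K'(t)/K'$, and is consistent with the failure described in Example~\ref{Q_p for m_s} of the analogous lower bound on $m$ over general finite transcendence-degree-one extensions (which is what motivates the restricted quantification in Definition~\ref{strong m-inv}).
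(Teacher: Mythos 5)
Your proof is correct and takes essentially the same approach as the paper: the upper bound is identical, and the substantive lower bound $m(K'(t)) \geq 4m_s(k)$ is established exactly as in the paper's Proposition~\ref{lower bound on strong m}, by passing to a normal crossings $q$-model obtained by blowing up $\mathbb{P}^1_{T'}$, invoking \cite[Theorem~9.3]{hhk15} to locate a point $P_*$ with $q$ anisotropic over $F_{P_*}$, and then applying Corollary~\ref{m of F_P for P closed} (closed-point case) or the fact that residue fields at generic points are rational function fields $k''(s)$ with $k''/k'$ finite (generic-point case). The only cosmetic differences are that you argue by contradiction, you derive $\dim q \geq 3$ from Proposition~\ref{m=2 iff loop} together with model-independence of the reduction graph rather than citing \cite[Proposition~3.4]{becher-leep} directly, and you use Corollary~\ref{if strong m = strong u}(a) where the definition of $m_s(k)$ alone would suffice.
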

A main ingredient in the proof of Theorem \ref{strong m of cdvf} is the following result.
\begin{prop}
\label{lower bound on strong m}
Let $K$ be a complete discretely valued field with residue field $k$ of characteristic $\ne 2$, and suppose that $m_s(k) \geq n$ for some integer $n \geq 1$. Then $m_s(K) \geq 2n$.
\end{prop}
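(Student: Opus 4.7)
The plan is to unpack what $m_s(K) \geq 2n$ requires by Definition \ref{strong m-inv} and verify each condition directly. For every finite extension $K'/K$ I must show (i) $m(K') \geq 2n$ and (ii) $m(K'(t)) \geq 4n$. Part (i) is formal: such a $K'$ is again a complete discretely valued field whose residue field $k'$ is a finite extension of $k$, so Corollary \ref{m-inv of cdvf} gives $m(K') = 2m(k') \geq 2m_s(k) \geq 2n$.

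The substance of the proof lies in part (ii). Fix a finite $K'/K$ with valuation ring $T'$ and residue field $k'$, set $F = K'(t)$, and let $q$ be an arbitrary anisotropic universal quadratic form over $F$; the goal is $\dim q \geq 4n$. I first dispose of small dimensions. The case $\dim q = 1$ is impossible because $t$ is not a square in $F$, so $F$ is not quadratically closed. The case $\dim q = 2$ is ruled out by Proposition \ref{m=2 iff loop}, since the regular model $\mathbb{P}^1_{T'}$ of $F$ has smooth irreducible closed fiber $\mathbb{P}^1_{k'}$ and hence single-vertex reduction graph, which is a tree. Thus $\dim q \geq 3$.

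The key geometric step is to choose the model carefully. Starting from $\mathbb{P}^1_{T'}$ and applying the blow-up procedure of Remark \ref{normal crossings q-models exist}, which only blows up closed points of a regular surface, I obtain a normal crossings $q$-model $\mathscr{X}$ of $F$ in which every irreducible component of the closed fiber $X$ is isomorphic to $\mathbb{P}^1_L$ for some finite extension $L/k'$. The reason is that each blow-up at a closed point $P$ introduces an exceptional divisor $\mathbb{P}^1_{\kappa(P)}$ over a residue field finite over $k'$, while strict transforms of existing projective-line components stay projective lines over the same field. Now by \cite[Theorem~9.3]{hhk15} there is some $P \in X$ over which $q$ remains anisotropic, and by Lemma \ref{universal over F implies universal over F_P} the form $q$ is also universal over $F_P$.

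Finally, I split on the type of $P$. If $P$ is a closed point, Corollary \ref{m of F_P for P closed} yields $\dim q \geq 4m(\kappa(P))$, and since $\kappa(P)/k$ is finite, $m(\kappa(P)) \geq m_s(k) \geq n$, giving $\dim q \geq 4n$. If instead $P = \eta$ is the generic point of an irreducible component of $X$, then by construction $\kappa(\eta) \cong L(s)$ for some finite extension $L/k$, and Lemma \ref{anisotropic universal residue forms} applied to $q$ over the complete discretely valued field $F_\eta$ produces two residue forms that are anisotropic and universal over $L(s)$, so that $\dim q \geq 2m(L(s)) \geq 4m_s(k) \geq 4n$ by the second clause of the definition of $m_s(k)$. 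The main conceptual obstacle is precisely this geometric observation: that generic residue fields can be kept in the form $L(s)$ with $L/k$ finite, which is forced by starting the model at $\mathbb{P}^1_{T'}$ and is what couples the proof to the (restrictive) transcendental clause of Definition \ref{strong m-inv}.
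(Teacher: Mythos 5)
Your proof is correct and follows essentially the same route as the paper: take a normal crossings $q$-model arising from $\mathbb{P}^1_{T'}$, invoke the Harbater--Hartmann--Krashen local-global principle to find a point $P_*$ where $q$ stays anisotropic and universal, and split on whether $P_*$ is closed or generic, using the fact that in the generic case the residue field is $L(s)$ with $L/k$ finite precisely because the model was built by blowing up $\mathbb{P}^1_{T'}$. The only minor differences from the paper are cosmetic: the paper obtains $\dim q \geq 3$ by citing Becher--Leep directly rather than via Proposition \ref{m=2 iff loop}, and it packages the generic-point computation through Corollary \ref{m-inv of cdvf} rather than invoking Lemma \ref{anisotropic universal residue forms} directly, but these are equivalent.
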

\begin{proof}
By definition, since $m_s(k) \geq n$, for all finite field extensions $k'/k$ we have
\[
	m(k') \geq n \text{ and } m(k'(t)) \geq 2n.
\]

Now let $K'/K$ be any finite extension of $K$. The field $K'$ is a finite extension of the complete discretely valued field $K$, so $K'$ is also a complete discretely valued field, with residue field~$k'$ a finite field extension of $k$. By Corollary \ref{m-inv of cdvf} we have $m(K') = 2m(k') \geq 2n$. So to complete the proof, we must show that $m(K'(x)) \geq 4n$.

Let $S$ be the valuation ring of $K'$, let $F = K'(x)$, and let $q$ be any anisotropic universal quadratic form over $F$. We want to show that $\dim q \geq 4n$. By \cite[Proposition~3.4]{becher-leep} we know $m(F) > 2$, thus $\dim q \geq 3$. By taking a suitable blow-up $\mathscr{X} \to \P^1_S$ we obtain a normal crossings $q$-model $\mathscr{X}$ of~$F$ with closed fiber $X$. By our choice of model $\mathscr{X}$, for all points $P \in X$, since~$q$ is universal over $F$, it is also universal over $F_P$ by Lemma \ref{universal over F implies universal over F_P}. Moreover, because $q$ is anisotropic over~$F$ with $\dim q \geq 3$, by \cite[Theorem~9.3]{hhk15} there must be a particular point $P_* \in X$ such that~$q$ is anisotropic over $F_{P_*}$. Thus $q$ is anisotropic and universal over $F_{P_*}$. This point $P_*$ is either a closed point or the generic point of an irreducible component of $X$.

If $P_*$ is a closed point, then because $q$ is anisotropic and universal over $F_{P_*}$, by Corollary~\ref{m of F_P for P closed} we have $\dim q \geq 4m(\kappa(P_*))$. The field $\kappa(P_*)$ is a finite extension of $k$, thus $m(\kappa(P_*)) \geq n$ by our assumption on $k$. This implies that $\dim q \geq 4n$ in this case.

Now suppose that $P_*$ is the generic point $\eta$ of an irreducible component $X_{\eta}$ of the closed fiber of~$\mathscr{X}$. In this case the field $F_{P_*} = F_{\eta}$ is a complete discretely valued field whose residue field~$\kappa(\eta)$ is the residue field of the local ring $\mathcal{O}_{\mathscr{X}, \eta}$ of $\mathscr{X}$ at $\eta$. Since $\mathscr{X}$ is a blow-up of the regular $S$-curve~$\P^1_S$, the irreducible component $X_{\eta}$ is either an exceptional divisor or birational to the closed fiber of~$\P^1_S$, hence $\kappa(\eta) \cong k''(t)$ for some finite extension $k''$ of $k'$. The field $k'$ is a finite extension of~$k$, so $k''$ is also a finite extension of $k$. Since $m_s(k) \geq n$ by assumption, by applying Corollary~\ref{m-inv of cdvf} we have $m(F_{\eta}) = 2m(\kappa(\eta)) = 2m(k''(t)) \geq 4n$. Since $q$ is anisotropic and universal over $F_{\eta}$, we conclude $\dim q \geq m(F_{\eta}) \geq 4n$.

We have shown that any anisotropic universal quadratic form over $K'(x)$ must have dimension at least $4n$, so we conclude that $m(K'(x)) \geq 4n$, as desired.
\end{proof}

\begin{proof}[Proof of Theorem \ref{strong m of cdvf}]
By Proposition \ref{lower bound on strong m}, we have $m_s(K) \geq 2m_s(k)$. By Lemma \ref{strong m bounded by strong u}, we have $m_s(K) \leq u_s(K)$. By \cite[Theorem~4.10]{hhk09}, we have $u_s(K) = 2u_s(k)$. Finally, since we assumed $m_s(k) = u_s(k)$, it follows that $m_s(K) \leq 2m_s(k)$, which completes the proof.
\end{proof}
Theorem \ref{strong m of cdvf} allows us to calculate exact values of the $m$-invariant of rational function fields in one variable over certain complete discretely valued fields, as illustrated by the following corollary.
\begin{cor}\label{some m examples}
\begin{enumerate}[label=(\alph*)]
	\item If $p$ is an odd prime, then $m(\Q_p(x)) = 8$.
	
	\item If $k$ is an algebraically closed field of characteristic $\ne 2$, then $m(k(x)((y))(z)) = 8$.
	
	\item If $p$ is an odd prime and $r \geq 1$ is any positive integer, then $m(\F_p((t_1)) \cdots ((t_r))(x)) = 2^{r+2}$.
\end{enumerate}
\end{cor}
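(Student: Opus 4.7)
The plan is to verify each equality by applying Theorem \ref{strong m of cdvf} (for $m_s$) in tandem with \cite[Theorem~4.10]{hhk09} (for $u_s$) to a tower of complete discretely valued fields, keeping the hypothesis $m_s = u_s$ alive at every step so that the theorem continues to apply. Once one reaches a complete discretely valued field $K$ for which $m_s(K) = u_s(K) = 2^n$, Corollary \ref{if strong m = strong u}(b), applied to the trivial finite extension $K'=K$, converts this directly into $m(K(x)) = u(K(x)) = 2^{n+1}$, which is exactly what each claim asks for.

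For (a), the base residue field is $\F_p$. Example \ref{strong m examples}(2) gives $m_s(\F_p) = 2$, and the known values $u(k') = 2$ and $u(k'(t)) = 4$ for every finite extension $k'/\F_p$ give $u_s(\F_p) = 2$ as well. One application of Theorem \ref{strong m of cdvf} and \cite[Theorem~4.10]{hhk09} to $\Q_p$ yields $m_s(\Q_p) = u_s(\Q_p) = 4 = 2^2$, and then Corollary \ref{if strong m = strong u}(b) gives $m(\Q_p(x)) = 2^3 = 8$. For (b), Example \ref{strong m examples}(3) supplies $m_s(k(x)) = u_s(k(x)) = 2$, so the same doubling step applied to the complete discretely valued field $k(x)((y))$ (whose residue field is $k(x)$) yields $m_s(k(x)((y))) = u_s(k(x)((y))) = 4 = 2^2$, and Corollary \ref{if strong m = strong u}(b) then gives $m(k(x)((y))(z)) = 8$.

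Part (c) is handled by induction on $r$. Starting from $m_s(\F_p) = u_s(\F_p) = 2$ as in (a), we climb the tower
\[
\F_p \;\subset\; \F_p((t_1)) \;\subset\; \F_p((t_1))((t_2)) \;\subset\; \cdots \;\subset\; \F_p((t_1))\cdots((t_r)),
\]
applying Theorem \ref{strong m of cdvf} and \cite[Theorem~4.10]{hhk09} at each step; the equality $m_s = u_s$ is preserved and both invariants double, producing $m_s(\F_p((t_1))\cdots((t_r))) = u_s(\F_p((t_1))\cdots((t_r))) = 2^{r+1}$. A final application of Corollary \ref{if strong m = strong u}(b) then delivers $m(\F_p((t_1))\cdots((t_r))(x)) = 2^{r+2}$.

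There is no serious obstacle beyond checking the base cases, since the doubling machinery for both invariants is already available; the one point that requires any care is ensuring the equality $m_s = u_s$ at the bottom of each tower, which is where the strong hypothesis of Theorem \ref{strong m of cdvf} would otherwise fail. In (a) and (c) this reduces to the classical determination of $u$ for finite fields and their rational function fields, and in (b) it is exactly Example \ref{strong m examples}(3).
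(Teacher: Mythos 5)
Your proposal is correct and follows essentially the same route as the paper: establish the base equalities $m_s = u_s$, double both invariants via Theorem \ref{strong m of cdvf} together with \cite[Theorem~4.10]{hhk09}, and finish with Corollary \ref{if strong m = strong u}(b). The only cosmetic difference is that you make explicit the invocation of \cite[Theorem~4.10]{hhk09} for the $u_s$ doubling, which the paper folds into a single line.
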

\begin{proof}
	(a) For any prime $p \ne 2$, $m_s(\F_p) = u_s(\F_p) = 2$, so $m_s(\Q_p) = 2m_s(\F_p) = 4 = u_s(\Q_p)$ by Theorem \ref{strong m of cdvf}. Hence $m(\Q_p(x)) = 8$ by Corollary \ref{if strong m = strong u}(b).
	
	(b) By Examples \ref{strong m examples}(3), $m_s(k(x)) = u_s(k(x)) = 2$. Thus $m_s(k(x)((y))) = u_s(k(x)((y))) = 4$ by Theorem \ref{strong m of cdvf}. Finally, we conclude that $m(k(x)((y))(z)) = 8$ by Corollary \ref{if strong m = strong u}(b).
	
	(c) For any prime $p \ne 2$, we have $m_s(\F_p) = u_s(\F_p) = 2$. So for any integer $r \geq 1$, applying Theorem~\ref{strong m of cdvf} inductively, we have $m_s(\F_p((t_1)) \cdots ((t_r))) = u_s(\F_p((t_1)) \cdots ((t_r))) = 2^{r+1}$. Therefore $m(\F_p((t_1)) \cdots ((t_r))(x)) = 2^{r+2}$ by Corollary~\ref{if strong m = strong u}(b).
\end{proof}
We conclude this section by calculating the $u$-invariant of a one-variable function field $F$ over a field $k$ with $m_s(k) = u_s(k) < \infty$.
\begin{prop}
\label{u-inv if m_s = u_s}
Let $k$ be a field of characteristic $\ne 2$ such that $m_s(k) = u_s(k) < \infty$. Let~$F$ be a one-variable function field over $k$. Then $u(F) = 2u_s(k)$.
\end{prop}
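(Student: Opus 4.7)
The plan is to establish the two inequalities $u(F) \leq 2u_s(k)$ and $u(F) \geq 2u_s(k)$ separately. The upper bound is immediate from the definition of the strong $u$-invariant applied to the finitely generated transcendence degree one extension $F/k$, so the substance of the proof is the lower bound. Because the hypothesis $m_s(k) = u_s(k) < \infty$ forces $u_s(k) = 2^n$ for some integer $n \geq 0$ and, by Corollary~\ref{if strong m = strong u}(a), $u(k') = 2^n$ for every finite extension $k'/k$, it suffices to exhibit an anisotropic quadratic form over $F$ of dimension $2^{n+1}$.

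My approach will be a Springer-style construction relative to a suitable discrete valuation on $F$. First I replace $k$ by its algebraic closure $k_0$ in $F$; the extension $k_0/k$ is finite since $F/k$ is finitely generated, and Lemma~\ref{strong invariants under finite extension} ensures the strong invariants are unchanged. Choose a regular projective model $C$ of $F$ over $k_0$ and any closed point $P \in C$. The local ring $\mathcal{O}_{C,P} \subseteq F$ is then a DVR whose residue field $\kappa(P)$ is a finite extension of $k_0$, and hence a finite extension of $k$, so $u(\kappa(P)) = 2^n$ by Corollary~\ref{if strong m = strong u}(a). I fix two anisotropic forms $\overline{\phi}_1, \overline{\phi}_2$ over $\kappa(P)$ of dimension $2^n$ each.

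Next, using the surjection $\mathcal{O}_{C,P} \twoheadrightarrow \kappa(P)$, I lift each entry of $\overline{\phi}_i$ to a unit of $\mathcal{O}_{C,P} \subseteq F$, obtaining forms $\phi_1, \phi_2$ over $F$ with unit entries whose residues at $P$ are $\overline{\phi}_1, \overline{\phi}_2$. Taking a uniformizer $\pi \in F$ of $\mathcal{O}_{C,P}$, I form $q = \phi_1 \perp \pi \phi_2$, a $2^{n+1}$-dimensional quadratic form over $F$. The completion $F_P$ is a complete discretely valued field with residue field $\kappa(P)$ and uniformizer $\pi$, and so Springer's Theorem applied to the decomposition $q \simeq \phi_1 \perp \pi \phi_2$ over $F_P$ yields that $q$ is anisotropic over $F_P$. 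Since $F$ embeds into $F_P$, this anisotropy descends to $F$, giving $u(F) \geq 2^{n+1} = 2u_s(k)$.

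The only conceptually delicate step is isolating a DVR in $F$ whose residue field is a finite extension of $k$, but this is supplied freely by any closed point of a projective model of $F$ over $k_0$; once this is in place, Corollary~\ref{if strong m = strong u} and Springer's Theorem finish the argument. I do not anticipate a serious obstacle, since the lifting construction and Springer's criterion used here are standard and already appear in the proof of Corollary~\ref{au of cdvf}.
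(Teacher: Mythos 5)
Your proof is correct and is essentially the paper's argument: both establish $u(F) \geq 2u_s(k)$ by picking a closed point on a regular projective model of $F$, noting that its residue field is a finite extension of $k$ and hence has $u$-invariant $u_s(k) = 2^n$ by Corollary~\ref{if strong m = strong u}(a), and then getting an anisotropic form of dimension $2^{n+1}$ over $F$ by lifting two anisotropic residue forms and applying Springer's Theorem over the completion (the paper outsources this last step to Lemma~4.9 of \cite{hhk09}, whereas you spell it out). The detour through the exact constant field $k_0$ is unnecessary, since any closed point of any projective $k$-curve automatically has residue field finite over $k$, but it does no harm.
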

\begin{proof}
The proof of this result closely mirrors that of \cite[Corollary~4.13(c)]{hhk09}.

By definition we have $u(F) \leq 2u_s(k)$. To prove the reverse inequality, let $\mathcal{X}$ be a normal (equivalently, regular) $k$-curve with function field~$F$, and choose a closed point $\xi$ on $\mathcal{X}$. The local ring $\mathcal{O}_{\mathcal{X}, \xi}$ of $\mathcal{X}$ at $\xi$ is a discrete valuation ring with fraction field $F$ and residue field $\kappa(\xi)$, which is a finite extension of $k$. By Corollary \ref{if strong m = strong u}(a) we have $u(\kappa(\xi)) = u_s(k)$. So, applying \cite[Lemma~4.9]{hhk09} to $\mathcal{O}_{\mathcal{X}, \xi}, F$, we have $u(F) \geq 2u(\kappa(\xi)) = 2u_s(k)$, giving us the desired inequality.
\end{proof}

\section{All anisotropic universal quadratic forms over a semi-global field}
\label{all universal forms}
The goal of this section is to study the set $\au(F)$ of dimensions of anisotropic universal quadratic forms over a given semi-global field $F$ (see the definition preceding Lemma \ref{anisotropic universal residue forms}). The main result of this section is Theorem \ref{au equals union}, which states that we can use information coming from a regular model of $F$ to determine $\au(F)$. We now introduce notation that will be used throughout Sections~\ref{all universal forms} and~\ref{classifying curves}.

\begin{notation}
Let $F$ be a semi-global field, and let $\mathscr{X}$ be a regular model of $F$ with closed fiber~$X$ and reduced closed fiber $X^{\text{red}}$. If $\eta$ is the generic point of an irreducible component of $X$ or $X^{\text{red}}$, then we let $\kappa(\eta)$ denote the residue field of the local ring $\mathcal{O}_{\mathscr{X}, \eta}$ of $\mathscr{X}$ at $\eta$. For a scheme $\mathscr{Y}$, a point $y \in \mathscr{Y}$, and an element $f \in \mathcal{O}_{\mathscr{Y}, y}$, we let $\overline{f}_y \in \kappa(y)$ denote the image of $f$ in the residue field $\kappa(y)$ of the local ring~$\mathcal{O}_{\mathscr{Y},y}$. Furthermore, we let $\widehat{\mathcal{O}}_{\mathscr{Y}, y}$ denote the completion of the local ring $\mathcal{O}_{\mathscr{Y}, y}$ with respect to its maximal ideal.
\end{notation}

To begin, we prove a lemma that will be used several times throughout this section.
\begin{lemma}
\label{u-inv of residue fields}
Let $T$ be a complete discrete valuation ring with residue field $k$ of characteristic~$\ne 2$ such that $m_s(k) = u_s(k) = 2^n$ for some integer $n \geq 0$. If $\eta$ is the generic point of an irreducible component of the (reduced) closed fiber of a regular connected projective $T$-curve, then $u(\kappa(\eta)) =$~$2^{n+1}$.
\end{lemma}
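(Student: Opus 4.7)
The plan is to reduce the lemma to Proposition \ref{u-inv if m_s = u_s} by exhibiting $\kappa(\eta)$ as a one-variable function field over $k$. Once that identification is made, Proposition \ref{u-inv if m_s = u_s} immediately yields $u(\kappa(\eta)) = 2u_s(k) = 2 \cdot 2^n = 2^{n+1}$.

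First I would set up the identification. Let $\mathscr{X}$ be the given regular connected projective $T$-curve and let $X$ be its closed fiber. Since $\mathscr{X}$ is a $T$-curve (flat projective of relative dimension one over $T$), $X$ is a one-dimensional projective scheme over $k = T/\mathfrak{m}_T$. Let $Y$ denote the irreducible component of $X^{\text{red}}$ whose generic point is $\eta$, endowed with the reduced induced scheme structure; then $Y$ is a one-dimensional integral projective $k$-scheme, i.e., a projective $k$-curve. Its function field is the residue field at its generic point, which equals $\kappa(\eta)$, since the residue field of a local ring is unchanged upon passing to its reduction. Hence $\kappa(\eta)$ is a finitely generated field extension of $k$ of transcendence degree one, i.e., a one-variable function field over $k$.

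With this identification in hand, I would simply apply Proposition \ref{u-inv if m_s = u_s} to the extension $\kappa(\eta)/k$. The hypothesis $m_s(k) = u_s(k) = 2^n < \infty$ places us precisely in that proposition's setup, yielding $u(\kappa(\eta)) = 2u_s(k) = 2^{n+1}$. I do not foresee any substantial obstacle: the lemma is essentially a geometric rephrasing of Proposition \ref{u-inv if m_s = u_s}, applied to the residue field at a codimension-one point lying on the closed fiber of $\mathscr{X}$.
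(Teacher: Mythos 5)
Your proposal is correct and follows the same route as the paper: identify $\kappa(\eta)$ as a finitely generated, transcendence-degree-one field extension of $k$, and then invoke Proposition \ref{u-inv if m_s = u_s} to conclude $u(\kappa(\eta)) = 2u_s(k) = 2^{n+1}$. You simply spell out the geometric identification a bit more explicitly than the paper does.
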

\begin{proof}
The field $\kappa(\eta)$ is a finitely generated field extension of transcendence degree one over $k$. Since $m_s(k) = u_s(k) = 2^n$, we have $u(\kappa(\eta)) = 2^{n+1}$ by Proposition \ref{u-inv if m_s = u_s}.
\end{proof}
The first step in proving Theorem \ref{au equals union} is proving
\begin{prop}
\label{au contained in union}
Let $T$ be a complete discrete valuation ring with fraction field $K$ and residue field $k$ of characteristic $\ne 2$ such that $m_s(k) = u_s(k) < \infty$. Let $\mathscr{X}$ be a regular connected projective $T$-curve with closed fiber $X$. Let $X_1, \ldots, X_s$ be the irreducible components of $X$, and for $1 \leq i \leq s$ let $\eta_i$ be the generic point of $X_i$. Let $\Gamma$ be the reduction graph of $\mathscr{X}$, and let $F$ be the function field of $\mathscr{X}$. Then
\[
	\emph{AU}(F) \subseteq \begin{cases}
	\{2\} \cup \bigcup_{i = 1}^s \left\{r_1 + r_2 \mid r_1, r_2 \in \emph{AU}(\kappa(\eta_i))\right\} &\text{ if $\Gamma$ is not a tree}, \\
	\bigcup_{i = 1}^s \left\{r_1 + r_2 \mid r_1, r_2 \in \emph{AU}(\kappa(\eta_i)) \right\} &\text{ if $\Gamma$ is a tree}.
	\end{cases}
\]
\end{prop}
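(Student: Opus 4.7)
The plan is to take an anisotropic universal quadratic form $q$ over $F$ of dimension $d$ and show $d$ lies in the displayed set by localizing $q$ on a normal crossings $q$-model. The hypothesis, together with Corollary \ref{if strong m = strong u}, gives $m_s(k) = u_s(k) = 2^n$ for some $n \ge 0$, and forces $\au(k') = \{2^n\}$ for every finite extension $k'/k$ and $\au(k'(t)) = \{2^{n+1}\}$ for every such $k'$; these collapsed $\au$-sets will be the main numerical input.

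The case $d = 2$ is immediate: such a form exists exactly when $m(F) = 2$, and Proposition \ref{m=2 iff loop} then forces $\Gamma$ to be not a tree, so $d \in \{2\}$ lies in the displayed set. When $\Gamma$ is a tree this case is vacuous.

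For $d \ge 3$, I would pass to a normal crossings $q$-model $\mathscr{X}' \to \mathscr{X}$ via Remark \ref{normal crossings q-models exist}; the reduction graph is unchanged. Lemma \ref{universal over F implies universal over F_P} shows that $q$ remains universal over every $F_P$ for $P$ in the closed fiber $X'$ of $\mathscr{X}'$, while \cite[Theorem~9.3]{hhk15} produces a point $P_* \in X'$ over which $q$ stays anisotropic. Hence $q$ is anisotropic and universal over $F_{P_*}$, and the analysis branches on the type of $P_*$. The components of $X'$ are either strict transforms of the original $X_i$ (with unchanged generic residue field $\kappa(\eta_i)$) or exceptional divisors introduced by the blow-up, each birational to $\P^1$ over a finite extension of $k$ and so with generic residue field of the form $k'(t)$. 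If $P_*$ is the generic point of the strict transform of some $X_i$, Corollary \ref{au of cdvf} directly gives $d \in \au(F_{P_*}) = \{r_1 + r_2 \mid r_1, r_2 \in \au(\kappa(\eta_i))\}$, and we are done. If $P_*$ is the generic point of an exceptional divisor, Corollary \ref{au of cdvf} combined with $\au(k'(t)) = \{2^{n+1}\}$ forces $d = 2^{n+2}$. If $P_*$ is a closed point, I scale $q$ by a monomial $x^r y^s$ (preserving anisotropy, universality, and dimension) to arrange the hypothesis of Lemma \ref{forms of a prescribed form}, apply part (b) of that lemma to see that $q$ is anisotropic and universal over $F_{P_*,y}$, and then invoke Corollary \ref{au of cdvf} twice together with $\au(\kappa(P_*)) = \{2^n\}$ to force $d = 2^{n+2}$ again.

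It remains to realize $d = 2^{n+2}$ as $r_1 + r_2$ with $r_1, r_2 \in \au(\kappa(\eta_j))$ for some $j$. By Lemma \ref{u-inv of residue fields}, $u(\kappa(\eta_j)) = 2^{n+1}$ for every $j$, and any anisotropic form of top dimension is automatically universal, so $2^{n+1} \in \au(\kappa(\eta_j))$ for any $j$; thus $d = 2^{n+1} + 2^{n+1}$ is in the required set. The main obstacle I anticipate is precisely this last reduction: the witness point $P_*$ on the blow-up $\mathscr{X}'$ may lie on an exceptional component or be closed, so its residue field is not among the $\kappa(\eta_j)$. It is the hypothesis $m_s(k) = u_s(k)$ that collapses every relevant $\au$-set to a single value and rigidly forces $d = 2^{n+2}$, enabling the rewriting through any $\kappa(\eta_j)$.
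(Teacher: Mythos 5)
Your proposal is correct and follows essentially the same route as the paper: reduce to $\dim q \geq 3$, pass to a normal crossings $q$-model via Remark \ref{normal crossings q-models exist}, use Lemma \ref{universal over F implies universal over F_P} to propagate universality and \cite[Theorem~9.3]{hhk15} to extract a witness point $P_*$ where $q$ stays anisotropic, then case on the type of $P_*$. The one minor difference is in the closed-point case: the paper applies Corollary \ref{m of F_P for P closed} to get the lower bound $\dim q \geq 4m(\kappa(P_*)) = 2^{n+2}$ and separately invokes $u(F) = 2^{n+2}$ from Proposition \ref{u-inv if m_s = u_s} to get the matching upper bound, whereas you instead observe that $\au(\kappa(P_*)) = \{2^n\}$ forces $\au(F_{P_*,y}) = \{2^{n+2}\}$ by two applications of Corollary \ref{au of cdvf}, pinning $\dim q$ without a separate appeal to $u(F)$. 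Both work; yours is marginally more self-contained at that step.
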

\begin{proof}
If $\Gamma$ is not a tree, then by Lemma \ref{loop implies m=2}, $m(F) = 2 \in \au(F)$. If $\Gamma$ is a tree, then $m(F) > 2$, so $2 \not\in \au(F)$. To complete the proof, we must show that if $q$ is any anisotropic universal quadratic form over~$F$ with $\dim q \geq 3$, then $\dim q \in \left\{r_1 + r_2 \mid r_1, r_2 \in \au(\kappa(\eta_i))\right\}$ for some $i \in \{1, \ldots, s\}$.

We first note that since $m_s(k) = u_s(k) < \infty$, Corollary~\ref{if strong m = strong u} implies that $m_s(k) = u_s(k) = 2^n$ for some integer $n \geq 0$. Thus $u(\kappa(\eta_i)) = 2^{n+1}$ for all $i = 1, \ldots, s$ by Lemma \ref{u-inv of residue fields}.

Let $q$ be any anisotropic universal quadratic form over $F$ with $\dim q \geq 3$. Because $\Char F \ne 2$, we may assume that $q$ is a diagonal form $\langle a_1, \ldots, a_d \rangle$ with each $a_i \in F^{\times}$. By taking a suitable blow-up $\mathscr{X}' \to \mathscr{X}$ we obtain a normal crossings $q$-model $\mathscr{X}'$ of $F$
with closed fiber $X'$ (see Definition \ref{normal crossings q-model} and Remark \ref{normal crossings q-models exist}).

Because $q$ is universal over $F$ and because of our choice of model $\mathscr{X}'$, it follows that $q$ is universal over~$F_P$ for all points $P \in X'$ by Lemma \ref{universal over F implies universal over F_P}. Moreover, since $\dim q \geq 3$ and $q$ is anisotropic over~$F$, by \cite[Theorem~9.3]{hhk15} there must be a point $P_{*} \in X'$ such that $q$ is anisotropic (and universal) over $F_{P_{*}}$.

First suppose that $P_*$ is a closed point. Then because $q$ is anisotropic and universal over~$F_{P_*}$, by Corollary \ref{m of F_P for P closed} we have $\dim q \geq 4m(\kappa(P_*))$. The field $\kappa(P_*)$ is a finite extension of~$k$, thus $m(\kappa(P_*)) = 2^n$ by Corollary \ref{if strong m = strong u}(a). Hence $\dim q \geq 2^{n+2}$. Since $m_s(k) = u_s(k) = 2^n$, we have $m_s(K) = u_s(K) = 2^{n+1}$ by Theorem \ref{strong m of cdvf}. The field $F$ is a one-variable function field over $K$, so $u(F) = 2^{n+2}$ by Proposition \ref{u-inv if m_s = u_s}, and since $q$ is anisotropic over $F$ with $\dim q \geq u(F) = 2^{n+2}$, we conclude $\dim q = 2^{n+2}$. Now, for all $i = 1, \ldots, s$, as we observed above, $2^{n+1} \in \au(\kappa(\eta_i))$. Therefore
\[
	\dim q = 2^{n+1} + 2^{n+1} \in \left\{r_1 + r_2 \mid r_1, r_2 \in \au(\kappa(\eta_i))\right\}
\]
for all $i = 1, \ldots, s$, proving the claim if $P_* \in X'$ is a closed point.

Now suppose that the point $P_* \in X'$ is the generic point $\xi$ of an irreducible component $X'_{\xi}$ of~$X'$. The form $q$ is anisotropic and universal over $F_{P_*} = F_{\xi}$, so $\dim q \in \au(F_{\xi})$. In this case, $F_{\xi}$ is a complete discretely valued field with residue field $\kappa(\xi)$, so by Corollary \ref{au of cdvf},
\[
	\au(F_{\xi}) = \left\{r_1 + r_2 \mid r_1, r_2 \in \au(\kappa(\xi))\right\}.
\]
Since $\mathscr{X}'$ is a blow-up of the regular $T$-curve $\mathscr{X}$, the irreducible component $X'_{\xi}$ is either an exceptional divisor, in which case $X'_{\xi} \simeq \mathbb{P}_{k'}^1$ for some finite extension $k'$ of $k$ by, e.g., \cite[Theorem~9.3.8]{liu}, or $X'_{\xi}$ is birational to an irreducible component $X_i$ of $X$.

If $X_{\xi}' \simeq \mathbb{P}^1_{k'}$, then $\kappa(\xi) \cong k'(t)$. The field $k'$ is a finite extension of $k$, and since $m_s(k) = u_s(k) =$~$2^n$, we have $m(k'(t)) = u(k'(t)) = 2^{n+1}$ by Corollary \ref{if strong m = strong u}(b). This implies that $m(F_{\xi}) = u(F_{\xi}) = 2^{n+2}$, and as we saw above, $2^{n+2} = 2u(\kappa(\eta_i))$ for all $i = 1, \ldots, s$. So in this case,
\[
	\au(F_{\xi}) = \left\{2^{n+2}\right\} \subseteq \left\{r_1 + r_2 \mid r_1, r_2 \in \au\left(\kappa(\eta_i)\right)\right\}
\]
for all $i = 1, \ldots, s$.

If $X'_{\xi}$ is birational to an irreducible component $X_i$ of $X$, then $\kappa(\xi) \cong \kappa(\eta_i)$. This implies that $\au(\kappa(\xi)) = \au(\kappa(\eta_i))$. Therefore $\au(F_{\xi}) = \left\{r_1 + r_2 \mid r_1, r_2 \in \au(\kappa(\eta_i))\right\}$ by Corollary \ref{au of cdvf}.

In either case of $X_{\xi}'$, we have shown that $\dim q \in \au(F_{\xi}) \subseteq \left\{r_1 + r_2 \mid r_1, r_2 \in \au(\kappa(\eta_i))\right\}$ for some $i \in \{1, \ldots, s\}$, so the proof is complete.
\end{proof}

The next step in proving Theorem \ref{au equals union} is showing that the containment opposite to the one in Proposition \ref{au contained in union} holds. This step is constructive, and one of the key ingredients is Proposition \ref{lifting universal forms on nice closed fiber}. Proposition \ref{lifting universal forms on nice closed fiber} proves the existence of specified lifts of anisotropic universal quadratic forms from the reduced closed fiber of a regular curve over a complete discrete valuation ring to the curve itself, and relies on several preliminary results.

\begin{lemma}
\label{universal form on component of closed fiber}
Let $k$ be a field of characteristic $\ne 2$ such that $m_s(k) = u_s(k) = 2^n$ for some integer $n \geq 1$. Let $X$ be a regular irreducible $k$-curve with function field $K$, and let $q$ be a universal quadratic form over $K$ with $2 \leq \dim q < 2^{n+1}$. Then for all closed points $P \in X$, $q$ is isotropic over the completion $K_{v_P}$ of $K$ with respect to the discrete valuation $v_P$ induced on $K$ by~$P$.
\end{lemma}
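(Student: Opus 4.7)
The plan is to argue by contradiction, exploiting that the hypothesis $m_s(k) = u_s(k) = 2^n$ pins down both the $m$-invariant and the $u$-invariant of every finite extension of $k$, and hence severely constrains the possible dimensions of anisotropic universal forms over the complete discretely valued field $K_{v_P}$.

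Fix a closed point $P \in X$. Since $X$ is a regular irreducible $k$-curve, the residue field $\kappa(P)$ is a finite extension of $k$, and the local ring $\mathcal{O}_{X,P}$ is a discrete valuation ring with fraction field $K$; thus $K_{v_P}$ is a complete discretely valued field with residue field $\kappa(P)$. First I would invoke Lemma \ref{globally universal implies locally universal} to conclude that $q$, being universal over $K$, is also universal over $K_{v_P}$.

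Suppose for contradiction that $q$ is anisotropic over $K_{v_P}$. Then $q$ is an anisotropic universal quadratic form over $K_{v_P}$, so $\dim q \in \au(K_{v_P})$. By Corollary \ref{au of cdvf}, $\au(K_{v_P}) = \{r_1 + r_2 \mid r_1, r_2 \in \au(\kappa(P))\}$. The next step is to pin down $\au(\kappa(P))$. Since $\kappa(P)/k$ is a finite extension and $m_s(k) = u_s(k) = 2^n$, Corollary \ref{if strong m = strong u}(a) yields $m(\kappa(P)) = u(\kappa(P)) = 2^n$. Every anisotropic universal form over $\kappa(P)$ has dimension at least $m(\kappa(P)) = 2^n$ and at most $u(\kappa(P)) = 2^n$, so $\au(\kappa(P)) = \{2^n\}$, and therefore $\au(K_{v_P}) = \{2^{n+1}\}$. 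This forces $\dim q = 2^{n+1}$, contradicting the hypothesis $\dim q < 2^{n+1}$. Hence $q$ must be isotropic over $K_{v_P}$, as claimed.

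There is no serious obstacle here, since the heavy lifting has already been done in Sections \ref{preliminaries} and \ref{strong m}. What is worth emphasizing is that the hypothesis $m_s(k) = u_s(k) = 2^n$ is used in its full strength, because we need both the sharp upper bound $u(\kappa(P)) = 2^n$ (without which $\au(\kappa(P))$ might contain larger values) and the matching lower bound $m(\kappa(P)) = 2^n$ (without which $\au(\kappa(P))$ might contain smaller values, possibly allowing a sum strictly less than $2^{n+1}$). The assumption $n \geq 1$ merely ensures that the range $2 \leq \dim q < 2^{n+1}$ is non-empty.
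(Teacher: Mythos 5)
Your proof is correct and takes essentially the same route as the paper: both invoke Lemma \ref{globally universal implies locally universal} to pass universality to $K_{v_P}$, then pin down $m(\kappa(P)) = 2^n$ via Corollary \ref{if strong m = strong u}(a), and conclude using the behavior of anisotropic universal forms over a complete discretely valued field. The paper simply argues directly through $m(K_{v_P}) = 2m(\kappa(P)) = 2^{n+1}$ (Corollary \ref{m-inv of cdvf}) without the contradiction framing, whereas you detour through $\au(K_{v_P})$ via Corollary \ref{au of cdvf}; these are cosmetically different presentations of the same argument, since Corollary \ref{m-inv of cdvf} is itself a consequence of Corollary \ref{au of cdvf}. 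One small inaccuracy in your closing remark: the upper bound $u(\kappa(P)) = 2^n$ is not actually needed, even in your own version. To rule out $\dim q < 2^{n+1}$ it suffices to know that every $r_1, r_2 \in \au(\kappa(P))$ satisfies $r_1, r_2 \geq m(\kappa(P)) \geq 2^n$, hence $r_1 + r_2 \geq 2^{n+1}$; the exact value of $u(\kappa(P))$, and hence the full equality $\au(\kappa(P)) = \{2^n\}$, is never used.
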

\begin{proof}
Since $q$ is universal over $K$, for all closed points $P \in X$, $q$ must also be universal over~$K_{v_P}$ by Lemma \ref{globally universal implies locally universal}. For each closed point $P \in X$, the field $K_{v_P}$ is a complete discretely valued field whose residue field $\kappa(P)$ is a finite extension of $k$. Since $m_s(k) = u_s(k) = 2^n$, Corollary~\ref{if strong m = strong u}(a) implies that $m(\kappa(P)) = 2^n$. Therefore $m(K_{v_P}) = 2m(\kappa(P)) = 2^{n+1}$ by Corollary \ref{m-inv of cdvf}. Since $q$ is universal over $K_{v_P}$ with $\dim q < 2^{n+1} = m(K_{v_P})$, it follows that $q$ must be isotropic over $K_{v_P}$.
\end{proof}

The next result is analogous to the Chinese Remainder Theorem. For a field $K$ equipped with a non-trivial discrete valuation $v$, we let $\mathcal{O}_v$ denote the valuation ring of $v$ and let $\kappa_v$ denote the residue field of $\mathcal{O}_v$. 
\begin{lemma}
\label{lifts with prescribed residue}
Let $K$ be a field, and let $v_1, \ldots, v_n$ be non-trivial pairwise non-equivalent discrete valuations on $K$. For some $1 \leq j \leq n$, let $a_1 \in \kappa_{v_1}^{\times}, a_2 \in \kappa_{v_2}^{\times}, \ldots, a_j \in \kappa_{v_j}^{\times}$ be given. Then there exist elements $\widetilde{a}, \widetilde{b} \in K^{\times}$ such that $\widetilde{a} \in \mathcal{O}_{v_i}^{\times}$ for all $i = 1, \ldots, n$, $\widetilde{b} \in \mathcal{O}_{v_i}^{\times}$ for all $i = 1, \ldots, j$ and $v_i(\widetilde{b}) = 1$ for all $j < i \leq n$, and for each $i = 1, \ldots, j$ the images of $\widetilde{a}$ and $\widetilde{b}$ in $\kappa_{v_i}^{\times}$ are $a_i$.
\end{lemma}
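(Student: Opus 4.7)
The plan is to reduce everything to a single application of the weak approximation theorem for pairwise inequivalent non-trivial discrete valuations on $K$. Recall that this theorem states: given such valuations $v_1, \ldots, v_n$, elements $c_1, \ldots, c_n \in K$, and integers $N_1, \ldots, N_n$, there is some $c \in K$ with $v_i(c - c_i) \geq N_i$ for every $i$. I will construct $\widetilde{a}$ and $\widetilde{b}$ separately, each time specifying a target element $c_i \in K$ at each $v_i$ and applying approximation with $N_i$ large enough that the residue or valuation of the approximant is forced.

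For $\widetilde{a}$, I would start by choosing, for each $i = 1, \ldots, j$, a lift $\alpha_i \in \mathcal{O}_{v_i}^{\times}$ of $a_i \in \kappa_{v_i}^{\times}$, and setting $\alpha_i = 1$ for $j < i \leq n$. Applying weak approximation with $N_i = 1$ for each $i$, I obtain $\widetilde{a} \in K$ satisfying $v_i(\widetilde{a} - \alpha_i) \geq 1$ for all $i$. Since $v_i(\alpha_i) = 0$ for every $i$, the ultrametric inequality forces $v_i(\widetilde{a}) = 0$, so $\widetilde{a} \in \mathcal{O}_{v_i}^{\times}$ for all $i$; in particular $\widetilde{a} \in K^{\times}$. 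Moreover $\widetilde{a} \equiv \alpha_i \pmod{\mathfrak{m}_{v_i}}$, so the image of $\widetilde{a}$ in $\kappa_{v_i}^{\times}$ is $a_i$ for each $i = 1, \ldots, j$.

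For $\widetilde{b}$, I reuse the lifts $\alpha_i$ for $i \leq j$, and for each $j < i \leq n$ I choose a uniformizer $\pi_i \in K$ with $v_i(\pi_i) = 1$ (possible since the $v_i$ are discrete on $K$). Applying weak approximation with $N_i = 1$ for $i \leq j$ and $N_i = 2$ for $i > j$, I obtain $\widetilde{b} \in K$ with $v_i(\widetilde{b} - \alpha_i) \geq 1$ when $i \leq j$ and $v_i(\widetilde{b} - \pi_i) \geq 2$ when $i > j$. For $i \leq j$ the same argument as before gives $\widetilde{b} \in \mathcal{O}_{v_i}^{\times}$ with residue $a_i$. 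For $i > j$, since $v_i(\pi_i) = 1 < 2 \leq v_i(\widetilde{b} - \pi_i)$, the ultrametric inequality yields $v_i(\widetilde{b}) = v_i(\pi_i) = 1$, as required. In particular $\widetilde{b} \neq 0$, so $\widetilde{b} \in K^{\times}$.

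There is no real obstacle here; the only point that requires a line of care is the ultrametric computation confirming that the approximants have exactly the prescribed valuations (rather than larger ones), which is exactly why I take $N_i = 2$ at the valuations where I want $v_i(\widetilde{b}) = 1$ and $N_i = 1$ where I want the valuation to be $0$. Once this is noted, the lemma follows immediately from weak approximation.
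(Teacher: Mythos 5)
Your proof is correct and takes essentially the same route as the paper: both reduce the claim to the approximation theorem for pairwise inequivalent valuations (the paper cites the precise-valuation version in Zariski--Samuel, while you use the more commonly stated $\geq$-form of weak approximation and recover the exact valuations via the ultrametric inequality, a minor repackaging).
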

\begin{proof}
For each $i = 1, \ldots, j$ let $\widetilde{a}_i \in \mathcal{O}_{v_i}^{\times}$ be a unit lift of $a_i$ to $K$. Then by \cite[Chapter VI, Theorem~18]{zar-sam}, there are elements $\widetilde{a}, \widetilde{b} \in K^{\times}$ such that
\[
	v_i\left(\widetilde{a} - \widetilde{a}_i\right) = v_i(\widetilde{b} - \widetilde{a}_i) = 1 \text{ for all $i = 1, \ldots, j$, and } v_i\left(\widetilde{a}\right) = 0, v_i(\widetilde{b}) = 1 \text{ for all $j < i \leq n$}.
\]
The element $\widetilde{a}$ satisfies $v_i(\widetilde{a}) = 0$ for all $i = 1, \ldots, n$, thus $\widetilde{a} \in \mathcal{O}_{v_i}^{\times}$ for all $i = 1, \ldots, n$. Furthermore, the element $\widetilde{b}$ satisfies $v_i(\widetilde{b}) = 0$ (hence $\widetilde{b} \in \mathcal{O}_{v_i}^{\times}$) for all $i = 1, \ldots, j$, and $v_i(\widetilde{b}) = 1$ for all $j < i \leq n$. Next, for each $i = 1, \ldots, j$, since $\widetilde{a}, \widetilde{b} \in \mathcal{O}_{v_i}^{\times}$ and $v_i(\widetilde{a} - \widetilde{a}_i) = v_i(\widetilde{b} - \widetilde{a}_i) = 1$, the images of $\widetilde{a}$, $\widetilde{b}$, and~$\widetilde{a}_i$ in $\kappa_{v_i}^{\times}$ must all agree. So for each $i = 1, \ldots, j$ the images of $\widetilde{a}$ and $\widetilde{b}$ in $\kappa_{v_i}^{\times}$ are $a_i$.
\end{proof}

The next several results (Lemma \ref{particular isotropic vector} -- Corollary \ref{surjective map on global sections}) are used to construct certain lifts of universal quadratic forms on the reduced closed fiber of a regular model of a semi-global field.
\begin{lemma}
\label{particular isotropic vector}
Let $q = \langle a_1, \ldots, a_n \rangle$ be a regular quadratic form over a field $k$ of characteristic $\ne 2$. If $q$ is isotropic, then there is an isotropic vector $x = (x_1, \ldots, x_n)$ for $q$ with $x_1 = 1$.
\end{lemma}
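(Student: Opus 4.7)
The plan is to exploit the existence of \emph{some} isotropic vector for $q$ and then modify it so that its first coordinate is $1$. Because $q$ is isotropic, we can choose a nonzero vector $v = (v_1, \ldots, v_n) \in k^n$ with $\sum_{i=1}^n a_i v_i^2 = 0$. I would then split into two cases depending on whether $v_1$ vanishes.

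In the easy case, if $v_1 \neq 0$, then since $\Char k \neq 2$ we may rescale: the vector $v_1^{-1} \cdot v = (1, v_2/v_1, \ldots, v_n/v_1)$ is again isotropic for $q$, and its first coordinate is $1$, as required.

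In the remaining case $v_1 = 0$, the tuple $(v_2, \ldots, v_n)$ is a nonzero isotropic vector for the subform $q' = \langle a_2, \ldots, a_n \rangle$, so $q'$ is itself isotropic (and regular). An isotropic regular quadratic form over a field of characteristic $\ne 2$ is universal (this is \cite[Theorem~I.3.4(3)]{lam}, already invoked in the introduction of the paper), so $q'$ represents every element of $k^{\times}$; in particular there exist $y_2, \ldots, y_n \in k$ with $\sum_{i=2}^n a_i y_i^2 = -a_1$. Then $(1, y_2, \ldots, y_n)$ satisfies $a_1 \cdot 1^2 + \sum_{i=2}^n a_i y_i^2 = 0$, giving the desired isotropic vector whose first coordinate is $1$.

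There is no real obstacle here: the only subtlety is the degenerate case $v_1 = 0$, which is handled by replacing the abstract existence statement ``$q$ is isotropic'' with the stronger ``$q$ represents $-a_1$,'' applied to an appropriate subform. The argument uses nothing beyond diagonalizability in characteristic $\ne 2$ and the basic fact that an isotropic regular form is universal.
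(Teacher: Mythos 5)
Your proof is correct and follows essentially the same approach as the paper's: rescale if the first coordinate is nonzero, and in the remaining case observe that the subform $\langle a_2, \ldots, a_n \rangle$ is regular and isotropic (hence universal) and use this to represent $-a_1$.
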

\begin{proof}
Let $x' = (x_1', \ldots, x_n')$ be any isotropic vector for $q$. If $x_1' \ne 0$, then let $x = (1, \frac{x_2'}{x_1'}, \ldots, \frac{x_n'}{x_1'} )$. If $x_1' = 0$, then $q' = \langle a_2, \ldots, a_n \rangle$ is regular and isotropic, hence universal. So there exist $x_2, \ldots, x_n \in k$ such that $q'(x_2, \ldots, x_n) = -a_1$, and $x = (1, x_2, \ldots, x_n)$ is our desired isotropic vector.
\end{proof}

\begin{lemma}
\label{represents 1 over residue field}
Let $k$ be a field of characteristic $\ne 2$, and let $X$ be a regular irreducible $k$-curve with function field $K$. Let $\mathcal{P}$ be a non-empty finite set of closed points of $X$, and for each $P \in \mathcal{P}$ let $\pi_P \in K^{\times}$ be a uniformizer for the discrete valuation $v_P$ induced on $K$ by $P$. Suppose we are given elements $a_2, \ldots, a_d \in K^{\times}$ such that
\begin{itemize}
	\item for each $P \in \mathcal{P}$ and each $i = 2, \ldots, d$ there is some $a_{i,P}' \in \mathcal{O}_{X,P}^{\times}$ such that either $a_i = a_{i,P}'$ or $a_i = \pi_P a_{i,P}'$ (i.e., $v_P(a_i) \in \{0, 1\}$), and
	
	\item the quadratic form over $K$ defined by $q =  \langle -1, a_2, \ldots, a_d \rangle$ is isotropic over $K_{v_P}$ for all $P \in \mathcal{P}$.
\end{itemize}
Then for each $P \in \mathcal{P}$ there are elements $\overline{x}_{2, P}, \ldots, \overline{x}_{d, P} \in \kappa(P)$ such that $\sum_{i = 2}^d \overline{a}_{i,P}' \overline{x}_{i,P}^2 = 1$.
\end{lemma}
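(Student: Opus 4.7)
The plan is to prove the statement one closed point at a time by putting $q = \langle -1, a_2, \ldots, a_d\rangle$ into Springer form over the complete discretely valued field $K_{v_P}$ and reading off the desired representation of $1$ from Springer's Theorem. Fix $P \in \mathcal{P}$, and using the hypothesis $v_P(a_i) \in \{0,1\}$, partition $\{2,\ldots,d\}$ as $I_0 \cup I_1$, where $I_0 = \{i : a_i = a_{i,P}'\}$ and $I_1 = \{i : a_i = \pi_P a_{i,P}'\}$. Over $K_{v_P}$ this yields
\[
q \simeq \langle -1 \rangle \perp \langle a_{i,P}' \mid i \in I_0 \rangle \perp \pi_P \cdot \langle a_{i,P}' \mid i \in I_1 \rangle,
\]
where every displayed entry is a unit in the valuation ring of $K_{v_P}$; note that $-1$ belongs to the unit block.

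Next, I would apply Springer's Theorem. Since $\kappa(P)$ is a finite extension of $k$ and $\mathrm{char}\, k \ne 2$, Springer's Theorem applies to the displayed decomposition, and the hypothesis that $q$ is isotropic over $K_{v_P}$ forces at least one of the two residue forms
\[
\varphi_0 := \langle -1, \overline{a}_{i,P}' \mid i \in I_0 \rangle, \qquad \varphi_1 := \langle \overline{a}_{i,P}' \mid i \in I_1 \rangle
\]
to be isotropic over $\kappa(P)$. In the first case the First Representation Theorem produces $\overline{z}_i \in \kappa(P)$ with $\sum_{i \in I_0} \overline{a}_{i,P}' \overline{z}_i^{\,2} = 1$, and I would set $\overline{x}_{i,P} = \overline{z}_i$ for $i \in I_0$ and $\overline{x}_{i,P} = 0$ for $i \in I_1$. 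In the second case $\varphi_1$ is a regular isotropic form over $\kappa(P)$ (regular because each $\overline{a}_{i,P}' \in \kappa(P)^{\times}$), hence universal, so it represents $1$; the witnessing coefficients on $I_1$, together with zeros on $I_0$, again produce the required identity $\sum_{i=2}^d \overline{a}_{i,P}' \overline{x}_{i,P}^{\,2} = 1$.

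I do not anticipate a genuine obstacle here: the lemma is essentially a bookkeeping reformulation of Springer's Theorem tailored to the specific valuation profile of $q$. The only minor points that need attention are (i) grouping $-1$ into the unit block so that Springer's decomposition matches the statement exactly, and (ii) handling the degenerate cases $I_0 = \emptyset$ or $I_1 = \emptyset$. The second of these is automatic: if $I_0 = \emptyset$ then $\varphi_0 = \langle -1 \rangle$ is one-dimensional and anisotropic, so Springer's Theorem forces $\varphi_1$ to be isotropic; and if $I_1 = \emptyset$ then $\varphi_1$ is empty and the alternative $\varphi_0$ must itself be isotropic, so the first case of the dichotomy applies.
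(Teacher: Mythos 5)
Your proof is correct and follows essentially the same route as the paper: decompose $q$ into Springer form over $K_{v_P}$ with $-1$ grouped into the unit block, apply Springer's Theorem, and split into the two cases according to which residue form is isotropic. The only cosmetic difference is that in the first case the paper invokes its auxiliary Lemma \ref{particular isotropic vector} to produce an isotropic vector with first coordinate $1$, while you appeal directly to the First Representation Theorem; these are interchangeable here and yield the same representation of $1$.
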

\begin{proof}
Let $P \in \mathcal{P}$ be given, and for $j = 0, 1$, let $I_{j,P} = \left\{i \in \{2, \ldots, d\} \mid v_P(a_i) = j\right\}$. By our assumptions on the entries of $q$, we have $q = q_{1,P} \perp \pi_P \cdot q_{2,P}$, where the entries of $q_{1,P}$ are $-1$ and~$a_{i,P}'$ for $i \in I_{0,P}$, and the entries of $q_{2,P}$ are those $a_{i,P}'$ such that $i \in I_{1,P}$. In particular, all of the entries of $q_{1,P}$ and $q_{2,P}$ are units in $\mathcal{O}_{X,P}$.

By assumption, the form $q$ is isotropic over $K_{v_P}$. So by Springer's Theorem, one of the residue forms of $q$ must be isotropic over $\kappa(P)$. If $\overline{q}_{1,P} = \langle -1 \rangle \perp \overline{q}_{1,P}'$ is isotropic over $\kappa(P)$, then by Lemma~\ref{particular isotropic vector} we can find elements $\overline{x}_{i,P} \in \kappa(P)$ for $i \in I_{0,P}$ such that $\sum_{i \in I_{0,P}} \overline{a}_{i,P}' \overline{x}_{i,P}^2 = 1$. If we let $\overline{x}_{i,P} = 0$ for all $i \in I_{1,P}$, then we have $\sum_{i = 2}^d \overline{a}_{i,P}' \overline{x}_{i,P}^2 = 1$, as desired.

If $\overline{q}_{1,P}$ is anisotropic, then $\overline{q}_{2,P}$ is regular and isotropic, hence universal over $\kappa(P)$. Therefore~$\overline{q}_{2,P}$ represents 1, so there exist elements $\overline{x}_{i,P} \in \kappa(P)$ for $i \in I_{1,P}$ such that $\sum_{i \in I_{1,P}} \overline{a}_{i,P}' \overline{x}_{i,P}^2 =$~$1$. Letting $\overline{x}_{i,P} = 0$ for all $i \in I_{0,P}$, we again have $\sum_{i = 2}^d \overline{a}_{i,P}' \overline{x}_{i,P}^2 = 1$. 
\end{proof}

\begin{prop}
\label{defining functions on components}
Let $k$ be a field of characteristic $\ne 2$. Let $X$ be a connected curve over $k$ that is the union of $s \geq 2$ regular irreducible $k$-curves $X_1, \ldots, X_s$, with normal crossings. For $1 \leq i \leq s$ let~$\eta_i$ be the generic point of $X_i$, let $\kappa(\eta_i)$ be the function field of $X_i$, and let $\mathcal{P}_i$ be the non-empty finite set of closed points $P \in X_i$ such that $P \in X_i \cap X_j$ for some $j \ne i$. Suppose we are given $a_{1,2}, \ldots, a_{1,d} \in$~$\kappa(\eta_1)^{\times}$ such that $v_P(a_{1,r}) \in \{0,1\}$ for all $r = 2, \ldots, d$ and the quadratic form $q_1 = $~$\langle -1, a_{1,2}, \ldots, a_{1,d} \rangle$ is isotropic over $\kappa(\eta_1)_{v_P}$ for all $P \in \mathcal{P}_1$. Then for each $i = 2, \ldots, s$ there are elements $a_{i,2}, \ldots, a_{i,d} \in \kappa(\eta_i)^{\times}$ such that
\begin{enumerate}[label=(\alph*)]
	\item the quadratic form $q_i = \langle -1, a_{i,2}, \ldots, a_{i,d} \rangle$ is isotropic over $\kappa(\eta_i)$,
	
	\item if $P \in \mathcal{P}_i \cap \mathcal{P}_j$, then $\overline{a}_{i,r,P} = \overline{a}_{j,r,P} \in \kappa(P)$ for all $r = 2, \ldots, d$,
	
	\item if $P \in \mathcal{P}_i \cap \mathcal{P}_j$ for $i,j \ne 1$, then $a_{i,r} \in \mathcal{O}_{X_i, P}^{\times}$ and $a_{j,r} \in \mathcal{O}_{X_j,P}^{\times}$ for all $r = 2, \ldots, d$.
\end{enumerate}
\end{prop}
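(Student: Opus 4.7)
The plan is to construct, for each $i \in \{2, \ldots, s\}$, the elements $a_{i,2}, \ldots, a_{i,d}$ simultaneously with an isotropy vector $(y_{i,1}, \ldots, y_{i,d}) \in \kappa(\eta_i)^d$ for $q_i$. Weak approximation in $\kappa(\eta_i)$ (Lemma \ref{lifts with prescribed residue}) will realize any residue data prescribed at the finite set $\mathcal{P}_i$, while the hypothesis on $q_1$ together with Lemma \ref{represents 1 over residue field} will furnish the needed residue-level isotropy witnesses at points of $\mathcal{P}_1 \cap \mathcal{P}_i$; once the $y_{i,r}$ and the $a_{i,r}$ for $r \geq 3$ are in hand, $a_{i,2}$ will be forced by the isotropy equation.

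First I would coordinate residues between components. For each $r \in \{2, \ldots, d\}$ and each closed point $P$ on two or more components of $X$, set $\alpha_{r,P} := \overline{a}_{1,r,P}$ (possibly zero) if $P \in \mathcal{P}_1$, and $\alpha_{r,P} := 1$ otherwise; prescribing $\alpha_{r,P}$ independently of which component is being considered immediately yields (b), and the nonzero choice at non-$X_1$ intersections yields (c). Next, at each $P \in \mathcal{P}_i$, I would select residue-level witnesses $\overline{y}_{r,P} \in \kappa(P)$: at $P \in \mathcal{P}_1 \cap \mathcal{P}_i$ apply Lemma \ref{represents 1 over residue field} to $q_1$ to obtain $\overline{x}_{r,P}$ with $\sum_{r=2}^{d} \overline{a}_{1,r,P}' \overline{x}_{r,P}^{2} = 1$ and set $\overline{y}_{1,P} = 1$, $\overline{y}_{r,P} = \overline{x}_{r,P}$ for $r \geq 2$; at the remaining $P \in \mathcal{P}_i$ use the visible isotropy of $\langle -1, 1, \ldots \rangle$ by setting $\overline{y}_{1,P} = \overline{y}_{2,P} = 1$ and $\overline{y}_{r,P} = 0$ for $r \geq 3$. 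Lemma \ref{lifts with prescribed residue} then produces elements $a_{i,r} \in \kappa(\eta_i)^{\times}$ for $r = 3, \ldots, d$ and $y_{i,r} \in \kappa(\eta_i)^{\times}$ for $r = 1, \ldots, d$ realizing these residues (using the valuation-$1$ version of the lemma wherever a residue is forced to be zero). Finally, I would define
\[
	a_{i,2} \;:=\; \frac{y_{i,1}^{2} - \sum_{r=3}^{d} a_{i,r}\, y_{i,r}^{2}}{y_{i,2}^{2}} \;\in\; \kappa(\eta_i)^{\times}.
\]
Then $q_i(y_{i,1}, \ldots, y_{i,d}) = 0$ by design, giving (a), and a residue computation at each $P \in \mathcal{P}_i$ verifies $\overline{a}_{i,2,P} = \alpha_{2,P}$, completing (b) and (c) for the $r = 2$ entry.

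The main obstacle I foresee is ensuring that $y_{i,2}$ has the correct valuation at each $P \in \mathcal{P}_1 \cap \mathcal{P}_i$, so that the quotient defining $a_{i,2}$ carries the prescribed valuation (and hence residue) at $P$. When $v_P(a_{1,2}) = 0$ this requires $\overline{y}_{2,P} = \overline{x}_{2,P} \neq 0$, but the witness supplied by Lemma \ref{represents 1 over residue field} might a priori satisfy $\overline{x}_{2,P} = 0$. I expect to handle this by showing that whenever the form $\langle \overline{a}_{1,2,P}', \ldots, \overline{a}_{1,d,P}' \rangle$ represents $1$ over $\kappa(P)$ it does so via a vector whose first coordinate is nonzero, by a Witt-style argument: if $\langle \overline{a}_{1,3,P}', \ldots, \overline{a}_{1,d,P}' \rangle$ already represents $1$, one can perturb this representation to represent $1 - \overline{a}_{1,2,P}' c^2$ for a suitable $c \neq 0$ by varying a nonzero coordinate of the original witness. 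The dual case $v_P(a_{1,2}) = 1$ is handled by instead forcing $v_P(y_{i,2}) = 1$, which makes $v_P(a_{i,2}) = 1$ and hence $\overline{a}_{i,2,P} = 0 = \alpha_{2,P}$ as required.
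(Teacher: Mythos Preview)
Your plan has a genuine gap, and the fix you propose for the ``main obstacle'' does not work.

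The core problem is a mismatch between the witness identity supplied by Lemma~\ref{represents 1 over residue field} and the residues you impose on the $a_{i,r}$. The witness satisfies $\sum_{r=2}^{d}\overline{a}_{1,r,P}'\,\overline{x}_{r,P}^{2}=1$, where $\overline{a}_{1,r,P}'$ is the \emph{unit part} of $a_{1,r}$ at $P$; but you choose $\alpha_{r,P}=\overline{a}_{1,r,P}$, which is $0$ whenever $v_P(a_{1,r})=1$. Hence at a point $P$ where the isotropy of $q_1$ over $\kappa(\eta_1)_{v_P}$ is witnessed by the $\pi_P$-part (case~2 in the proof of Lemma~\ref{represents 1 over residue field}), every nonzero $\overline{x}_{r,P}$ lies at an index with $v_P(a_{1,r})=1$, so each term $a_{i,r}y_{i,r}^{2}$ has positive valuation and contributes $0$ to the residue of your numerator. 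The numerator then has residue $1$, while you need it to have residue $\alpha_{2,P}\,\overline{x}_{2,P}^{2}$; these do not agree, and $a_{i,2}$ acquires the wrong valuation (indeed, negative valuation if you force $v_P(y_{i,2})=1$). Your proposed remedy---choosing the witness with $\overline{x}_{2,P}\neq 0$---does not address this discrepancy between $\overline{a}_{1,r,P}'$ and $\alpha_{r,P}$; and in any case the Witt-style claim is false in general: over $\mathbb{F}_3$ the form $\langle -1,1\rangle$ represents $1$ only via $(0,\pm 1)$, so there is no representation with first coordinate nonzero.

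The paper avoids both issues by separating the construction into two steps. First it lifts the \emph{unit parts} $\overline{a}_{1,r,P}'$ to global units $a_{i,r}'\in\mathcal{O}_{X_i,P}^{\times}$ for all $P\in\mathcal{P}_i$, lifts the witnesses to $x_{i,r}$, and sets $z_i=\sum_{r}a_{i,r}'x_{i,r}^{2}$; since $z_i$ is a unit with residue $1$ at every $P\in\mathcal{P}_i\cap\mathcal{P}_1$, the form $\langle -1,z_i^{-1}a_{i,2}',\ldots,z_i^{-1}a_{i,d}'\rangle$ is globally isotropic (via $(1,x_{i,2},\ldots,x_{i,d})$) and still has the correct unit-part residues at those points. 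Only afterwards does it adjust valuations, by multiplying each $z_i^{-1}a_{i,r}'$ by the square of a product of ``partial uniformizers'' $\pi_{i,P}$ supported exactly where $v_P(a_{1,r})=1$; multiplication by squares does not disturb the isometry class, hence isotropy is preserved, and condition~(b) follows because the residue changes only by the vanishing forced at those points. This global rescaling by $z_i^{-1}$, rather than solving for a single entry, is what makes the argument go through without any coordinate-nonvanishing hypothesis.
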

\begin{proof}
The idea of the proof is as follows. We will work with one $X_i$ at a time, first finding elements of $\kappa(\eta_i)^{\times}$ that satisfy conditions (a) and (c), and then we scale these elements by certain squares so that all three conditions are satisfied. We then iterate this procedure as many times as necessary.

For each $P \in \mathcal{P}_1$ let $\pi_{1,P} \in \kappa(\eta_1)^{\times}$ be a uniformizer for the discrete valuation $v_P$ on~$\kappa(\eta_1)$. Then for each $r = 2, \ldots, d$ and $P \in \mathcal{P}_1$, since $v_P(a_{1,r}) \in \{0, 1\}$ by assumption, there is some element $a_{1,r,P}' \in \mathcal{O}_{X_1, P}^{\times}$ such that either $a_{1,r} = a_{1,r,P}'$ or $a_{1,r} = \pi_{1,P}a_{1,r,P}'$. Now, for each $P \in$~$\mathcal{P}_1$, since we assumed that $q_1$ is isotropic over $\kappa(\eta_1)_{v_P}$, there are elements $\overline{x}_{1,2,P}, \ldots, \overline{x}_{1,d,P} \in$~$\kappa(P)$ such that $\sum_{r = 2}^d \overline{a}_{1,r,P}' \overline{x}_{1,r,P}^2 = 1 \in \kappa(P)$ by Lemma \ref{represents 1 over residue field}.

Next, choose a curve $X_i$, $i \geq 2$, such that $X_1 \cap X_i \ne \varnothing$. This choice is possible because~$X$ is connected. By our normal crossings assumption, we can write $\mathcal{P}_i$ as a disjoint union $\mathcal{P}_i = \mathcal{P}_i' \cup \mathcal{P}_i''$, where $\mathcal{P}_i' = \mathcal{P}_i \cap \mathcal{P}_1 = \{P \in X_i \mid P \in X_i \cap X_1\}$ is non-empty. By the previous paragraph, for each $P \in \mathcal{P}_i'$ and each $r = 2, \ldots, d$ there is an $\overline{x}_{1,r,P} \in \kappa(P)$ such that $\sum_{r=2}^d \overline{a}_{1,r,P}' \overline{x}_{1,r,P}^2 = 1$. By Lemma \ref{lifts with prescribed residue} there are elements $a_{i,2}', \ldots, a_{i,d}' \in$~$\kappa(\eta_i)^{\times}$ such that, for all $r = 2, \ldots, d$, $a_{i,r}' \in \mathcal{O}_{X_i, P}^{\times}$ for all $P \in \mathcal{P}_i$ and $\overline{a}_{i,r,P}' = \overline{a}_{1,r,P}' \in$~$\kappa(P)^{\times}$ for all $P \in$~$\mathcal{P}_i'$. Furthermore, there are elements $x_{i,2}, \ldots, x_{i,d} \in \mathcal{O}_{X_i,P} \subset \kappa(\eta_i)$ such that $\overline{x}_{i,r,P} = \overline{x}_{1,r,P} \in$~$\kappa(P)$ for all $r = 2, \ldots, d$ and all $P \in \mathcal{P}_i'$, $x_{i,2} \in \mathcal{O}_{X_i,P}^{\times}$ with $\overline{x}_{i,2,P} = 1 \in \kappa(P)$ for all $P \in \mathcal{P}_i''$, and $v_P(x_{i,r}) = 1$ for all $P \in \mathcal{P}_i''$ and all $2 < r \leq d$. Now let $z_i = \sum_{r = 2}^d a_{i,r}' x_{i,r}^2 \in \kappa(\eta_i)^{\times}$. By our choice of $a_{i,r}'$ and $x_{i,r}$, if $P \in \mathcal{P}_i'$, then $\overline{z}_{i,P} = 1$, and if $P \in \mathcal{P}_i''$, then $\overline{z}_{i,P} = \overline{a}_{i,2,P}' \in \kappa(P)^{\times}$. In particular, $z_i \in \mathcal{O}_{X_i, P}^{\times}$ for all $P \in \mathcal{P}_i$.

Consider the quadratic form $q_i' = \langle -1, z_i^{-1}a_{i,2}', \ldots, z_i^{-1} a_{i,d}' \rangle$ over $\kappa(\eta_i)$. The form $q_i'$ is isotropic over~$\kappa(\eta_i)$ with isotropic vector $(1, x_{i,2}, \ldots, x_{i,d})$, and $z_i^{-1} a_{i,r}' \in \mathcal{O}_{X_i,P}^{\times}$ for all $P \in \mathcal{P}_i$ and all $r =$~$2, \ldots, d$. Next, by Lemma \ref{lifts with prescribed residue}, for each $P \in \mathcal{P}_i'$ we can find an element $\pi_{i,P} \in \kappa(\eta_i)^{\times}$ such that $v_P(\pi_{i,P}) = 1 \text{ and } v_{P'}(\pi_{i,P}) = 0 \text{ with } \overline{\pi}_{i,P,P'} = 1 \in \kappa(P')$ for all $P' \in \mathcal{P}_i \setminus \{P\}$. For each $r = 2, \ldots, d$ let $S_{i,r} = \left\{P \in \mathcal{P}_i' \mid v_P(a_{1,r}) = 1 \right\}$, and let $\pi_{i,r} = \prod_{P \in S_{i,r}} \pi_{i,P}$, where this product is taken to be 1 if $S_{i,r}$ is empty. Then the quadratic form $q_i = \langle -1, z_i^{-1}a_{i,2}' \pi_{i,2}^2, \ldots, z_i^{-1}a_{i,d}' \pi_{i,d}^2 \rangle$ over $\kappa(\eta_i)$ is isometric to $q_i'$ over $\kappa(\eta_i)$, hence $q_i$ is isotropic over $\kappa(\eta_i)$. 

Finally, for each $r = 2, \ldots, d$ let $a_{i,r} = z_i^{-1}a_{i,r}' \pi_{i,r}^2 \in \kappa(\eta_i)^{\times}$. Then for each $r = 2, \ldots, d$ and each $P \in \mathcal{P}_i$ we have $\overline{a}_{i,r,P} = \overline{a}_{1,r,P} \in \kappa(P)$ if $P \in \mathcal{P}_i'$, and $\overline{a}_{i,r,P} = \overline{a}_{i,2,P}'^{-1} \overline{a}_{i,r,P}' \in \kappa(P)^{\times}$ if $P \in \mathcal{P}_i''$. The elements $a_{i,2}, \ldots, a_{i,d}$ are therefore the desired elements of $\kappa(\eta_i)^{\times}$, and if $s = 2$ then we are done. We note here that if $\mathcal{P}_i'' \ne \varnothing$ (which can occur only if $s > 2$), then for all $r = 2, \ldots, d$ and all $P \in \mathcal{P}_i''$ we have $v_P(a_{i,r}) = 0$. Furthermore, since $q_i$ is isotropic over $\kappa(\eta_i)$, it follows that $q_i$ is isotropic over $\kappa(\eta_i)_{v_P}$ for all $P \in \mathcal{P}_i''$. So by Lemma \ref{represents 1 over residue field}, for each $P \in \mathcal{P}_i''$ we can find elements $\overline{x}'_{i,2,P}, \ldots, \overline{x}'_{i,d,P} \in \kappa(P)$ such that $\sum_{r = 2}^d \overline{a}_{i,r,P} \overline{x}_{i,r,P}'^{2} = 1$.

Now suppose $s > 2$ and let $j \in \{2, \ldots, s\}$ be such that we have not yet found $a_{j,2}, \ldots, a_{j,d} \in$~$\kappa(\eta_j)^{\times}$ and the curve $X_j$ intersects at least one of $X_i$ and $X_1$. Again, such a choice is possible because $X$ is connected. We can then write $\mathcal{P}_j$ as a disjoint union $\mathcal{P}_j = \mathcal{P}_{j}' \cup \mathcal{P}_{j}''$, where $\mathcal{P}_{j}'$ consists of the closed points $P \in X_{j}$ such that $P \in X_{j} \cap X_i$ or $P \in X_j \cap X_1$. Then by repeating the same argument as that above with $X_j$ replacing $X_i$, $\mathcal{P}_j'$ replacing $\mathcal{P}_i'$, and $\mathcal{P}_j''$ replacing $\mathcal{P}_i''$, we can find the desired elements $a_{j,2}, \ldots, a_{j,d} \in \kappa(\eta_{j})^{\times}$. Repeating this process as many times as necessary, at each stage selecting a curve $X_{\ell}$ in the same fashion as we selected $X_j$ above, completes the proof.
\end{proof}

\begin{remark}
\label{form with small valuation}
Let $v_1, \ldots, v_n$ be non-trivial pairwise non-equivalent discrete valuations on a field $k$ of characteristic $\ne 2$. For each $i = 1, \ldots, n$ we can find an element $\pi_i \in k^{\times}$ such that $v_i(\pi_i) = 1$ and $v_j(\pi_i) = 0$ for all $j \ne i$. Let $a \in k^{\times}$ be arbitrary. For each $i = 1, \ldots, n$ let $r_i = -v_i(a)$ if $v_i(a)$ is even or $r_i = -v_i(a) + 1$ if $v_i(a)$ is odd. Then for all $i = 1, \ldots, n$, $r_i$ is even and $v_i(a \prod_{j = 1}^n \pi_j^{r_j}) \in \{0, 1\}$. So, given any elements $a_1, \ldots, a_d \in k^{\times}$, the quadratic form $q = \langle a_1, \ldots, a_d \rangle$ over $k$ is isometric to a quadratic form $q' = \langle a_1', \ldots, a_d' \rangle$ over $k$ whose entries satisfy $v_i(a_j') \in \{0, 1\}$ for all $i = 1, \ldots, n$ and all $j = 1, \ldots, d$. Since $q$ and $q'$ are isometric forms, we see that $q$ is, e.g., isotropic or universal if and only if $q'$ is isotropic or universal, respectively.
\end{remark}

\begin{lemma}
\label{surjective from closed fiber}
Let $X$ be a projective scheme over a field, and let $X^{\text{\emph{red}}}$ be the reduced scheme associated to $X$, with $\iota: X^{\text{\emph{red}}} \to X$ the corresponding closed immersion. Let $D$ be an ample divisor on $X$. Then there is an integer $n_0 \geq 0$ such that the map $H^0(X, \mathcal{O}_X(nD)) \to H^0\left(X^{\text{\emph{red}}}, \mathcal{O}_{X^{\emph{red}}}(\iota^*(nD))\right)$ is surjective for all $n \geq n_0$.
\end{lemma}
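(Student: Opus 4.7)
The plan is to use Serre vanishing applied to the ideal sheaf of $X^{\text{red}}$ in $X$. Let $\mathcal{I} \subseteq \mathcal{O}_X$ be the ideal sheaf defining the closed immersion $\iota : X^{\text{red}} \to X$ (i.e., the nilradical of $\mathcal{O}_X$). Since $X$ is Noetherian (being projective over a field), $\mathcal{I}$ is a coherent sheaf. This gives the standard short exact sequence
\[
	0 \to \mathcal{I} \to \mathcal{O}_X \to \iota_* \mathcal{O}_{X^{\text{red}}} \to 0
\]
of coherent sheaves on $X$.

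Since $\mathcal{O}_X(nD)$ is an invertible sheaf, tensoring the above with $\mathcal{O}_X(nD)$ preserves exactness, and by the projection formula we have $\iota_* \mathcal{O}_{X^{\text{red}}} \otimes \mathcal{O}_X(nD) \cong \iota_*\left(\iota^* \mathcal{O}_X(nD)\right) = \iota_* \mathcal{O}_{X^{\text{red}}}(\iota^*(nD))$. Taking the associated long exact sequence in sheaf cohomology on $X$, and using that pushforward along the closed immersion $\iota$ preserves cohomology (so that $H^0(X, \iota_* \mathcal{O}_{X^{\text{red}}}(\iota^*(nD))) = H^0(X^{\text{red}}, \mathcal{O}_{X^{\text{red}}}(\iota^*(nD)))$), we obtain the exact sequence
\[
	H^0(X, \mathcal{O}_X(nD)) \to H^0(X^{\text{red}}, \mathcal{O}_{X^{\text{red}}}(\iota^*(nD))) \to H^1(X, \mathcal{I}(nD)).
\]

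Now we apply Serre's vanishing theorem: since $X$ is projective over a field, $\mathcal{I}$ is coherent on $X$, and $D$ is ample, there exists an integer $n_0 \geq 0$ such that $H^1(X, \mathcal{I}(nD)) = 0$ for all $n \geq n_0$. For such $n$, the map in the statement is therefore surjective, completing the proof.

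The argument is essentially forced once one identifies the right short exact sequence, and the only non-trivial input is Serre vanishing for the coherent sheaf $\mathcal{I}$, so there is no real obstacle to overcome here; the care needed is just in identifying the pullback $\iota^*\mathcal{O}_X(nD)$ with $\mathcal{O}_{X^{\text{red}}}(\iota^*(nD))$ and applying the projection formula correctly.
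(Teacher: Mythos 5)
Your proof is correct and follows essentially the same approach as the paper: both arguments tensor the short exact sequence for the nilradical ideal sheaf with $\mathcal{O}_X(nD)$, invoke Serre vanishing for $H^1$ of the twisted ideal sheaf, and identify $H^0(X, \iota_*\iota^*\mathcal{O}_X(nD))$ with $H^0(X^{\text{red}}, \mathcal{O}_{X^{\text{red}}}(\iota^*(nD)))$ via the projection formula and the fact that closed immersions are affine.
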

\begin{proof}
Let $\mathcal{N} \subset \mathcal{O}_X$ be the coherent sheaf of ideals that cuts out $X^{\text{red}}$ as a closed subscheme of~$X$. Since $D$ is an ample divisor on~$X$, there is some $n_0 \geq 0$ such that $H^1\left(X, \mathcal{N} \otimes_{\mathcal{O}_X} \mathcal{O}_X(nD)\right) = 0$ for all $n \geq n_0$ \cite[\href{https://stacks.math.columbia.edu/tag/0B5U}{Tag 0B5U}]{stacks-project}.

For all $n \geq n_0$ the long exact cohomology sequence arising from the short exact sequence of sheaves on $X$, $0 \to \mathcal{N} \otimes_{\mathcal{O}_X} \mathcal{O}_X(nD)\to \mathcal{O}_X(nD) \to \mathcal{O}_X / \mathcal{N} \otimes_{\mathcal{O}_X} \mathcal{O}_X(nD) \to 0$, yields the 3-term short exact sequence
\[
	H^0(X, \mathcal{O}_X(nD)) \to H^0(X, \mathcal{O}_X / \mathcal{N} \otimes_{\mathcal{O}_X} \mathcal{O}_X(nD)) \to H^1(X, \mathcal{N} \otimes_{\mathcal{O}_X} \mathcal{O}_X(nD)) = 0.
\]
Thus the map $H^0(X, \mathcal{O}_X(nD)) \to H^0(X, \mathcal{O}_X / \mathcal{N} \otimes_{\mathcal{O}_X} \mathcal{O}_X(nD))$ is surjective. Moreover, since $\iota_*\mathcal{O}_{X^{\text{red}}} \cong \mathcal{O}_X / \mathcal{N}$ as sheaves on $X$, we have
\[
	\mathcal{O}_X / \mathcal{N} \otimes_{\mathcal{O}_X} \mathcal{O}_X(nD) \cong \iota_*\mathcal{O}_{X^{\text{red}}} \otimes_{\mathcal{O}_X} \mathcal{O}_X(nD) \cong \iota_*\left(\mathcal{O}_{X^{\text{red}}} \otimes_{\mathcal{O}_{X^{\text{red}}}} \iota^*\mathcal{O}_X(nD)\right) \cong \iota_*\iota^* \mathcal{O}_X(nD).
\]
Here, the second isomorphism follows from the Projection Formula. Next, since closed immersions are affine \cite[\href{https://stacks.math.columbia.edu/tag/01SE}{Tag 01SE}]{stacks-project}, we have $H^0(X, \iota_*\iota^*\mathcal{O}_X(nD)) = H^0\left(X^{\text{red}}, \iota^*\mathcal{O}_X(nD)\right)$ \cite[\href{https://stacks.math.columbia.edu/tag/089W}{Tag 089W}]{stacks-project}. Furthermore, $\iota^*\mathcal{O}_X(nD) \cong \mathcal{O}_{X^{\text{red}}}(\iota^*(nD))$ \cite[p.~312]{gw}, thus $H^0\left(X^{\text{red}}, \iota^*\mathcal{O}_X(nD)\right) \cong H^0\left(X^{\text{red}}, \mathcal{O}_{X^{\text{red}}}(\iota^*(nD))\right)$. Therefore
\[
	H^0(X, \mathcal{O}_X / \mathcal{N} \otimes_{\mathcal{O}_X} \mathcal{O}_X(nD)) \cong H^0(X, \iota_*\iota^*\mathcal{O}_X(nD)) \cong H^0\left(X^{\text{red}}, \mathcal{O}_{X^{\text{red}}}(\iota^*(nD))\right).
\]
Finally, since the map $H^0(X, \mathcal{O}_X(nD)) \to H^0(X, \mathcal{O}_X / \mathcal{N} \otimes_{\mathcal{O}_X} \mathcal{O}_X(nD))$ is surjective, by the isomorphisms above, the map $H^0(X, \mathcal{O}_X(nD)) \to H^0\left(X^{\text{red}}, \mathcal{O}_{X^{\text{red}}}(\iota^*(nD))\right)$ is surjective.
\end{proof}

\begin{cor}
\label{surjective map on global sections}
Let $T$ be a complete discrete valuation ring with residue field $k$, and let $\mathscr{X}$ be a regular connected projective $T$-curve with closed fiber $X$. Let $X^{\text{\emph{red}}}$ be the reduced closed fiber of~$\mathscr{X}$, and let $\iota: X^{\text{\emph{red}}} \to X$ be the corresponding closed immersion. Let $S$ be a finite set of closed points of~$\mathscr{X}$ that meets each irreducible component of $X^{\text{\emph{red}}}$ non-trivially. Then there is an effective Cartier divisor $\widehat{D}$ on $\mathscr{X}$ whose support meets $X^{\text{\emph{red}}}$ precisely at $S$ and some integer $n_0 \geq 0$ such that the map $H^0(\mathscr{X}, \mathcal{O}_{\mathscr{X}} (n\widehat{D})) \to H^0(X^{\text{\emph{red}}}, \mathcal{O}_{X^{\emph{red}}}(\iota^*(nD)))$ is surjective for all $n \geq n_0$, where $D$ is the restriction of $\widehat{D}$ to $X$. Furthermore, the divisor $\iota^*D$ on~$X^{\text{\emph{red}}}$ has support $S$.
\end{cor}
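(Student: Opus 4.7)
The plan is to construct an ample effective Cartier divisor $\widehat{D}$ on $\mathscr{X}$ whose support meets $X^{\text{red}}$ precisely at $S$, then factor the claimed surjection through $H^0(X,\mathcal{O}_X(nD))$ using two applications of Serre vanishing. On one side, Lemma \ref{surjective from closed fiber} will give surjectivity from $X$ down to $X^{\text{red}}$ once $D = \widehat{D}|_X$ is known to be ample; on the other side, a Serre-vanishing argument on $\mathscr{X}$ itself will lift sections from $X$ up to $\mathscr{X}$.

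For the construction of $\widehat{D}$, I would fix any very ample line bundle $\mathcal{L}$ on the projective $T$-scheme $\mathscr{X}$. The same Serre-vanishing argument that proves Lemma \ref{surjective from closed fiber}, applied on $\mathscr{X}$ in place of $X$ with $X^{\text{red}}$ still playing its own role, shows that for $m$ large the restriction $H^0(\mathscr{X},\mathcal{L}^{\otimes m}) \to H^0(X^{\text{red}},\mathcal{L}^{\otimes m}|_{X^{\text{red}}})$ is surjective. On the projective $k$-curve $X^{\text{red}}$, whose irreducible components each meet $S$ by hypothesis, taking $m$ large enough one can produce an effective divisor in the class of $\mathcal{L}^{\otimes m}|_{X^{\text{red}}}$ supported exactly on $S$ --- component by component via Riemann--Roch, absorbing all remaining degree into multiplicities at the points of $S$. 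Realizing this divisor as the zero locus of a section of $\mathcal{L}^{\otimes m}|_{X^{\text{red}}}$ and lifting to a section $s \in H^0(\mathscr{X},\mathcal{L}^{\otimes m})$ gives $\widehat{D} := V(s)$: by construction $|\widehat{D}|\cap |X^{\text{red}}| = S$ (so in particular $\widehat{D}$ has no vertical components), and $\mathcal{O}_{\mathscr{X}}(\widehat{D}) \cong \mathcal{L}^{\otimes m}$ is ample.

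With $D := \widehat{D}|_X$ ample on $X$, Lemma \ref{surjective from closed fiber} supplies some $n_1$ such that $H^0(X,\mathcal{O}_X(nD)) \twoheadrightarrow H^0(X^{\text{red}},\mathcal{O}_{X^{\text{red}}}(\iota^*(nD)))$ for all $n \geq n_1$. For the lift from $X$ to $\mathscr{X}$, flatness of $\mathscr{X}$ over $T$ makes the uniformizer $\pi \in T$ a regular element of $\mathcal{O}_{\mathscr{X}}$, giving
\[
0 \to \mathcal{O}_{\mathscr{X}}(n\widehat{D}) \xrightarrow{\cdot\pi} \mathcal{O}_{\mathscr{X}}(n\widehat{D}) \to \mathcal{O}_X(nD) \to 0,
\]
and the associated long exact cohomology sequence reduces surjectivity of $H^0(\mathscr{X},\mathcal{O}_{\mathscr{X}}(n\widehat{D})) \to H^0(X,\mathcal{O}_X(nD))$ to the vanishing $H^1(\mathscr{X},\mathcal{O}_{\mathscr{X}}(n\widehat{D}))=0$, which holds for $n \geq n_2$ by Serre vanishing for the ample divisor $\widehat{D}$. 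Taking $n_0 := \max\{n_1,n_2\}$ and composing the two surjections then yields the claim; and $|\iota^*D|=S$ follows because $|D| = |\widehat{D}|\cap |X| = S$ and $\iota^{-1}(S)=S$.

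The main obstacle I anticipate is the first step: producing a section of $\mathcal{L}^{\otimes m}|_{X^{\text{red}}}$ whose zero divisor is supported exactly on $S$. On an irreducible component of positive genus, not every line bundle class admits an effective representative supported on a prescribed finite set, and one must take $m$ sufficiently large so that Riemann--Roch provides enough sections of $\mathcal{L}^{\otimes m}|_{X_i}$ on each component to realize the required class by an effective divisor supported on $S \cap X_i$, while matching degrees component by component.
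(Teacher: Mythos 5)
Your two-step factorization through $H^0(X,\mathcal{O}_X(nD))$ is exactly the paper's strategy: Lemma~\ref{surjective from closed fiber} for the step from $X$ to $X^{\text{red}}$ (once $D$ is known ample), and a Serre-vanishing argument for the step from $\mathscr{X}$ to $X$. Your version of the latter step --- the multiplication-by-$\pi$ short exact sequence combined with vanishing of $H^1(\mathscr{X},\mathcal{O}_{\mathscr{X}}(n\widehat{D}))$ --- is a correct substitute for the paper's citation to the Stacks Project, and your verification that $\iota^*D$ has support $S$ is fine.

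The genuine gap is in the construction of $\widehat{D}$, and it is not the one you flagged. You fix a very ample $\mathcal{L}$ on $\mathscr{X}$ in advance and then ask for an effective Cartier divisor on $X^{\text{red}}$ supported exactly on $S$ and lying in the class of $\mathcal{L}^{\otimes m}|_{X^{\text{red}}}$. Even on a single smooth irreducible component $X_i$ of positive genus with $S\cap X_i = \{P\}$, this demands that $[\mathcal{L}|_{X_i}]^{\otimes m}$ lie in the cyclic subgroup of $\operatorname{Pic}(X_i)$ generated by $[P]$, i.e.\ that the class of $\mathcal{L}|_{X_i}$ modulo $\langle [P]\rangle$ be torsion. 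That is a Picard-group obstruction that persists for \emph{all} $m$: for a generic line bundle on an elliptic curve over, say, $\mathbb{C}$ there is no such $m$. Enlarging $m$ improves degree and Riemann--Roch considerations, but it cannot change the class modulo the subgroup generated by divisors supported on $S$, which is where the obstruction lives. Matters only get worse when $X^{\text{red}}$ is reducible and $S$ may contain singular points, where one additionally needs a \emph{Cartier} divisor supported on $S$ and sections on the various components must be made to agree at the nodes. So as written the first step does not go through.

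The paper sidesteps all of this by constructing $\widehat{D}$ directly on the regular scheme $\mathscr{X}$: by the proof of \cite[Proposition~3.3]{hhk15}, for each $P\in S$ there is an effective Cartier divisor $\widehat{D}_P$ on $\mathscr{X}$ whose support meets $X^{\text{red}}$ precisely at $P$, and one takes $\widehat{D}=\sum_{P\in S}\widehat{D}_P$. Because $\mathscr{X}$ is regular of dimension two, every Weil divisor is Cartier, so there is no line-bundle-class obstruction to arrange in advance; the ampleness of $D=\widehat{D}|_X$ then comes for free from the same HHK argument rather than from having been rigged to be a power of a chosen $\mathcal{L}$. If you want a self-contained argument rather than the citation, the point to internalize is that the divisor must be built on $\mathscr{X}$ (regular) and only afterwards restricted, not built on $X^{\text{red}}$ inside a pre-chosen linear system.
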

\begin{proof}
By \cite[Proof of Proposition~3.3]{hhk15}, for each $P \in S$ we can find an effective Cartier divisor~$\widehat{D}_P$ on~$\mathscr{X}$ whose support meets $X^{\text{red}}$ precisely at $P$. Since $S$ meets each irreducible component of $X^{\text{red}}$ non-trivially, if we let $\widehat{D} = \sum_{P \in S} \widehat{D}_P$, then $\widehat{D}$ is an effective Cartier divisor on $\mathscr{X}$ whose restriction to $X$ is an effective divisor $D$ whose support meets each irreducible component of~$X$ non-trivially. Moreover, by construction, $\iota^*D$ has support $S$. We also observe that, by the first paragraph of \cite[Proof of Proposition~3.3]{hhk15}, the effective divisor $D$ on $X$ is ample. 

Since $D$ is an ample divisor on $X$, by Lemma \ref{surjective from closed fiber} there is an $n_1 \geq 0$ such that the map $H^0(X, \mathcal{O}_X(nD)) \to H^0(X^{\text{red}}, \mathcal{O}_{X^{\text{red}}}(\iota^*(nD)))$ is surjective for all $n \geq n_1$. Furthermore, since $\mathscr{X}$ is a projective $T$-curve and $D$ is ample on $X$, by \cite[\href{https://stacks.math.columbia.edu/tag/0D2M}{Proof of Tag 0D2M}]{stacks-project} there is an $n_2 \geq 0$ such that the map $H^0(\mathscr{X}, \mathcal{O}_\mathscr{X}(n\widehat{D})) \to H^0(X, \mathcal{O}_X(nD))$ is surjective for all $n \geq n_2$. So if we let $n_0 = \max\{n_1, n_2\} \geq 0$, then the composition of these surjective maps yields the surjective map
\[
	H^0(\mathscr{X}, \mathcal{O}_\mathscr{X}(n\widehat{D})) \to H^0(X^{\text{red}}, \mathcal{O}_{X^{\text{red}}}(\iota^*(nD)))
\]
for all $n \geq n_0$, as desired.
\end{proof}
The next two results will be used to check that the lift of our quadratic form has certain desired isotropy properties.
\begin{lemma}
\label{element that reduces to uniformizer}
Let $T$ be a complete discrete valuation ring with residue field $k$, and let $\mathscr{X}$ be a regular connected projective $T$-curve whose reduced closed fiber $X^{\text{\emph{red}}}$ is a union of regular irreducible $k$-curves, with normal crossings. Then for each closed point $P$ of $\mathscr{X}$ there is an element $r_P \in \widehat{R}_P$ such that, for each irreducible component $X_i$ of $X^{\emph{red}}$ that contains $P$, $r_P$ has non-zero image $\overline{r}_P$ in~$\widehat{\mathcal{O}}_{X_i, P}$ and~$\overline{r}_P$ generates the maximal ideal of $\widehat{\mathcal{O}}_{X_i, P}$.
\end{lemma}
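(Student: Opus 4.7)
The plan is to construct $r_P$ explicitly from a well-chosen system of parameters at $P$. Since $\mathscr{X}$ is a regular $T$-curve of Krull dimension two, the complete local ring $\widehat{R}_P$ at a closed point $P$ is a two-dimensional regular local ring and hence admits a regular system of parameters consisting of two elements. The normal crossings hypothesis on $X^{\text{red}}$ tells us that at most two irreducible components of $X^{\text{red}}$ pass through $P$, and when two do, their local equations at $P$ together form such a regular system of parameters. With this in mind, the two cases can be handled separately.

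If exactly one component $X_i$ of $X^{\text{red}}$ passes through $P$, then the regularity of $X_i$ at $P$ implies that a local equation $f \in \widehat{R}_P$ for $X_i$ can be extended to a regular system of parameters $\{f, g\}$ of $\widehat{R}_P$. Then $\widehat{\mathcal{O}}_{X_i, P} \cong \widehat{R}_P/(f)$ is a discrete valuation ring whose maximal ideal is generated by the image $\bar{g}$ of $g$, which is in particular non-zero. I would set $r_P = g$.

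If two components $X_i$ and $X_j$ of $X^{\text{red}}$ meet at $P$, the normal crossings hypothesis gives local equations $f_i, f_j \in \widehat{R}_P$ of $X_i$ and $X_j$ respectively that together form a regular system of parameters of $\widehat{R}_P$. Then $\widehat{\mathcal{O}}_{X_i, P} \cong \widehat{R}_P/(f_i)$ and $\widehat{\mathcal{O}}_{X_j, P} \cong \widehat{R}_P/(f_j)$ are DVRs whose maximal ideals are generated by $\bar{f}_j$ and $\bar{f}_i$ respectively. Setting $r_P = f_i + f_j$ then produces an element of $\widehat{R}_P$ whose image in $\widehat{\mathcal{O}}_{X_i, P}$ is $\bar{f}_j$ and whose image in $\widehat{\mathcal{O}}_{X_j, P}$ is $\bar{f}_i$; both are uniformizers, as required.

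The main substantive input is the translation of the normal crossings condition into the statement that local equations of the distinct components through $P$ form a regular system of parameters of $\widehat{R}_P$. Since this is essentially built into the definition of normal crossings at a smooth point of a regular two-dimensional scheme, no serious obstacle arises, and the construction of $r_P$ is then immediate in each case.
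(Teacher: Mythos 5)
Your proof is correct and follows essentially the same route as the paper's: in both the one-component and two-component cases you produce the same element $r_P$ (a complementary regular parameter $g$, respectively $f_i+f_j$). The paper additionally invokes Auslander--Buchsbaum to justify that the height-one primes defining the components in $\widehat{R}_P$ are principal, a step you fold into the phrase ``local equation,'' but the substance is identical.
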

\begin{proof}
We first observe that since $\mathscr{X}$ is regular, for all closed points $P$ of $\mathscr{X}$ the ring $\widehat{R}_P$ is a regular local ring of dimension two. In particular, since $\widehat{R}_P$ is a regular local ring, it is a unique factorization domain \cite[Theorem~5]{auslander}, so every height one prime ideal of $\widehat{R}_P$ is principal.

Now let $P$ be an arbitrary closed point of $\mathscr{X}$. By our assumption that $X^{\text{red}}$ is a union of regular irreducible $k$-curves with normal crossings, the point $P$ belongs to either one or two irreducible components of~$X^{\text{red}}$.

First suppose that $P$ belongs to a unique irreducible component $X_i$ of $X^{\text{red}}$ with generic point~$\eta_i$. The height one prime $\mathfrak{p}$ defining $\eta_i$ in $\widehat{R}_P$ is principal, so $\mathfrak{p} = (f_P)$ for some some $f_P \in \widehat{R}_P$. Now, because $\widehat{R}_P$ is a two-dimensional regular local ring, there must be another element $g_P \in \widehat{R}_P$ such that $g_P \not\in \mathfrak{p}$ and $\{f_P, g_P\}$ is a regular system of parameters of $\widehat{R}_P$. Since $\widehat{R}_P / \mathfrak{p} \cong \widehat{\mathcal{O}}_{X_i, P}$, the non-zero image $\overline{g}_P$ of $g_P$ in $\widehat{\mathcal{O}}_{X_i, P}$ generates the maximal ideal of $\widehat{\mathcal{O}}_{X_i, P}$. Letting $r_P = g_P$ proves the lemma in this case.

Now suppose that $P$ belongs to two irreducible components $X_i$ and $X_j$ of $X^{\text{red}}$, with generic points $\eta_i$ and $\eta_j$, respectively. If $\mathfrak{p}_i$ and $\mathfrak{p}_j$ are the height one primes that define $\eta_i$ and $\eta_j$ in~$\widehat{R}_P$, respectively, then we can find elements $f_P, g_P \in \widehat{R}_P$ such that $\mathfrak{p}_i = (f_P)$ and $\mathfrak{p}_j = (g_P)$. The points~$\eta_i$ and $\eta_j$ are distinct, so $\mathfrak{p}_i \ne \mathfrak{p}_j$. Therefore $f_P \not\in \mathfrak{p}_j$ and $g_P \not\in \mathfrak{p}_i$, so $\{f_P, g_P\}$ is a regular system of parameters of $\widehat{R}_P$. Moreover, the non-zero image of~$f_P$ in $\widehat{R}_P / \mathfrak{p}_j \cong \widehat{\mathcal{O}}_{X_j, P}$ generates the maximal ideal of $\widehat{\mathcal{O}}_{X_j, P}$, and the non-zero image of~$g_P$ in $\widehat{R}_P / \mathfrak{p}_i \cong \widehat{\mathcal{O}}_{X_i, P}$ generates the maximal ideal of $\widehat{\mathcal{O}}_{X_i, P}$. Thus $r_P = f_P + g_P$ is our desired element of $\widehat{R}_P$.
\end{proof}

The next lemma is analogous to Springer's Theorem, and gives a criterion for isotropy of a quadratic form over the fraction field of a complete local ring of any Krull dimension.
\begin{lemma}
\label{Hensel's Lemma for q forms}
Let $R$ be a complete local ring with residue field $\kappa$ of characteristic $\ne 2$ and fraction field $K$. For any integer $n \geq 2$ let $a_1, \ldots, a_n \in R^{\times}$ be units in $R$ with images $\overline{a}_1, \ldots, \overline{a}_n \in \kappa^{\times}$, and consider the quadratic form $q = \langle a_1, \ldots, a_n \rangle$ over $K$. If the quadratic form $\overline{q} = \left\langle \overline{a}_1, \ldots, \overline{a}_n \right\rangle$ is isotropic over $\kappa$, then $q$ is isotropic over $K$.
\end{lemma}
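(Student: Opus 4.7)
The plan is to reduce to a statement about square roots in $R$ and then invoke Hensel's lemma, which applies to any complete local ring.

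First I would take an isotropic vector $(\overline{x}_1, \ldots, \overline{x}_n) \in \kappa^n$ for $\overline{q}$. Since this vector is non-zero, after permuting the entries of $q$ (which does not affect isotropy) I may assume $\overline{x}_1 \ne 0$. Choose any lifts $\widetilde{x}_1, \widetilde{x}_2, \ldots, \widetilde{x}_n \in R$ of these residues, with $\widetilde{x}_1 \in R^\times$. Set
\[
    c = \sum_{i=2}^n a_i \widetilde{x}_i^{\,2} \in R.
\]
The relation $\sum_{i=1}^n \overline{a}_i \overline{x}_i^{\,2} = 0$ in $\kappa$ gives $\overline{c} = -\overline{a}_1 \overline{x}_1^{\,2}$, which is a non-zero square in $\kappa$ because $\overline{a}_1 \in \kappa^\times$ and $\overline{x}_1 \ne 0$. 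In particular $c \in R^\times$, and the unit $u := -c/a_1 \in R^\times$ has reduction $\overline{u} = \overline{x}_1^{\,2} \in \kappa^{\times 2}$.

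Next I would show that $u$ is a square in $R$. Consider the polynomial $f(X) = X^2 - u \in R[X]$. Its reduction modulo the maximal ideal $\mathfrak{m}$ of $R$ has $\overline{x}_1$ as a root, and $\overline{f}'(\overline{x}_1) = 2\overline{x}_1 \ne 0$ in $\kappa$ since $\Char\kappa \ne 2$ and $\overline{x}_1 \ne 0$. Because $R$ is a complete local ring, Hensel's lemma lifts $\overline{x}_1$ to a root $x_1 \in R$ of $f$, so $x_1^{\,2} = u$, i.e., $a_1 x_1^{\,2} + c = 0$. Substituting, the vector $(x_1, \widetilde{x}_2, \ldots, \widetilde{x}_n) \in R^n \subset K^n$ satisfies $\sum_{i=1}^n a_i x_i^{\,2} = 0$, where $x_i = \widetilde{x}_i$ for $i \ge 2$. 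Since this vector reduces to $(\overline{x}_1, \overline{x}_2, \ldots, \overline{x}_n) \ne 0$ in $\kappa^n$, it is itself non-zero, exhibiting $q$ as isotropic over $K$.

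The only step requiring care is the reduction to Hensel's lemma: one needs to fix the residues of the last $n-1$ coordinates rather than trying to lift all coordinates simultaneously, so that the resulting one-variable polynomial has a simple root in $\kappa$ and is amenable to the standard Hensel input. No obstacle arises from the Krull dimension of $R$, since Hensel's lemma for complete local rings makes no dimensional assumption.
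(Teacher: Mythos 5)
Your proof is correct and takes essentially the same approach as the paper: both arguments fix the last $n-1$ coordinates (with suitable lifts) and then solve for the first coordinate via Hensel's lemma applied to a one-variable quadratic, exploiting that $\Char\kappa\neq 2$ makes the relevant residue root simple. The only cosmetic difference is that you normalize to $X^2 - u$ with $u=-c/a_1$, whereas the paper applies Hensel directly to $f(x)=a_1x^2 + a_2b_2^2+\cdots+a_rb_r^2$.
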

\begin{proof}
Since $\overline{q}$ is isotropic over $\kappa$, there exist elements $\overline{b}_1, \ldots, \overline{b}_n \in \kappa$, not all zero, such that
\[
	\overline{q} \left(\overline{b}_1, \ldots, \overline{b}_n\right) = \overline{a}_1\overline{b}_1^2 + \cdots + \overline{a}_n \overline{b}_n^2 = 0.
\]
Potentially after renumbering, we may assume that $\overline{b}_1, \ldots, \overline{b}_r \ne 0$ for some $2 \leq r \leq n$ and $\overline{b}_{r+1}, \ldots, \overline{b}_n = 0$.
Let $b_2, \ldots, b_r \in R^{\times}$ be unit lifts of $\overline{b}_2, \ldots, \overline{b}_r$, and consider the polynomial $f(x) = q(x, b_2, \ldots, b_r, 0, \ldots, 0) = a_1x^2 + a_2b_2^2 + \cdots + a_rb_r^2 \in R[x]$. By Hensel's Lemma, there is a unit lift $b_1 \in R^{\times}$ of $\overline{b}_1$ such that $f(b_1) = q(b_1, b_2, \ldots, b_r, 0, \ldots, 0) = 0$. Therefore $q$ is isotropic over~$K$, as desired.
\end{proof}
We are now ready to prove
\begin{prop}
\label{lifting universal forms on nice closed fiber}
Let $T$ be a complete discrete valuation ring with residue field $k$ of characteristic $\ne 2$ such that $m_s(k) = u_s(k) = 2^n$ for some integer $n \geq 1$. Let $\mathscr{X}$ be a regular connected projective $T$-curve whose reduced closed fiber $X^{\text{\emph{red}}}$ is a union of regular irreducible $k$-curves $X_1, \ldots, X_s$, with normal crossings. For $1 \leq i \leq s$ let $\eta_i$ be the generic point of $X_i$, and suppose that there is a universal quadratic form $q_1$ over $\kappa(\eta_1)$ such that $2 \leq d = \dim q < 2^{n+1}$. Let $F$ be the function field of $\mathscr{X}$. Then there is a $d$-dimensional quadratic form $\widetilde{q}$ over $F$ such that
\begin{enumerate}[label=(\alph*)]
	\item the first residue form of $\widetilde{q}_{\eta_1}$ over $F_{\eta_1}$ is isometric to $q_1$ over $\kappa(\eta_1)$ and the second residue form of $\widetilde{q}_{\eta_1}$ is trivial, and
	
	\item $\widetilde{q}$ is isotropic over $F_P$ for all points (not necessarily closed) $P \in X^{\text{\emph{red}}} \setminus \{\eta_1\}$.
\end{enumerate}
\end{prop}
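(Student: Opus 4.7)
The plan is to reduce the problem to an application of Proposition \ref{defining functions on components} followed by a careful lift from $X^{\text{red}}$ to $\mathscr{X}$. Since $q_1$ is universal over $\kappa(\eta_1)$, it represents $1$, so I may diagonalize $q_1 \simeq \langle 1, b_2, \ldots, b_d \rangle$ with $b_j \in \kappa(\eta_1)^\times$ and set $a_{1,j} = -b_j$, so that $\langle -1, a_{1,2}, \ldots, a_{1,d} \rangle \simeq -q_1$ has the same isotropy behavior as $q_1$. Because $d < 2^{n+1}$, Lemma \ref{universal form on component of closed fiber} applies to $q_1$ on the regular irreducible $k$-curve $X_1$ and forces $q_1$ to be isotropic over $\kappa(\eta_1)_{v_P}$ for every closed point $P \in X_1$, in particular every $P \in \mathcal{P}_1$. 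After adjusting by squares as in Remark \ref{form with small valuation}, I may assume $v_P(a_{1,j}) \in \{0,1\}$ for all such $P$ and all $j$, so Proposition \ref{defining functions on components} produces compatible elements $a_{i,j} \in \kappa(\eta_i)^\times$ for $i = 2, \ldots, s$, each making $\langle -1, a_{i,2}, \ldots, a_{i,d} \rangle$ isotropic over $\kappa(\eta_i)$ with matching residues at every closed intersection point as in (b)--(c) of that proposition.

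Next I would globalize the data $\{a_{i,j}\}$ to elements of $F^\times$. Take a finite set $S$ of closed points of $\mathscr{X}$ meeting every irreducible component of $X^{\text{red}}$ non-trivially and containing every closed intersection point, and invoke Corollary \ref{surjective map on global sections} to obtain an effective Cartier divisor $\widehat{D}$ supported on $S$ and an integer $n_0$ giving a surjection $H^0(\mathscr{X}, \mathcal{O}_{\mathscr{X}}(n\widehat{D})) \twoheadrightarrow H^0(X^{\text{red}}, \mathcal{O}_{X^{\text{red}}}(\iota^*(nD)))$ for all $n \geq n_0$. Using this surjection (and taking ratios of global sections to clear denominators and avoid vanishing along any component of $X^{\text{red}}$), I construct elements $\widetilde{a}_j \in F^\times$ which are units at every $\eta_i$, whose residues in $\kappa(\eta_i)$ agree with $a_{i,j}$ modulo squares for each $i$, and whose residues at every closed point of $S$ match the values prescribed by the compatibility of Proposition \ref{defining functions on components}. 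Set $\widetilde{q} = \langle 1, -\widetilde{a}_2, \ldots, -\widetilde{a}_d \rangle$ over $F$.

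Part (a) is then immediate: the entries of $\widetilde{q}$ are all units at $v_{\eta_1}$ with residues giving $\langle 1, -a_{1,2}, \ldots, -a_{1,d} \rangle = \langle 1, b_2, \ldots, b_d \rangle \simeq q_1$, so the first residue form of $\widetilde{q}_{\eta_1}$ is isometric to $q_1$ and the second is trivial. For part (b) there are two cases. When $P = \eta_i$ with $i \neq 1$, the first residue form of $\widetilde{q}_{\eta_i}$ is isometric to $\langle 1, -a_{i,2}, \ldots, -a_{i,d} \rangle = -\langle -1, a_{i,2}, \ldots, a_{i,d} \rangle$, which is isotropic by Proposition \ref{defining functions on components}(a), so Springer's Theorem yields isotropy of $\widetilde{q}$ over $F_{\eta_i}$. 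When $P$ is a closed point of $X^{\text{red}}$, I scale each entry by a suitable power of the element $r_P \in \widehat{R}_P$ provided by Lemma \ref{element that reduces to uniformizer} so that every entry becomes a unit in $\widehat{R}_P$, then use Lemma \ref{represents 1 over residue field} together with the compatibility in Proposition \ref{defining functions on components}(b)--(c) to exhibit an isotropic vector in $\kappa(P)$ for the residue form, and lift via Lemma \ref{Hensel's Lemma for q forms} to an isotropic vector for $\widetilde{q}$ over $F_P$. The main obstacle will be this closed-point case: the lifts $\widetilde{a}_j$ must be coordinated so that, after scaling by $r_P$, the residue form at $\kappa(P)$ genuinely reflects the prescribed isotropic data on both branches meeting at $P$, and this is precisely where the full strength of Corollary \ref{surjective map on global sections} together with the unit/uniformizer control in Proposition \ref{defining functions on components}(b)--(c) is needed.
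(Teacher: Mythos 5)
Your proposal is correct and follows essentially the same route as the paper: diagonalize $q_1$ with a distinguished entry $\pm 1$, verify the hypotheses of Proposition \ref{defining functions on components} via Lemma \ref{universal form on component of closed fiber} and Remark \ref{form with small valuation}, glue the resulting $a_{i,j}$ into rational functions on $X^{\mathrm{red}}$ and lift through Corollary \ref{surjective map on global sections}, then check (a) directly and (b) at generic points by Springer's Theorem and at closed points by Lemma \ref{element that reduces to uniformizer} plus Lemma \ref{Hensel's Lemma for q forms}. The only minor detours are the unnecessary appeal to Lemma \ref{represents 1 over residue field} at closed points (the paper applies Springer's Theorem directly to the residue form $q_i$, whose isotropy over $\kappa(\eta_i)_{v_P}$ is already known), and the ``modulo squares'' hedge on the lifted residues, which the paper sidesteps by arranging the exact equalities $\overline{\widetilde{a}}_{r,\eta_i} = a_{i,r}$ (no ratios of sections are needed once $\widehat{D}$ is replaced by a sufficiently large multiple).
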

\begin{proof}
Since $q_1$ is universal over $\kappa(\eta_1)$, it represents $-1$ over $\kappa(\eta_1)$, so $q_1 \simeq \langle -1, a_{1,2}, \ldots, a_{1,d} \rangle$ for some $a_{1,2}, \ldots, a_{1,d} \in \kappa(\eta_1)^{\times}$. Furthermore, since $X_1$ is a regular curve over $k$, for all closed points $P \in X_1$ the local ring $\mathcal{O}_{X_1, P}$ is a discrete valuation ring with associated discrete valuation $v_P$ on~$\kappa(\eta_1)$. In particular, if $s \geq 2$ and we let $\mathcal{P}_1$ be the non-empty finite set of closed points $P \in X_1$ at which $X_1$ intersects another component of $X^{\text{red}}$, then after scaling the entries of $q_1$ by non-zero squares in $\kappa(\eta_1)$, we may assume that $v_P(a_{1,r}) \in \{0, 1\}$ for all $P \in \mathcal{P}_1$ and all $r = 2, \ldots, d$ (see Remark~\ref{form with small valuation}). Moreover, by applying Lemma \ref{universal form on component of closed fiber} to the curve $X_1$, we conclude that $q_1$ is isotropic over $\kappa(\eta_1)_{v_P}$ for all closed points $P \in X_1$ (in particular, for all closed points $P \in \mathcal{P}_1$ if $s \geq 2$).

If $s \geq 2$, then by applying Proposition \ref{defining functions on components} to $X^{\text{red}}$, for each $i = 2, \ldots, s$ we can find elements $a_{i,2}, \ldots, a_{i,d} \in \kappa(\eta_i)^{\times}$ such that the quadratic form $q_i = \langle -1, a_{i,2}, \ldots, a_{i,d} \rangle$ is isotropic over $\kappa(\eta_i)$ and for all closed points $P \in X_i \cap X_j$ for some $i \ne j$, we have $\overline{a}_{i,r,P} = \overline{a}_{j,r,P} \in \kappa(P)$ for all $r = 2, \ldots, d$. For each $r = 2, \ldots, d$, if we let $a_r$ be defined by $a_{i,r}$ on $X_i$, then $a_r$ is a well-defined rational function on $X^{\text{red}}$ since the $a_{i,r}$ agree at intersection points.

Let $S$ be a non-empty finite set of closed points of $\mathscr{X}$ that meets each irreducible component of~$X^{\text{red}}$ non-trivially and contains all points $P$ such that $v_P(a_{i,r}) \ne 0$ for some $i \in \{1, \ldots, s\}$ and some $r \in \{2, \ldots, d\}$. By Corollary \ref{surjective map on global sections}, there is an effective Cartier divisor $\widehat{D}$ on $\mathscr{X}$ whose support meets $X^{\text{red}}$ precisely at $S$, with restriction~$D$ to the closed fiber $X$ of $\mathscr{X}$ such that the divisor $\iota^*D$ on $X^{\text{red}}$ has support $S$ for the closed immersion $\iota: X^{\text{red}} \to X$. Replacing $\widehat{D}$ (and consequently $D$ and~$\iota^*D$) with a sufficiently large multiple, we have $a_r \in H^0(X^{\text{red}}, \mathcal{O}_{X^{\text{red}}}(\iota^*D))$ for all $r = 2, \ldots, d$ by our choice of~$S$, and the map $H^0(\mathscr{X}, \mathcal{O}_{\mathscr{X}}(\widehat{D})) \to H^0(X^{\text{red}}, \mathcal{O}_{X^{\text{red}}}(\iota^*D))$ is surjective by Corollary~\ref{surjective map on global sections}. Therefore there are elements $\widetilde{a}_2, \ldots, \widetilde{a}_d \in F^{\times}$ such that $\widetilde{a}_r \in \mathcal{O}_{\mathscr{X}, \eta_i}^{\times}$ with $\overline{\widetilde{a}}_{r, \eta_i} = a_{i,r} \in \kappa(\eta_i)^{\times}$ for all $i = 1, \ldots, s$ and all $r = 2, \ldots, d$.

Now let $\widetilde{q}$ be the quadratic form defined over $F$ by $\widetilde{q} = \left\langle -1, \widetilde{a}_2, \ldots, \widetilde{a}_d \right\rangle$. For each $i = 1, \ldots, s$, if we let $\widetilde{q}_{\eta_i}$ denote $\widetilde{q}$ considered as a form over $F_{\eta_i}$, then the first residue form of $\widetilde{q}_{\eta_i}$ is isometric to $q_i$ over $\kappa(\eta_i)$, and the second residue form of~$\widetilde{q}_{\eta_i}$ is trivial, proving (a). Moreover, because $q_i$ is isotropic over $\kappa(\eta_i)$ for all $i = 2, \ldots, s$, Springer's Theorem implies that $\widetilde{q}$ is isotropic over~$F_{\eta_i}$ for all $i = 2, \ldots, s$. This proves part (b) for non-closed points $P \in X^{\text{red}} \setminus \{\eta_1\}$.

To complete the proof, we must show that $\widetilde{q}$ is isotropic over $F_P$ for all closed points $P \in$~$X^{\text{red}}$. Let $P \in X^{\text{red}}$ be an arbitrary closed point. Then $P \in X_i$ for some $i \in \{1, \ldots, s\}$. The ring $\mathcal{O}_{X_i,P}$ is a discrete valuation ring with associated discrete valuation $v_P$ on $\kappa(\eta_i)$. By Lemma \ref{element that reduces to uniformizer}, there is an element $r_P \in \widehat{R}_P$ whose non-zero image $\overline{r}_P$ in $\widehat{\mathcal{O}}_{X_i, P}$ is a uniformizer for $v_P$.  So by multiplying and dividing the entries of~$\widetilde{q}$ by even powers of $r_P$, over $F_P$ we can write
\[
	\widetilde{q} \simeq \widetilde{q}_P = \widetilde{q}_{1,P} \perp r_P \cdot \widetilde{q}_{2,P},
\]
where the entries of $\widetilde{q}_{1,P}, \widetilde{q}_{2,P}$ all belong to $\widehat{R}_P^{\times}$. Reducing modulo~$\eta_i$, over $\kappa(\eta_i)_{v_P}$ we have
\[
	q_i \simeq q_P = q_{1,P} \perp \overline{r}_P \cdot q_{2,P},
\]
where the entries of $q_{1,P}, q_{2,P}$ are all units in $\widehat{\mathcal{O}}_{X_i,P}$. The quadratic form $q_i$ is isotropic over $\kappa(\eta_i)_{v_P}$, thus so is $q_P$. So by Springer's Theorem, at least one of the residue forms $\overline{q}_{1,P}$ or $\overline{q}_{2,P}$ of $q_P$ must be isotropic over $\kappa(P)$. The entries of $\overline{q}_{1,P}, \overline{q}_{2,P}$ are the images of the entries of $\widetilde{q}_{1,P},  \widetilde{q}_{2,P}$, respectively, in $\kappa(P)^{\times}$. So by Lemma~\ref{Hensel's Lemma for q forms}, we conclude that one of the quadratic forms $\widetilde{q}_{1,P}$ or~$\widetilde{q}_{2,P}$ is isotropic over $F_P$, and therefore $\widetilde{q}_P \simeq \widetilde{q}$ is isotropic over $F_P$ as well. 
\end{proof} 

\begin{prop}
\label{au contains union}
Let $T$ be a complete discrete valuation ring with fraction field $K$ and residue field $k$ of characteristic $\ne 2$ such that $m_s(k) = u_s(k) < \infty$. Let $\mathscr{X}$ be a regular connected projective $T$-curve whose closed fiber $X$ has irreducible components $X_1, \ldots, X_s$. For $1 \leq i \leq s$ let $\eta_i$ be the generic point of~$X_i$, and let $F$ be the function field of $\mathscr{X}$. Then
\[
	\bigcup_{i = 1}^s \left\{r_1 + r_2 \mid r_1, r_2 \in \emph{AU}(\kappa(\eta_i))\right\} \subseteq \emph{AU}(F).
\]
\end{prop}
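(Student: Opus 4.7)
The plan is to construct, for each $i \in \{1,\ldots,s\}$ and each pair $r_1, r_2 \in \au(\kappa(\eta_i))$, an anisotropic universal quadratic form of dimension $r_1 + r_2$ over $F$ by lifting anisotropic universal forms from $\kappa(\eta_i)$ to $F$ and assembling them into a Springer-type decomposition at $\eta_i$. Since $m_s(k) = u_s(k) < \infty$, Corollary~\ref{if strong m = strong u} gives $m_s(k) = u_s(k) = 2^n$ for some $n \geq 0$, and Lemma~\ref{u-inv of residue fields} yields $u(\kappa(\eta_j)) = 2^{n+1}$ for every $j$; hence $r_1, r_2 \leq 2^{n+1}$. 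After relabeling we may assume $i = 1$ and $r_2 \leq r_1$. Replacing $\mathscr{X}$ with a suitable blow-up at closed points of its closed fiber, we may also assume the reduced closed fiber is a normal-crossings union of regular $k$-curves; such blow-ups preserve the generic points $\eta_1, \ldots, \eta_s$ of the original irreducible components, as they reappear as generic points of the strict transforms. This places us in the setup of Proposition~\ref{lifting universal forms on nice closed fiber}.

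Fix anisotropic universal quadratic forms $\overline{\varphi}_1, \overline{\varphi}_2$ over $\kappa(\eta_1)$ of dimensions $r_1, r_2$. If $r_2 < 2^{n+1}$, apply Proposition~\ref{lifting universal forms on nice closed fiber} to $\overline{\varphi}_2$ to produce a form $\widetilde{\varphi}_2$ of dimension $r_2$ over $F$ with entries that are units in $\mathcal{O}_{\mathscr{X}, \eta_1}$ such that (i) the first residue of $\widetilde{\varphi}_2$ at $\eta_1$ is $\overline{\varphi}_2$ and the second residue is trivial, and (ii) $\widetilde{\varphi}_2$ is isotropic over $F_P$ for every $P \neq \eta_1$ in the reduced closed fiber. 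If instead $r_1 = r_2 = 2^{n+1}$, let $\widetilde{\varphi}_2$ be any lift of $\overline{\varphi}_2$ to $F$ whose entries are units in $\mathcal{O}_{\mathscr{X}, \eta_1}$ (such lifts exist since $\mathcal{O}_{\mathscr{X}, \eta_1}$ is a DVR with residue field $\kappa(\eta_1)$). In either case, choose a similar unit lift $\widetilde{\varphi}_1$ of $\overline{\varphi}_1$, fix $\pi \in F^\times$ with $v_{\eta_1}(\pi) = 1$, and set
\[
q \;=\; \widetilde{\varphi}_1 \perp \pi \cdot \widetilde{\varphi}_2.
\]

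Because the entries of $\widetilde{\varphi}_1, \widetilde{\varphi}_2$ are units at $\eta_1$ with residues giving $\overline{\varphi}_1, \overline{\varphi}_2$, the Springer decomposition of $q$ over $F_{\eta_1}$ has residue forms $\overline{\varphi}_1$ and $\overline{\varphi}_2$, both anisotropic and universal of positive dimension. Lemma~\ref{anisotropic universal residue forms} therefore renders $q$ anisotropic and universal over $F_{\eta_1}$; in particular $q$ is anisotropic over $F$. To upgrade universality from $F_{\eta_1}$ to $F$, consider the two cases underlying the construction of $\widetilde{\varphi}_2$. When $r_2 < 2^{n+1}$, the subform $\pi \cdot \widetilde{\varphi}_2$ of $q$ is isotropic over every $F_P$ with $P \neq \eta_1$ in the closed fiber, so $q$ is isotropic, and hence universal, at each such $F_P$; since $\dim(q \perp \langle -a\rangle) = r_1 + r_2 + 1 \geq 5 \geq 3$, combining Lemma~\ref{everywhere locally universal implies globally universal} with \cite[Theorem~9.3]{hhk15} then yields universality of $q$ over $F$. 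When $r_1 = r_2 = 2^{n+1}$, Proposition~\ref{u-inv if m_s = u_s} gives $u(F) = 2^{n+2} = \dim q$, so $q \perp \langle -a \rangle$ has dimension exceeding $u(F)$ and is therefore isotropic for every $a \in F^\times$, whence the First Representation Theorem makes $q$ universal over $F$.

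The principal subtlety is the boundary case $r_1 = r_2 = 2^{n+1}$, in which Proposition~\ref{lifting universal forms on nice closed fiber} does not apply; the key observation handling it is that $r_1 + r_2$ then equals $u(F)$, so once $q$ is shown to be anisotropic at $\eta_1$ via Springer's Theorem, universality over $F$ follows automatically from the $u$-invariant.
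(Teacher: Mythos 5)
Your proposal is correct and takes essentially the same approach as the paper: reduce via blow-up to a normal-crossings model, lift an anisotropic universal form of sub-maximal dimension from a component $\kappa(\eta_i)$ using Proposition~\ref{lifting universal forms on nice closed fiber}, assemble a Springer decomposition $\widetilde{\varphi}_1 \perp \pi\cdot\widetilde{\varphi}_2$ at $\eta_i$, and verify local universality at every point of the closed fiber to invoke Lemma~\ref{everywhere locally universal implies globally universal} together with \cite[Theorem~9.3]{hhk15}. The only cosmetic differences are that you apply Proposition~\ref{lifting universal forms on nice closed fiber} to the piece of smaller dimension $r_2$ rather than (as the paper does) assuming WLOG $r_1 < 2^{n+1}$ and applying it to the first piece, and that you handle the boundary case $r_1 = r_2 = 2^{n+1}$ by an explicit construction rather than the paper's shorter observation that $u(F) = 2^{n+2} \in \au(F)$ automatically.
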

\begin{proof}
We first observe that by taking a suitable blow-up $\mathscr{X}' \to \mathscr{X}$ we obtain a regular connected projective $T$-curve~$\mathscr{X}'$ with function field $F$ whose reduced closed fiber $X'^{\text{red}}$ is a union of $s' \geq s$ regular irreducible curves $X_1', \ldots, X_{s'}'$ over $k$, with normal crossings. For $1 \leq j \leq s'$ let $\xi_j$ be the generic point of $X_j'$. Since $\mathscr{X}'$ is a blow-up of the regular $T$-curve $\mathscr{X}$, for each $i \in \{1, \ldots, s\}$ there is some $j \in$~$\{1, \ldots, s'\}$ such that the local rings $\mathcal{O}_{\mathscr{X}, \eta_i}$ and  $\mathcal{O}_{\mathscr{X}', \xi_j}$ are isomorphic. Therefore $\kappa(\eta_i) \cong \kappa(\xi_j)$, hence $\au(\kappa(\eta_i)) = \au(\kappa(\xi_j))$. The proposition will therefore be proven if we show that $\bigcup_{j = 1}^{s'} \{r_1 + r_2 \mid r_1, r_2 \in \au(\kappa(\xi_j))\} \subseteq \au(F)$.

To that end, we first note that since $m_s(k) = u_s(k) < \infty$, we have $m_s(k) = u_s(k) = 2^n$ for some integer $n \geq 0$ by Corollary \ref{if strong m = strong u}. Hence $u(\kappa(\xi_j)) = 2^{n+1} \in \au(\kappa(\xi_j))$ for all $j = 1, \ldots, s'$ by Lemma~\ref{u-inv of residue fields}. Moreover, since $m_s(k) = u_s(k) = 2^n$, we have $m_s(K) = u_s(K) = 2^{n+1}$ by Theorem \ref{strong m of cdvf}, so $u(F) = 2^{n+2} = 2^{n+1} + 2^{n+1} \in \au(F)$ by Proposition \ref{u-inv if m_s = u_s}. Therefore for all $j \in \{1, \ldots, s'\}$ such that $m(\kappa(\xi_j)) = u(\kappa(\xi_j)) = 2^{n+1}$ we have $\{r_1 + r_2 \mid r_1, r_2 \in \au(\kappa(\xi_j))\} = \left\{2^{n+2}\right\} \subseteq \au(F)$. This completes the proof if $m_s(k) = u_s(k) = 1$. Indeed, by Lemma \ref{m at least 2 for fg trdeg 1} we have $m(\kappa(\xi_j)) \geq 2$ for all $j = 1, \ldots, s'$, hence $m(\kappa(\xi_j)) = u(\kappa(\xi_j)) = 2$ for all $j = 1, \ldots, s'$.

To complete the proof, we must consider the case when $m_s(k) = u_s(k) = 2^n$ for $n \geq 1$ and there is at least one $j \in \{1, \ldots, s'\}$ such that $2 \leq m(\kappa(\xi_j)) < u(\kappa(\xi_j)) = 2^{n+1}$. Suppose this is the case. After renumbering the components if necessary, we may assume $2 \leq m(\kappa(\xi_1)) < u(\kappa(\xi_1)) = 2^{n+1}$. We need to show that $r_1 + r_2 \in \au(F)$ for all $r_1, r_2 \in \au(\kappa(\xi_1))$. If $r_1 = r_2 = u(\kappa(\xi_1)) = 2^{n+1}$, then we have already seen that $r_1 + r_2 = 2^{n+2} = u(F) \in \au(F)$. So suppose we are given $r_1, r_2 \in \au(\kappa(\xi_1))$ with at least one of $r_1, r_2$ less than $2^{n+1}$. Without loss of generality, we may assume $2 \leq r_1 < 2^{n+1}$. Let $q_1$ be an $r_1$-dimensional anisotropic universal quadratic form over $\kappa(\xi_1)$. Then by Proposition \ref{lifting universal forms on nice closed fiber} there is an $r_1$-dimensional quadratic form $\widetilde{q}_1$ over $F$ such that the first residue form of $\widetilde{q}_{1,\xi_1}$ over $F_{\xi_1}$ is isometric to $q_1$ over $\kappa(\xi_1)$, the second residue form of $\widetilde{q}_{1,\xi_1}$ is trivial, and $\widetilde{q}_1$ is isotropic over~$F_P$ for all points $P \in X'^{\text{red}} \setminus \{\xi_1\}$. 

Next, let $q_2 = \langle b_1, \ldots, b_{r_2} \rangle$ be any $r_2$-dimensional anisotropic universal quadratic form over~$\kappa(\xi_1)$, and for unit lifts $\widetilde{b}_1, \ldots, \widetilde{b}_{r_2} \in \mathcal{O}_{\mathscr{X}', \xi_1}^{\times}$ of $b_1, \ldots, b_{r_2}$ to $F$, let $\widetilde{q}_2 = \langle \widetilde{b}_1, \ldots, \widetilde{b}_{r_2} \rangle$. Let $\pi_{\xi_1}$ be a uniformizer for the discrete valuation $v_{\xi_1}$ on $F$, and consider the $(r_1 + r_2)$-dimensional quadratic form $\widetilde{\varphi}$ over $F$ defined by
\[
	\widetilde{\varphi} = \widetilde{q}_1 \perp \pi_{\xi_1} \cdot \widetilde{q}_2.
\]
Over $F_{\xi_1}$, the first and second residue forms of $\widetilde{\varphi}_{\xi_1}$ are isometric to the anisotropic universal forms~$q_1$ and $q_2$, respectively, over $\kappa(\xi_1)$. Therefore $\widetilde{\varphi}_{\xi_1}$ is anisotropic and universal over $F_{\xi_1}$ by Lemma \ref{anisotropic universal residue forms}. For all points $P \in X'^{\text{red}} \setminus \{\xi_1\}$, since $\widetilde{q}_1$ is isotropic over $F_P$, so is $\widetilde{\varphi}$. So $\widetilde{\varphi}$ is universal over~$F_Q$ for all points $Q \in X'^{\text{red}}$ with $\dim \widetilde{\varphi} = r_1 + r_2 \geq 4$, hence $\widetilde{\varphi}$ is universal over $F$ by Lemma \ref{everywhere locally universal implies globally universal} and \cite[Theorem~9.3]{hhk15}. Furthermore, $\widetilde{\varphi}$ is anisotropic over $F$ since it is anisotropic over $F_{\xi_1}$. So~$\widetilde{\varphi}$ is anisotropic and universal over $F$ with $\dim \widetilde{\varphi} = r_1 + r_2$. This construction can be done for any $r_1, r_2 \in \au(\kappa(\xi_1))$ with $2 \leq r_1 < 2^{n+1}$, so we have proven
\[
	\{r_1 + r_2 \mid r_1, r_2 \in \au(\kappa(\xi_1))\} \subseteq \au(F).
\]
The above argument can be applied to any $\kappa(\xi_j)$ such that $2 \leq m(\kappa(\xi_j)) < u(\kappa(\xi_j)) = 2^{n+1}$, so the proof of the proposition is complete.
\end{proof}

We can now prove
\begin{theorem}
\label{au equals union}
Let $T$ be a complete discrete valuation ring with residue field $k$ of characteristic $\ne 2$ such that $m_s(k) = u_s(k) < \infty$. Let $\mathscr{X}$ be a regular connected projective $T$-curve with closed fiber~$X$. Let $X_1, \ldots, X_s$ be the irreducible components of $X$, and for $1 \leq i \leq s$ let $\eta_i$ be the generic point of $X_i$. Let $\Gamma$ be the reduction graph of $\mathscr{X}$, and let $F$ be the function field of $\mathscr{X}$. Then
\[
	\emph{AU}(F) = \begin{cases}
	\{2\} \cup \bigcup_{i = 1}^s \left\{r_1 + r_2 \mid r_1, r_2 \in \emph{AU}(\kappa(\eta_i))\right\} &\text{ if $\Gamma$ is not a tree}, \\
		\bigcup_{i = 1}^s \left\{r_1 + r_2 \mid r_1, r_2 \in \emph{AU}(\kappa(\eta_i))\right\} &\text{ if $\Gamma$ is a tree}.
	\end{cases}
\]
\end{theorem}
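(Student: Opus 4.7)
The plan is to observe that the theorem is now essentially an assembly of the two preceding propositions in this section together with Lemma \ref{loop implies m=2}. The substantive work, namely both directions of containment, has already been carried out, and the remaining task is just to glue the pieces correctly in the two cases of the case split.

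First, I would appeal to Proposition \ref{au contained in union}, which already establishes the $\subseteq$ direction in both branches of the case split. So only the reverse containment remains in each case.

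For the $\supseteq$ direction, Proposition \ref{au contains union} provides
\[
	\bigcup_{i = 1}^s \left\{r_1 + r_2 \mid r_1, r_2 \in \text{AU}(\kappa(\eta_i))\right\} \subseteq \text{AU}(F),
\]
and crucially this was proved without any hypothesis on the reduction graph $\Gamma$. This is already the desired reverse containment when $\Gamma$ is a tree, so that case is complete. When $\Gamma$ is not a tree, we additionally need $2 \in \text{AU}(F)$; this is immediate from Lemma \ref{loop implies m=2}, which yields $m(F) = 2$ and hence guarantees the existence of a two-dimensional anisotropic universal quadratic form over $F$.

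I expect no real obstacle at this stage. The difficult constructive direction (lifting anisotropic universal forms from components of the reduced closed fiber up to $F$) is handled inside Proposition \ref{au contains union} via Proposition \ref{lifting universal forms on nice closed fiber}, and the case split in the theorem statement already mirrors the case split in Proposition \ref{au contained in union}, so no additional combinatorial analysis of $\Gamma$ is required. The one thing worth checking in passing is that the two cases of the statement are genuinely disjoint, i.e., that $2$ is not already in the union when $\Gamma$ is a tree; this follows from Lemma \ref{m at least 2 for fg trdeg 1}, which forces $m(\kappa(\eta_i)) \geq 2$ and hence every element of the union to be at least $4$, but this observation is not strictly needed for the equality claimed in the theorem.
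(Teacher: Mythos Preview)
Your proposal is correct and matches the paper's own proof almost verbatim: the paper likewise invokes Proposition~\ref{au contained in union} for $\subseteq$, Proposition~\ref{au contains union} for the union inclusion in $\supseteq$, and the equivalence $2\in\au(F)\Leftrightarrow\Gamma$ not a tree (citing Proposition~\ref{m=2 iff loop} rather than Lemma~\ref{loop implies m=2}, but your one-directional citation suffices here).
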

\begin{proof}
The containment $\subseteq$ follows from Proposition \ref{au contained in union}. Next, we know that $2 \in \au(F)$ if and only if $m(F) = 2$, and by Proposition \ref{m=2 iff loop}, $m(F) = 2$ if and only if $\Gamma$ is not a tree. Finally, by Proposition~\ref{au contains union} we have $\bigcup_{i = 1}^s \left\{r_1 + r_2 \mid r_1, r_2 \in \au(\kappa(\eta_i))\right\} \subseteq \au(F)$. We have both containments, which proves the desired equality.
\end{proof}

Recall that a semi-global field has $m$-invariant 2 if and only if the reduction graph of a regular model is not a tree (see Proposition \ref{m=2 iff loop}). We finish this section by showing how Theorem \ref{au equals union} can be used to calculate the $m$-invariant of a semi-global field with a regular model whose reduction graph is a tree.

\begin{cor}
\label{m-inv of sg field is min of m-inv of residue fields}
Let $T$ be a complete discrete valuation ring with residue field $k$ of characteristic $\ne 2$ such that $m_s(k) = u_s(k) < \infty$. Let $\mathscr{X}$ be a regular connected projective $T$-curve with closed fiber $X$. Let $X_1, \ldots, X_s$ be the irreducible components of $X$, and for $1 \leq i \leq s$ let $\eta_i$ be the generic point of $X_i$. Assume the reduction graph of $\mathscr{X}$ is a tree, let $F$ be the function field of $\mathscr{X}$, and let $m^* = \min_{i = 1, \ldots, s} \left\{m(\kappa(\eta_i))\right\}$. Then $m(F) = 2m^*$.
\end{cor}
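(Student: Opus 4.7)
The plan is to read off the result directly from Theorem \ref{au equals union}, using the fact that $m(F)$ is by definition the minimum element of $\au(F)$. Since the reduction graph of $\mathscr{X}$ is assumed to be a tree, Theorem \ref{au equals union} gives
\[
\au(F) = \bigcup_{i=1}^s \{r_1 + r_2 \mid r_1, r_2 \in \au(\kappa(\eta_i))\}.
\]

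First I would take the minimum of both sides. Because $\min$ commutes with unions, this yields
\[
m(F) = \min \au(F) = \min_{1 \leq i \leq s} \Bigl( \min\bigl\{r_1 + r_2 \mid r_1, r_2 \in \au(\kappa(\eta_i))\bigr\}\Bigr).
\]
Next, for each $i$, the inner minimum is achieved by taking $r_1 = r_2 = m(\kappa(\eta_i))$, since $m(\kappa(\eta_i))$ is by definition the smallest element of $\au(\kappa(\eta_i))$ (and $\au(\kappa(\eta_i))$ is non-empty because $m_s(k) = u_s(k) < \infty$ forces $u(\kappa(\eta_i)) < \infty$, e.g.\ by Lemma \ref{u-inv of residue fields}). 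Thus
\[
\min\bigl\{r_1 + r_2 \mid r_1, r_2 \in \au(\kappa(\eta_i))\bigr\} = 2m(\kappa(\eta_i)).
\]

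Finally, combining these two steps,
\[
m(F) = \min_{1 \leq i \leq s} 2m(\kappa(\eta_i)) = 2 \min_{1 \leq i \leq s} m(\kappa(\eta_i)) = 2m^*,
\]
as desired. There is no real obstacle here; the only mild point to mention is that each $\au(\kappa(\eta_i))$ is non-empty so that the inner minimum makes sense, which is immediate from the finiteness of $u(\kappa(\eta_i))$ guaranteed by the hypothesis $m_s(k) = u_s(k) < \infty$ together with Proposition \ref{u-inv if m_s = u_s}.
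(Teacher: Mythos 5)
Your proof is correct and takes essentially the same route as the paper: both read off $m(F)$ directly from the description of $\au(F)$ in Theorem \ref{au equals union}, using that $m$ is the minimum of $\au$. The paper splits into two inequalities ($m(F)\geq 2m^*$ and $m(F)\leq 2m^*$), whereas you compute the minimum in one pass, but the argument is the same.
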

\begin{proof}
Let $q$ be any anisotropic universal quadratic form over $F$. Since $m_s(k) = u_s(k) < \infty$ and the reduction graph of $\mathscr{X}$ is a tree, Theorem \ref{au equals union} implies that
\[
	\dim q \in \bigcup_{i = 1}^s \left\{r_1 + r_2 \mid r_1, r_2 \in \au(\kappa(\eta_i))\right\}.
\]
For each $i = 1, \ldots, s$ and $r_1, r_2 \in \au(\kappa(\eta_i))$ we have $r_1 + r_2 \geq 2m(\kappa(\eta_i))$ since $r_1, r_2 \geq m(\kappa(\eta_i))$. So by our definition of $m^*$, we have
$\dim q \geq 2m^*$. Since this holds for any anisotropic universal quadratic form over $F$, it follows that $m(F) \geq 2m^*$. 

To prove the opposite inequality, let $i^* \in \{1, \ldots, s\}$ be such that $m(\kappa(\eta_{i^*})) = m^*$. By Theorem~\ref{au equals union}, there exists a $2m^*$-dimensional anisotropic universal quadratic form over $F$. Therefore $m(F) \leq 2m^*$. We have both inequalities, hence $m(F) = 2m^*$. 
\end{proof}

\section{Semi-global fields over $n$-local fields}
\label{classifying curves}
Let $k$ be a field and let $n \geq 1$ be a positive integer. Recall that a complete discretely valued field~$K$ is called an \textit{$n$-local field over $k$} if $K$ fits into a chain of fields $K = K_n, K_{n-1}, \ldots, K_1, K_0 = k$, where, for $i = 1, \ldots, n$, $K_i$ is a complete discretely valued field with residue field $K_{i-1}$. When we refer to the \textit{residue field} of $K$, we mean the field $K_{n-1}$. In this section, we define the \textit{layer} (see Definition~\ref{layer defin}) of a semi-global field $F$ over such a field $K$ in order to calculate $m(F)$. Throughout most of this section we will assume that the field $k$ has characteristic $\ne 2$ and satisfies $m_s(k) = u_s(k) < \infty$. This is a rather mild condition that is satisfied by, e.g., algebraically closed fields, finite fields, $n$-local fields over finite fields, and finitely generated transcendence degree one extensions of algebraically closed fields (see Examples \ref{strong m examples}(3)).

Let $k$ be a field and for any integer $n \geq 2$ let $K_n$ be an $n$-local field over $k$ with valuation ring~$T_n$ whose residue field $K_{n-1}$ has valuation ring $T_{n-1}$. Let $F$ be a semi-global field over $K_n$, and let~$\mathscr{X}$ be a regular model of $F$ over $T_n$ with reduction graph $\Gamma_{\mathscr{X}}$ and closed fiber $X$. Let $X_1, \ldots, X_s$ be the irreducible components of $X$, and for $1 \leq i \leq s$ let $\eta_i$ be the generic point of $X_i$. For each $i = 1, \ldots, s$ the field~$\kappa(\eta_i)$ is a finitely generated transcendence degree one extension of the $(n-1)$-local field $K_{n-1}$ over $k$, hence $\kappa(\eta_i)$ is a semi-global field over $K_{n-1}$. So we can find a regular model $\mathscr{Y}_i$ of $\kappa(\eta_i)$ over $T_{n-1}$. Each curve~$\mathscr{Y}_i$ has a reduction graph $\Gamma_{\mathscr{Y}_i}$ and a closed fiber~$Y_i$ with irreducible components, etc. The layer of $F$ captures the first step in this process in which we find a reduction graph that is not a tree.

To define the layer of a semi-global field, we use rooted trees. Recall that a \textit{rooted tree} is a tree in which one of the vertices has been designated the root. For each vertex $v$ in a rooted tree, there is a unique path from the root to $v$, and the \textit{level} of $v$ is the number of vertices in the path from the root to $v$, including both $v$ and the root. For example, the root of a rooted tree has level 1. A \textit{child} of a vertex $v$ in a rooted tree is a vertex~$w$ connected to~$v$ by an edge such that level$(w) = \text{level}(v) + 1$. We call $v$ a \textit{parent} of~$w$. In a rooted tree, the root is the only vertex without a parent; all other vertices have a unique parent.

Given a semi-global field $F$ over an $n$-local field over a field $k$, we will now inductively define a procedure to associate to $F$ a rooted tree $\tau_{F}$ whose root vertex is labeled by $F$ and whose vertices are each colored either black or white.
\begin{defin}
\label{assoc. rooted tree}
Let $k$ be any field, let $K_1$ be any 1-local field over $k$, and let $F_1$ be a semi-global field over $K_1$. We let $\tau_{F_1}$ be the rooted tree with one vertex, $v_{F_1}$. If the reduction graph of a regular model of $F_1$ is a tree, then color $v_{F_1}$ white, and if the reduction graph of a regular model of~$F_1$ is not a tree, then color $v_{F_1}$ black.

Now for some integer $n \geq 1$ assume we have defined a procedure to associate to any semi-global field $E$ over any $n$-local field over $k$ a rooted tree $\tau_{E}$ with root vertex $v_E$ and whose vertices are each colored either black or white. Let $K_{n+1}$ be an $(n+1)$-local field over $k$ with residue field $K_n$, and let $F_{n+1}$ be a semi-global field over $K_{n+1}$. Let $\mathscr{X}$ be a regular model of $F_{n+1}$ with closed fiber~$X$ and reduction graph $\Gamma$. Let $X_1, \ldots, X_s$ be the irreducible components of $X$, and for $1 \leq i \leq s$ let~$\eta_i$ be the generic point of $X_i$. For each $1 \leq i \leq s$ the field $\kappa(\eta_i)$ is a semi-global field over $K_n$, so by the induction hypothesis we can associate to $\kappa(\eta_i)$ a rooted tree $\tau_{\kappa(\eta_i)}$ with root vertex $v_{\kappa(\eta_i)}$. We then let~$\tau_{F_{n+1}}$ be the rooted tree with root vertex $v_{F_{n+1}}$ obtained by adding an edge to each $\tau_{\kappa(\eta_i)}$ connecting the root~$v_{\kappa(\eta_i)}$ of~$\tau_{\kappa(\eta_i)}$ to the vertex $v_{F_{n+1}}$ so that $v_{\kappa(\eta_i)}$ is a child of~$v_{F_{n+1}}$. Color $v_{F_{n+1}}$ white if~$\Gamma$ is a tree, and color $v_{F_{n+1}}$ black if $\Gamma$ is not a tree. For all other vertices in~$\tau_{F_{n+1}}$ belonging to~$\tau_{\kappa(\eta_i)}$ for some $i \in \{1, \ldots, s\}$ we retain the color of that vertex as a vertex in $\tau_{\kappa(\eta_i)}$.
\end{defin}
\begin{remark}
\label{assoc. tree may not be unique}
Let $F$ be a semi-global field over an $n$-local field $K$ over a field $k$ for some integer $n \geq 1$. Then a regular model of $F$ always exists, but this model is not unique. However, if $\mathscr{X}_1, \mathscr{X}_2$ are regular models of $F$ with reduction graphs $\Gamma_1, \Gamma_2$, respectively, then $\Gamma_1$ is a tree if and only if $\Gamma_2$ is a tree \cite[Corollary~7.8]{hhk15}. So if $n = 1$, then the rooted tree $\tau_{F}$ associated to~$F$ via Definition \ref{assoc. rooted tree} is unique, and if $n \geq 2$, then $\tau_{F}$ is not unique and depends on our choice of regular model. We will call any rooted tree associated to $F$ via Definition \ref{assoc. rooted tree} a \textit{rooted component tree} of~$F$.
\end{remark}

Next, we define the layer of a rooted component tree.
\begin{defin}
\label{tree layer}
For any integer $n \geq 1$ let $K$ be an $n$-local field over a field $k$. Let $F$ be a one-variable function field over $K$, and let $\tau$ be a rooted component tree of $F$. We define the \textit{layer} of~$\tau$ by $\layer(\tau) = \inf\{\text{level}(v) \mid v \text{ a black vertex in $\tau$}\}$, where the infimum of the empty set is $\infty$.
\end{defin}

The following lemma will be used to prove Proposition \ref{layer is well-defined}, which states that if $F$ is a semi-global field over an $n$-local field over a field $k$ of characteristic $\ne 2$ such that $m_s(k) = u_s(k) < \infty$, then all rooted component trees of $F$ have the same layer.
\begin{lemma}
\label{identifying vertices}
Let $k$ be a field of characteristic $\ne 2$ such that $m_s(k) = u_s(k) < \infty$. For any integer $n \geq 1$ let $K$ be an $n$-local field over $k$, and let $F_1, F_2$ be one-variable function fields over $K$ with $m(F_1) = m(F_2) > 2$. For $i = 1, 2$, let $\mathscr{X}_i$ be any regular model of~$F_i$ with closed fiber $X_i$ whose irreducible components $X_{i, 1}, \ldots, X_{i, s_i}$ have generic points $\eta_{i, 1}, \ldots, \eta_{i, s_i}$, respectively. Then there exist $j_1 \in \{1, \ldots, s_1\}$ and $j_2 \in \{1, \ldots, s_2\}$ such that 
$m(\kappa(\eta_{1, j_1})) = m(\kappa(\eta_{2, j_2})) \leq m(\kappa(\eta_{i, r_i}))$ for all $i = 1, 2$ and all $r_i = 1, \ldots, s_i$.
\end{lemma}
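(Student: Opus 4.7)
The plan is to reduce this lemma to a direct application of Corollary \ref{m-inv of sg field is min of m-inv of residue fields} applied separately to $F_1$ and $F_2$. The key point is that the hypothesis $m(F_1) = m(F_2) > 2$ combined with Proposition \ref{m=2 iff loop} forces the reduction graphs of any regular models of $F_1$ and $F_2$ to be trees, which is exactly the setting in which that corollary computes the $m$-invariant as twice the minimum of the $m$-invariants over the residue fields of the generic points of the components of the closed fiber.

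Before applying the corollary, I would need to verify that its hypotheses are met in this situation, namely that the residue field of $K$ satisfies $m_s = u_s < \infty$. Since $K$ is an $n$-local field over $k$, its residue field $K_{n-1}$ is an $(n-1)$-local field over $k$. Starting from the assumption $m_s(k) = u_s(k) < \infty$ (which by Corollary \ref{if strong m = strong u} equals some $2^m$), I would iterate along the chain $k = K_0 \subset K_1 \subset \cdots \subset K_{n-1}$: at each step Theorem \ref{strong m of cdvf} gives $m_s(K_{j+1}) = 2m_s(K_j)$, while \cite[Theorem~4.10]{hhk09} gives $u_s(K_{j+1}) = 2u_s(K_j)$. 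By induction, $m_s(K_{n-1}) = u_s(K_{n-1}) = 2^{m+n-1} < \infty$, so Corollary \ref{m-inv of sg field is min of m-inv of residue fields} applies to one-variable function fields over $K$ with regular models whose reduction graph is a tree.

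Putting these pieces together, since $m(F_i) > 2$, Proposition \ref{m=2 iff loop} ensures that the reduction graph of $\mathscr{X}_i$ is a tree for $i = 1, 2$, and Corollary \ref{m-inv of sg field is min of m-inv of residue fields} then yields
\[
    m(F_i) = 2 \min_{1 \leq r_i \leq s_i} m(\kappa(\eta_{i, r_i})).
\]
The hypothesis $m(F_1) = m(F_2)$ immediately forces these two minima to agree. Choosing any $j_1 \in \{1,\ldots,s_1\}$ and $j_2 \in \{1,\ldots,s_2\}$ at which the respective minima are attained produces the required indices.

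There is essentially no obstacle here beyond bookkeeping: the only subtle point is making sure that the hypothesis $m_s(k) = u_s(k) < \infty$ propagates to the residue field of $K$, which is handled by the inductive argument using Theorem \ref{strong m of cdvf} and \cite[Theorem~4.10]{hhk09}. Everything else is a direct quotation of Proposition \ref{m=2 iff loop} and Corollary \ref{m-inv of sg field is min of m-inv of residue fields}.
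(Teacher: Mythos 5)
Your proposal is correct and follows the same route as the paper: propagate $m_s = u_s < \infty$ to the residue field of $K$ by iterating Theorem \ref{strong m of cdvf} (together with \cite[Theorem~4.10]{hhk09}), use $m(F_i) > 2$ and Proposition \ref{m=2 iff loop} to conclude the reduction graphs are trees, and then apply Corollary \ref{m-inv of sg field is min of m-inv of residue fields} to both $\mathscr{X}_1$ and $\mathscr{X}_2$ and compare minima. The paper's proof is identical in substance, merely more terse in stating the inductive propagation step.
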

\begin{proof}
Since $m_s(k) = u_s(k) < \infty$, the residue field $\kappa$ of $K$ satisfies $m_s(\kappa) = u_s(\kappa) < \infty$ by Theorem \ref{strong m of cdvf} and induction. Because $m(F_1) = m(F_2) > 2$, the reduction graphs of $\mathscr{X}_1$ and $\mathscr{X}_2$ are trees, so by applying Corollary~\ref{m-inv of sg field is min of m-inv of residue fields} to $\mathscr{X}_1, \mathscr{X}_2$ we have 
\[
	\min_{r_1 = 1, \ldots, s_1} \{2m(\kappa(\eta_{1, r_1}))\} = m(F_1) = m(F_2) = \min_{r_2 = 1, \ldots, s_2} \{2m(\kappa(\eta_{2, r_2}))\}.
\]
For $i = 1, 2$, letting $j_i \in \{1, \ldots, s_i\}$ be such that $m(\kappa(\eta_{i, j_i})) = \min_{r_i = 1, \ldots, s_i} \{m(\kappa(\eta_{i, r_i}))\}$ proves the lemma.
\end{proof}
\begin{prop}
\label{layer is well-defined}
Let $k$ be a field of characteristic $\ne 2$ such that $m_s(k) = u_s(k) < \infty$. For any integer $n \geq 1$ let $K$ be an $n$-local field over $k$, and let $F$ be a one-variable function field over $K$. Then for any rooted component trees $\tau_1, \tau_2$ of $F$ we have $\emph{layer}(\tau_1) = \emph{layer}(\tau_2)$.
\end{prop}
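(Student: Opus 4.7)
The plan is to prove Proposition \ref{layer is well-defined} by induction on $n$, establishing the stronger statement that $\text{layer}(\tau_F)$ is determined by $m(F)$: namely, $\text{layer}(\tau_F) = \log_2 m(F)$ when $m(F) \leq 2^n$, and $\text{layer}(\tau_F) = \infty$ otherwise. Since $m(F)$ is a function of $F$ alone, well-definedness follows immediately. First I would observe that, by Theorem \ref{strong m of cdvf} and iterated induction, every $i$-local field $K_i$ over $k$ satisfies $m_s(K_i) = u_s(K_i) < \infty$, so that Corollaries \ref{m at least 2 for sg field} and \ref{m-inv of sg field is min of m-inv of residue fields} are available at every level of the tower, and the $m$-invariants of all the semi-global fields that appear are at least $2$ and are powers of $2$.

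The base case $n=1$ is immediate: the rooted component tree has a single vertex, which by Proposition \ref{m=2 iff loop} is black iff $m(F) = 2$, so $\text{layer}(\tau_F) = 1 = \log_2 m(F)$ when $m(F) = 2$, and $\text{layer}(\tau_F) = \infty$ when $m(F) > 2^1$. For the inductive step, assume the formula holds for semi-global fields over $(n-1)$-local fields. Let $\tau$ be a rooted component tree of $F$ built from a regular model $\mathscr{X}$ whose closed fiber's irreducible components have generic points $\eta_1, \ldots, \eta_s$. If $m(F) = 2$, then by Proposition \ref{m=2 iff loop} the reduction graph of every regular model of $F$ is not a tree, so the root is black and $\text{layer}(\tau) = 1$. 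If $m(F) > 2$, the root is white and $\text{layer}(\tau) = 1 + \min_j \text{layer}(\tau_{\kappa(\eta_j)})$. By Corollary \ref{m-inv of sg field is min of m-inv of residue fields}, $m(F) = 2 \min_j m(\kappa(\eta_j))$, and by the induction hypothesis each sub-layer $\text{layer}(\tau_{\kappa(\eta_j)})$ is a monotone function of $m(\kappa(\eta_j))$ alone: it equals $\log_2 m(\kappa(\eta_j))$ when $m(\kappa(\eta_j)) \leq 2^{n-1}$, and $\infty$ otherwise. Taking the minimum over $j$ then yields $1 + \log_2(m(F)/2) = \log_2 m(F)$ when $m(F) \leq 2^n$, and $\infty$ when all $m(\kappa(\eta_j)) > 2^{n-1}$, i.e., when $m(F) > 2^n$.

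The main obstacle is that a bare induction on the well-definedness statement does not propagate: two different regular models of $F$ produce different tuples of residue fields $\kappa(\eta_j)$, so one cannot directly apply the induction hypothesis component-by-component. Lemma \ref{identifying vertices} guarantees that the minima of the $m$-invariants across models match, but to exploit this requires that the layer of a sub-tree depend only on the $m$-invariant of its associated field, which is precisely the stronger formula carried through the induction. Corollary \ref{m-inv of sg field is min of m-inv of residue fields} is what translates the combinatorial ``minimum over children'' into the single arithmetic invariant $m(F)/2$, closing the inductive loop.
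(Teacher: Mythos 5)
Your proposal is correct and establishes the well-definedness via a genuinely different and arguably cleaner route than the paper. The paper proves the proposition directly by comparing two rooted component trees $\tau_1,\tau_2$ side by side: starting at the root, it iteratively applies Lemma \ref{identifying vertices} and Corollary \ref{m-inv of sg field is min of m-inv of residue fields} to locate, at each level $j$, vertices $v_{E_{1,j}}\in\tau_1$ and $v_{E_{2,j}}\in\tau_2$ whose associated fields have the same minimal $m$-invariant among level-$j$ vertices, and then argues that the level at which this minimal $m$-invariant drops to $2$ is the same in both trees. Your proof instead carries through the induction a closed formula, $\mathrm{layer}(\tau) = j$ iff $m(F)=2^j$ with $1\le j\le n$, and $\mathrm{layer}(\tau)=\infty$ iff $m(F)>2^n$, which makes $\mathrm{layer}(\tau)$ manifestly a function of $m(F)$ alone; well-definedness is then immediate. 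This sidesteps Lemma \ref{identifying vertices} entirely, since Corollary \ref{m-inv of sg field is min of m-inv of residue fields} already tells you that $\min_j m(\kappa(\eta_j)) = m(F)/2$ is intrinsic to $F$, and the monotonicity of the layer-as-function-of-$m$ lets you commute $\min$ past the formula. What you gain is a stronger statement: you prove (under only $m_s(k)=u_s(k)<\infty$) the ``not fully arboreal'' half of Proposition \ref{m-inv if layer = j}, which the paper only later establishes under the additional hypothesis that $m(L)=u(L)=2u_s(k)$ for all finitely generated transcendence-degree-one $L/k$. One small point of rigor worth making explicit: the phrase ``$\log_2 m(F)$'' presupposes that $m(F)$ is a power of $2$ when $m(F)\le 2^n$; this is not a priori obvious, but it does fall out of your induction (since $m(F)=2m^*$ and $m^*$ is inductively a power of $2$ when $\le 2^{n-1}$), so the induction hypothesis should be phrased to include that assertion.
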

\begin{proof} 
By definition, $\tau_1$ and $\tau_2$ have the same root vertex, $v_F$, which is colored black if and only if the reduction graph of a regular model of $F$ is not a tree. So if the reduction graph of a regular model of $F$ is not a tree, then $\layer(\tau_1) = \layer(\tau_2) = 1$, and if the reduction graph of a regular model of $F$ is a tree, then $\layer(\tau_1), \layer(\tau_2) > 1$. This completes the proof if $n = 1$.

So to finish the proof we may assume that $n \geq 2$ and the reduction graph of a regular model of~$F$ is a tree. For each $j \in \{1, \ldots, n\}$ the level $j$ vertices of $\tau_1, \tau_2$ correspond to one-variable function fields~$E_{j}$ over the same $(n-j+1)$-local field over $k$. Moreover, $v_{E_{j}}$ is black if and only if $m(E_{j}) = 2$. So for each~$j$ such that $2 \leq j \leq \min\{n, \layer(\tau_1)\}$, by iteratively applying Lemma \ref{identifying vertices} and Corollary~\ref{m-inv of sg field is min of m-inv of residue fields} we can find level $j$ vertices $v_{E_{1, j}}, v_{E_{2, j}}$ of $\tau_1, \tau_2$, respectively, such that
$m(E_{1,j}) = m(E_{2,j}) \leq m(E_j)$ for all level $j$ vertices $v_{E_j}$ of $\tau_1$ and $\tau_2$, and for $i = 1, 2$, $v_{E_{i,j}}$ is a child of $v_{E_{i, j-1}}$ (here taking $E_{i,1} = F$ for $i = 1, 2$). 

For each $1 \leq j < \layer(\tau_1)$, we know that all level $j$ vertices of $\tau_1$ are white, thus the corresponding fields have $m$-invariant larger than 2. In particular, $m(E_{1, j}) > 2$ for all $1 \leq j < \layer(\tau_1)$. Then by our choice of the vertices $v_{E_{2, j}}$ in $\tau_2$, we have $m(E_{2, j}) > 2$ and all level $j$ vertices of $\tau_2$ are white for $1 \leq j < \layer(\tau_1)$. In particular, if $\layer(\tau_1) = \infty$, then $\layer(\tau_2) = \infty$.

Finally, if $2 \leq \layer(\tau_1) \leq n$, then $\tau_1$ has a black vertex of level $\layer(\tau_1)$. This black vertex corresponds to a field with $m$-invariant 2, so by our choice of $v_{E_{1, \layer(\tau_1)}}$, we have $m(E_{1, \layer(\tau_1)}) = 2$. This then implies that $m(E_{2, \layer(\tau_1)}) = 2$, so the vertex $v_{E_{2, \layer(\tau_1)}}$ is black. All vertices in $\tau_2$ of level less than $\layer(\tau_1)$ are white by the previous paragraph, so $\layer(\tau_2) = \layer(\tau_1)$, as desired.
\end{proof}

We now define the layer of a semi-global field over an $n$-local field over a field $k$ of characteristic $\ne 2$ such that $m_s(k) = u_s(k) < \infty$. 
\begin{defin}
\label{layer defin}
For any integer $n \geq 1$ let $K$ be an $n$-local field over a field $k$ of characteristic $\ne 2$ such that $m_s(k) = u_s(k) < \infty$, let $F$ be a one-variable function field over $K$, and let $\tau$ be any rooted component tree of $F$. We define the \textit{layer} of $F$ by $\layer(F) = \layer(\tau)$. If $\layer(F) = \infty$, then we say that $F$ is \textit{fully arboreal}.
\end{defin}
In the context of Definition \ref{layer defin} (i.e., when $\Char k \ne 2$ and $m_s(k) = u_s(k) < \infty)$, $\layer(F)$ is well-defined by Proposition \ref{layer is well-defined}. It would be interesting to know whether the layer is independent of the choice of rooted component tree even without these assumptions on $k$.

\begin{remark}
\label{layer in terms of components}
For any integer $n \geq 2$ let $K$ be an $n$-local field over a field $k$ of characteristic $\ne 2$ such that $m_s(k) = u_s(k) < \infty$, and let~$F$ be a one-variable function field over $K$. Let~$\mathscr{X}$ be any regular model of $F$ with closed fiber $X$ whose irreducible components $X_1, \ldots, X_s$ have generic points $\eta_1, \ldots, \eta_s$, respectively. If $F$ is fully arboreal, then~$\kappa(\eta_i)$ is fully arboreal for all $i = 1, \ldots, s$. If $F$ is not fully arboreal and $1 < \layer(F) \leq n$, then the set $I_0 = \{i \in \{1, \ldots, s\} \mid \kappa(\eta_i) \text{ not fully arboreal}\}$ is non-empty and $\layer(F) = \min_{i \in I_0} \{\layer(\kappa(\eta_i))\} + 1$.
\end{remark}

\begin{example}
\label{fully arboreal example}
Let $S$ be a complete discrete valuation ring with fraction field $L$ and residue field~$\ell$. The curve $\mathbb{P}^1_S$ is a regular model of $L(x)$ with closed fiber~$\mathbb{P}^1_{\ell}$, so the reduction graph of~$\mathbb{P}^1_S$ is trivial, hence a tree. Thus for any integer $n \geq 1$, if $K$ is an $n$-local field over a field $k$ of characteristic $\ne 2$ such that $m_s(k) = u_s(k) < \infty$, then $K(x)$ is fully arboreal.
\end{example}

\begin{prop}
\label{curves with layer j}
Let $k$ be a field of characteristic $\ne 2$ such that $m_s(k) = u_s(k) < \infty$, and let $K$ be an $n$-local field over $k$ for some integer $n \geq 1$. Then for each $j = 1, \ldots, n$ there is a semi-global field $F_j$ over $K$ with $\emph{layer}(F_j) = j$.
\end{prop}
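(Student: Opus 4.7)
The plan is to induct on $j$, allowing $n$ to vary. For the base case $j = 1$, the existence of a semi-global field over $K$ with a regular model whose reduction graph is not a tree---and hence of layer $1$---is furnished by \cite[Lemma~5.1]{hkp}, as cited in Remark~\ref{sg fields with m<u}; that result applies to any complete discretely valued field, in particular to $K = K_n$.

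For the inductive step, fix $j$ with $2 \leq j \leq n$ and assume the statement for $(n-1, j-1)$. Applying it to the residue field $K_{n-1}$ of $K$, which is an $(n-1)$-local field over $k$ with $n - 1 \geq j - 1 \geq 1$, we obtain a semi-global field $E$ over $K_{n-1}$ with $\layer(E) = j-1$. I will then construct $F_j$ as the function field of a regular connected projective $T_n$-scheme $\mathscr{X}$ of relative dimension one whose closed fiber is a single smooth irreducible $K_{n-1}$-curve $C$ with function field $E$. Concretely, choose a smooth projective $K_{n-1}$-curve $C$ with function field $E$ and lift it to a smooth projective $T_n$-scheme $\mathscr{X}$ of relative dimension one. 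Such a lift exists by standard deformation theory: since $H^2(C, \mathcal{T}_{C/K_{n-1}}) = 0$ for dimension reasons, the deformations of $C$ over each Artinian quotient $T_n/\mathfrak{m}_{T_n}^r$ are unobstructed, and Grothendieck's existence theorem---applied to a lift of an ample line bundle on $C$---algebraizes the resulting formal deformation.

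It remains to verify $\layer(F_j) = j$. The closed fiber of $\mathscr{X}$ is the irreducible curve $C$, so the reduction graph consists of a single vertex and no edges, hence is trivially a tree; consequently the root $v_{F_j}$ of a rooted component tree $\tau_{F_j}$ is colored white. The unique child of $v_{F_j}$ is $v_E$, and below $v_E$ sits the entire rooted component tree of $E$ shifted up by one level. Since the minimum level of a black vertex in $\tau_E$ is $j-1$ by the induction hypothesis, the corresponding vertex in $\tau_{F_j}$ is at level $j$; no lower-level vertex of $\tau_{F_j}$ is black, so $\layer(F_j) = j$ as required.

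The principal technical obstacle is the lifting step, which is straightforward in characteristic zero but more delicate when the residue characteristic is positive and $K_{n-1}$ is imperfect, since the function field $E$ may then fail to admit a smooth projective $K_{n-1}$-model. This can be handled either by arranging the base-case witnesses so that at every stage of the induction the intermediate semi-global fields do admit smooth projective models (for instance, by always working with function fields of nodal curves of suitable genus), or by replacing smooth lifting with a general construction of regular projective arithmetic surfaces from regular (not necessarily smooth) projective curves over the residue field; either way the combinatorial verification of the layer computation in the previous paragraph is unaffected.
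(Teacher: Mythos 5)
Your strategy -- a base case producing a layer-$1$ semi-global field, followed by an induction in which one realizes a layer-$(j-1)$ field $E$ over $K_{n-1}$ as the function field of the closed fiber of a smooth projective $T_n$-scheme -- is the same strategy the paper uses, and the combinatorial verification that $\layer(F_j) = \layer(E) + 1$ is correct. The difference is that the paper makes the lifting step completely explicit: it takes $L_{i,j} = K_i(x)\bigl(\sqrt{x(x-a_{i,j})(x-1)(x-1-a_{i,j})}\,\bigr)$ for suitably chosen coefficients $a_{i,j}$ (a uniformizer at the bottom level, unit lifts above), so that at every stage the generic fiber is a smooth genus-one curve and "lifting" is just "lift the coefficient $a_{i-1,j}$ to a unit $a_{i,j} \in T_i^\times$." This sidesteps the issue that your deformation-theoretic version runs into.

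That issue is a genuine gap in your write-up, and you are right to flag it: the induction hypothesis hands you a semi-global field $E$ over $K_{n-1}$ of the right layer, but gives you no control over whether $E$ is the function field of a \emph{smooth} projective $K_{n-1}$-curve. When $K_{n-1}$ is imperfect -- which is exactly the situation when $k$ has positive characteristic and the tower $K_0, \ldots, K_n$ is equicharacteristic (e.g., $k$ finite, $K_i = k((t_1))\cdots((t_i))$) -- regular projective curves over $K_{n-1}$ need not be smooth, so your $H^2(C, \mathcal{T}_{C/K_{n-1}}) = 0$ unobstructedness argument is not available. The base case as you set it up does not fill this in either: \cite[Lemma~5.1]{hkp} produces a semi-global field with non-tree reduction graph, but says nothing about smooth models of that field. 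Of the two fixes you sketch, the first ("arrange the base-case witnesses so that smooth models propagate") is exactly what the paper does, by choosing an explicit hyperelliptic curve; to make your proof complete you would need to replace the appeal to \cite{hkp} by such an explicit witness and then strengthen the inductive hypothesis to carry a smooth model through every level. The second fix (lifting regular but non-smooth curves) is more delicate than the sentence suggests: for a regular non-smooth curve $C$ the cotangent complex $\mathbb{L}_{C/K_{n-1}}$ has a nontrivial $\mathcal{H}^{-1}$ supported on the non-smooth locus, so the obstruction group $\mathrm{Ext}^2(\mathbb{L}_{C/K_{n-1}}, \mathcal{O}_C)$ need not vanish just because $C$ is a curve, and unobstructedness would require an additional argument.
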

\begin{proof}
First consider a complete discrete valuation ring $S$ with residue field $\ell$ of characteristic $\ne 2$, fraction field $L$, and uniformizer $\pi$. Then the field $L' = L(x)(\sqrt{x(x-\pi)(x-1)(x-1-\pi)})$ has a regular model~$\mathscr{X}$ whose closed fiber has two irreducible components, each isomorphic to $\mathbb{P}^1_{\ell}$, meeting at two points. The reduction graph of $\mathscr{X}$ is not a tree.

Now let $K = K_n, K_{n-1}, \ldots, K_1, K_0 = k$ be the chain of complete discretely valued fields into which $K$ fits as an $n$-local field over $k$, and for each $i = 1, \ldots, n$ let $T_i$ be the valuation ring of $K_i$ and let $\pi_i$ be a uniformizer for $K_i$. We consider two cases for $j \geq 1$.

First, if $j = 1$ and we let $F_1 = K(x)(\sqrt{x(x-\pi_n)(x-1)(x- 1 - \pi_n)})$, then by the first paragraph of the proof, $\layer(F_1) = 1$.

The remaining case considers the situation when $n \geq 2$ and $2 \leq j \leq n$. We will inductively define elements $a_{i,j} \in K_i^{\times}$ for each $n-j+1 \leq i \leq n$ to define $F_j$. Let $j \in \{2, \ldots, n\}$ be given, let $a_{n-j+1,j} = \pi_{n-j+1} \in K_{n-j+1}^{\times}$, and for each $n-j+2 \leq i \leq n$ let $a_{i,j} \in T_i^{\times}$ be a unit lift of $a_{i-1,j} \in K_{i-1}^{\times}$. For each $n-j+1 \leq i \leq n$ let $L_{i,j} = K_i(x)\left(\sqrt{x(x-a_{i,j})(x-1)(x-1-a_{i,j})}\right)$. By the first paragraph of the proof, $\layer(L_{n-j+1,j}) = 1$. For each $i \in \{n-j+2, \ldots, n\}$ the field $L_{i,j}$ has a smooth model over $T_i$ whose closed fiber has function field $L_{i-1,j}$. Since $\layer(L_{n-j+1,j}) = 1$, by Remark \ref{layer in terms of components} we have $\layer(L_{i,j}) = \layer(L_{i-1,j}) + 1 = i - n + j$. So if we let $F_j = L_{n,j}$, then $\layer(F_j) = j$. 
\end{proof}

For most of the results in the remainder of this section, we will consider $n$-local fields over a field~$k$ of characteristic $\ne 2$ that, in addition to satisfying $m_s(k) =$~$u_s(k) < \infty$, also satisfies the second, more restrictive condition that $m(L) = u(L) = 2u_s(k)$ for all finitely generated field extensions $L$ over $k$ of transcendence degree one (here noting that $u(L) = 2u_s(k)$ follows from Proposition \ref{u-inv if m_s = u_s}). Examples of fields satisfying both of these conditions are fields $k$ of characteristic $\ne 2$ such that $m_s(k) = u_s(k) = 1$ (e.g.,~$k$ is algebraically closed), and finite fields of odd characteristic. Indeed, if $m_s(k) = u_s(k) = 1$, then for all finitely generated field extensions~$L$ over $k$ of transcendence degree one, $u(L) = 2$ by Proposition~\ref{u-inv if m_s = u_s}, and $m(L) \geq 2$ by Lemma \ref{m at least 2 for fg trdeg 1}, so $m(L) = u(L) = 2$. If~$k$ is a finite field of odd characteristic, then $m_s(k) = u_s(k) = 2$, and for all finitely generated field extensions $L$ over $k$ of transcendence degree one, $m(L) = u(L) = 4$ \cite[Example~2.8]{m-inv}. We also note that the second condition above on $k$ does not follow from the first, as illustrated by~$\Q_p$ for $p$ an odd prime (see Example \ref{Q_p for m_s}).

We will now use the layer of a semi-global field $F$ over an $n$-local field to calculate $m(F)$. We will start with fully arboreal semi-global fields, and will need the following preliminary result about the set $\au(F)$ in the case that $n = 1$.
\begin{prop}
\label{au for 1-local}
Let $k$ be a field of characteristic $\ne 2$ such that
\begin{itemize}
	\item $m_s(k) = u_s(k) < \infty$, and
	
	\item $m(L) = u(L) = 2u_s(k)$ for all finitely generated field extensions $L$ over $k$ of transcendence degree one.
\end{itemize}
Let $T$ be a complete discrete valuation ring with residue field $k$, and let $\mathscr{X}$ be a regular connected projective $T$-curve with function field $F$ and reduction graph $\Gamma$. Then
\[
	\emph{AU}(F) = \begin{cases}
		\left\{2, 4u_s(k) \right\} &\text{ if $\Gamma$ is not a tree}, \\
		\left\{4u_s(k)\right\} &\text{ if $\Gamma$ is a tree}.
	\end{cases}
\]
\end{prop}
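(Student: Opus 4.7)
The plan is that this proposition is essentially a direct specialization of Theorem \ref{au equals union} once we pin down $\au(\kappa(\eta_i))$ for each generic point $\eta_i$ of an irreducible component of the closed fiber of $\mathscr{X}$. So the work is concentrated in one observation, and the two cases of the formula then fall out of the two cases of Theorem \ref{au equals union}.

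First I would let $X_1, \ldots, X_s$ denote the irreducible components of the closed fiber of $\mathscr{X}$, with respective generic points $\eta_1, \ldots, \eta_s$. Since $m_s(k) = u_s(k) < \infty$, the hypotheses of Theorem \ref{au equals union} are met, so
\[
\au(F) = \begin{cases} \{2\} \cup \bigcup_{i=1}^{s}\{r_1+r_2 \mid r_1, r_2 \in \au(\kappa(\eta_i))\} & \text{if } \Gamma \text{ is not a tree}, \\ \bigcup_{i=1}^{s}\{r_1+r_2 \mid r_1, r_2 \in \au(\kappa(\eta_i))\} & \text{if } \Gamma \text{ is a tree}. \end{cases}
\]

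The key step is then to compute each $\au(\kappa(\eta_i))$. Each $\kappa(\eta_i)$ is a finitely generated field extension of $k$ of transcendence degree one, so the second hypothesis on $k$ gives $m(\kappa(\eta_i)) = u(\kappa(\eta_i)) = 2u_s(k)$. Because the $m$-invariant and $u$-invariant bracket the set of dimensions of anisotropic universal forms, their coincidence forces $\au(\kappa(\eta_i)) = \{2u_s(k)\}$. Consequently $\{r_1+r_2 \mid r_1, r_2 \in \au(\kappa(\eta_i))\} = \{4u_s(k)\}$ for every $i$.

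Finally I would substitute into the displayed formula. By Corollary \ref{if strong m = strong u} we have $u_s(k) = 2^n$ for some $n \geq 0$, so $4u_s(k) \geq 4 > 2$ and the two elements listed in the non-tree case are distinct. This yields $\au(F) = \{2, 4u_s(k)\}$ when $\Gamma$ is not a tree and $\au(F) = \{4u_s(k)\}$ when $\Gamma$ is a tree, as claimed. There is no real obstacle here beyond citing Theorem \ref{au equals union} correctly; the content is entirely in observing that the restrictive hypothesis $m(L) = u(L) = 2u_s(k)$ collapses each local contribution $\au(\kappa(\eta_i))$ to a singleton.
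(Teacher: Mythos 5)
Your proof is correct and follows the paper's own argument essentially verbatim: both compute $m(\kappa(\eta_i)) = u(\kappa(\eta_i)) = 2u_s(k)$ from the second hypothesis, note this collapses each $\au(\kappa(\eta_i))$ to the singleton $\{2u_s(k)\}$, and then read off the result from Theorem~\ref{au equals union}. The small extra details you include (the bracketing of $\au$ between $m$ and $u$, and the check that $4u_s(k) > 2$) are fine but not needed to match the paper.
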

\begin{proof}
For the generic point $\eta$ of any irreducible component of the closed fiber of~$\mathscr{X}$, the field $\kappa(\eta)$ is a finitely generated transcendence degree one extension of $k$. So by our assumption on $k$, we have $m(\kappa(\eta)) = u(\kappa(\eta)) = 2u_s(k)$. The claim now follows immediately from Theorem \ref{au equals union}.
\end{proof}
\begin{examples} \label{1-local examples}
Let $K$ be a complete discretely valued field with residue field $k$ of characteristic $\ne 2$, and let $F$ be a one-variable function field over $K$. 
\begin{enumerate}[label=(\arabic*)]
	\item If $m_s(k) = u_s(k) = 1$ (e.g., $k$ is algebraically closed), then $\au(F) = \{4\}$ or $\{2, 4\}$.
	
	\item If $k$ is a finite field, then $\au(F) = \{8\}$ or $\{2, 8\}$.
\end{enumerate}
In both cases, $2 \in \au(F)$ if and only if the reduction graph a regular model of $F$ is not a tree.
\end{examples}

We now compute the $m$-invariant of a fully arboreal semi-global field.
\begin{prop}
\label{m-inv of fully arboreal}
Let $k$ be a field of characteristic $\ne 2$ such that
\begin{itemize}
	\item $m_s(k) = u_s(k) < \infty$, and
	
	\item $m(L) = u(L) = 2u_s(k)$ for all finitely generated field extensions $L$ over $k$ of transcendence degree one.
\end{itemize}
For any integer $n \geq 1$ let $K$ be an $n$-local field over $k$. 
If $F$ is a fully arboreal semi-global field over~$K$, then $m(F) = 2^{n+1}u_s(k)$.
\end{prop}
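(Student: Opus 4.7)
The plan is to proceed by induction on $n$, with $k$ (and its two standing hypotheses) held fixed throughout. The key observation in both the base case and the inductive step is that ``fully arboreal'' forces the reduction graph of a regular model of $F$ to be a tree, so that the $m$-invariant of $F$ can be computed as twice the minimum of the $m$-invariants at the generic points of the components of the closed fiber, via Corollary~\ref{m-inv of sg field is min of m-inv of residue fields}.

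For the base case $n = 1$, the field $K$ is a complete discretely valued field with residue field $k$, and $F$ being fully arboreal is equivalent to saying the rooted component tree of $F$ consists of the single white root vertex $v_F$; i.e., the reduction graph of any regular model of $F$ is a tree. Since $k$ satisfies both the $m_s = u_s < \infty$ hypothesis and the ``$m(L) = u(L) = 2u_s(k)$ for every transcendence degree one $L/k$'' hypothesis, Proposition~\ref{au for 1-local} applies and yields $\mathrm{AU}(F) = \{4u_s(k)\}$. In particular, $m(F) = 4u_s(k) = 2^{1+1}u_s(k)$, as required.

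For the inductive step, fix $n \geq 2$ and assume the proposition for $(n-1)$-local fields over $k$. Write $K = K_n$ with residue field $K' = K_{n-1}$, an $(n-1)$-local field over~$k$. Iterating Theorem~\ref{strong m of cdvf} using $m_s(k) = u_s(k) < \infty$ gives $m_s(K') = u_s(K') < \infty$, so Corollary~\ref{m-inv of sg field is min of m-inv of residue fields} is available. Let $\mathscr{X}$ be any regular model of $F$ over the valuation ring of $K$, with closed fiber whose irreducible components $X_1, \ldots, X_s$ have generic points $\eta_1, \ldots, \eta_s$. Because $F$ is fully arboreal, Remark~\ref{layer in terms of components} simultaneously gives (i) the reduction graph of $\mathscr{X}$ is a tree, and (ii) each $\kappa(\eta_i)$ is itself a fully arboreal semi-global field over the $(n-1)$-local field $K'$. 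Applying the inductive hypothesis to each $\kappa(\eta_i)$ yields $m(\kappa(\eta_i)) = 2^{(n-1)+1}u_s(k) = 2^n u_s(k)$ for every $i$. Corollary~\ref{m-inv of sg field is min of m-inv of residue fields} then gives
$$m(F) \;=\; 2 \min_{1 \leq i \leq s} m(\kappa(\eta_i)) \;=\; 2 \cdot 2^n u_s(k) \;=\; 2^{n+1} u_s(k),$$
completing the induction.

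There is no serious obstacle: the proof reduces to assembling Proposition~\ref{au for 1-local} (for the base case) with Remark~\ref{layer in terms of components} and Corollary~\ref{m-inv of sg field is min of m-inv of residue fields} (for the inductive step). The only point requiring a moment's care is that the two hypotheses on $k$ stay in force throughout the induction because $k$ is fixed and the induction runs solely on $n$; no analogous conditions need to be transferred to the intermediate residue fields $K_{n-1}, K_{n-2}, \ldots, K_1$.
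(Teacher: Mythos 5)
Your proof is correct and follows essentially the same route as the paper's: induction on $n$, with Proposition~\ref{au for 1-local} supplying the base case and Corollary~\ref{m-inv of sg field is min of m-inv of residue fields} together with the fully-arboreal propagation to the $\kappa(\eta_i)$ driving the inductive step. Your explicit check that $m_s(K_{n-1}) = u_s(K_{n-1}) < \infty$ (via iterating Theorem~\ref{strong m of cdvf}) before invoking Corollary~\ref{m-inv of sg field is min of m-inv of residue fields} is a small but welcome addition of rigor that the paper leaves implicit.
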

\begin{proof}
We proceed by induction on $n \geq 1$. First suppose $n = 1$. Since $F$ is fully arboreal, the reduction graph of any regular model of $F$ is a tree. So $\au(F) = \{4u_s(k)\}$ by Proposition \ref{au for 1-local}, hence $m(F) = 2^{1+1}u_s(k)$, proving the base case.

Now suppose for some integer $n \geq 1$ that $m(E) = 2^{n+1}u_s(k)$ for all fully arboreal semi-global fields $E$ over any $n$-local field over~$k$. Let $K$ be an $(n+1)$-local field over $k$, and let $F$ be a fully arboreal semi-global field over $K$. Since $F$ is fully arboreal, for the generic point $\eta$ of any irreducible component of the closed fiber of any regular model of $F$, the field $\kappa(\eta)$ is fully arboreal as well. By the induction hypothesis, $m(\kappa(\eta)) =$~$2^{n+1}u_s(k)$. This then implies, by Corollary \ref{m-inv of sg field is min of m-inv of residue fields}, that $m(F) = 2^{n+2}u_s(k)$. This completes the proof. 
\end{proof}
Next, we use the layer of a semi-global field $F$ that is not fully arboreal to compute $m(F)$.
\begin{prop}
\label{m-inv if layer = j}
Let $k$ be a field of characteristic $\ne 2$ such that 
\begin{itemize}
	\item $m_s(k) = u_s(k) < \infty$, and 
	
	\item $m(L) = u(L) = 2u_s(k)$ for all finitely generated field extensions $L$ over $k$ of transcendence degree one.
\end{itemize}
For any integer $n \geq 1$ let $K$ be an $n$-local field over $k$, and let $F$ be a one-variable function field over $K$. If $F$ is not fully arboreal, then $m(F) = 2^{\emph{layer}(F)}$.
\end{prop}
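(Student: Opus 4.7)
I would prove this by induction on $n$, with the base case handled by Proposition \ref{m=2 iff loop} and the inductive step driven by Corollary \ref{m-inv of sg field is min of m-inv of residue fields} applied to a regular model of $F$, combined with Remark \ref{layer in terms of components}, which describes the layer recursively through the residue fields at generic points of components.

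For the base case $n = 1$: if $F$ is not fully arboreal over the 1-local field $K$, then the unique rooted component tree of $F$ consists of a single vertex which must be black, so $\text{layer}(F) = 1$. By Proposition \ref{m=2 iff loop} this means $m(F) = 2 = 2^{\text{layer}(F)}$.

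For the inductive step, assume the statement holds for semi-global fields over $m$-local fields for every $1 \le m < n$, and let $F$ be a non-fully-arboreal semi-global field over the $n$-local field $K$. If $\text{layer}(F) = 1$ the argument is as in the base case. Otherwise $\text{layer}(F) \ge 2$, so by definition some rooted component tree $\tau_F$ has its root colored white, i.e., the reduction graph of some (equivalently, every) regular model $\mathscr{X}$ of $F$ is a tree. Let $X_1, \ldots, X_s$ be the irreducible components of the closed fiber with generic points $\eta_1, \ldots, \eta_s$. Corollary \ref{m-inv of sg field is min of m-inv of residue fields} then gives
\[
    m(F) \;=\; 2\min_{i=1,\ldots,s} m(\kappa(\eta_i)),
\]
so it suffices to compute this minimum.

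Each $\kappa(\eta_i)$ is a semi-global field over the $(n-1)$-local field $K_{n-1}$, so by Remark \ref{layer in terms of components} the set $I_0 = \{i : \kappa(\eta_i) \text{ is not fully arboreal}\}$ is non-empty and $\text{layer}(F) = \min_{i \in I_0}\text{layer}(\kappa(\eta_i)) + 1$. For $i \in I_0$, the inductive hypothesis gives $m(\kappa(\eta_i)) = 2^{\text{layer}(\kappa(\eta_i))}$, and for $i \notin I_0$ Proposition \ref{m-inv of fully arboreal} (applied over $K_{n-1}$) gives $m(\kappa(\eta_i)) = 2^{n}u_s(k)$. Since $u_s(k) \ge 1$ and $\text{layer}(\kappa(\eta_i)) \le n-1$ for $i \in I_0$, we have $2^n u_s(k) \ge 2^n > 2^{n-1} \ge 2^{\text{layer}(\kappa(\eta_i))}$, so the minimum of $m(\kappa(\eta_i))$ is attained on $I_0$. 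Therefore
\[
    \min_i m(\kappa(\eta_i)) = \min_{i \in I_0} 2^{\text{layer}(\kappa(\eta_i))} = 2^{\text{layer}(F) - 1},
\]
and multiplying by $2$ yields $m(F) = 2^{\text{layer}(F)}$, completing the induction.

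The only subtle point is making sure the minimum in Corollary \ref{m-inv of sg field is min of m-inv of residue fields} is genuinely controlled by the non-fully-arboreal components; this is the observation that the fully arboreal residue fields have strictly larger $m$-invariant (their $m$-invariant is $2^n u_s(k)$, while the bound $2^{\text{layer}(F)-1}$ coming from $I_0$ is at most $2^{n-1}$). Everything else is bookkeeping using the recursive structure built into Definition \ref{layer defin} and Remark \ref{layer in terms of components}.
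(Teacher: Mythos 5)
Your proof is correct and follows essentially the same route as the paper: induction on $n$, with the base case via Proposition \ref{m=2 iff loop} (the paper cites Lemma \ref{loop implies m=2}, an equivalent statement), and the inductive step via Corollary \ref{m-inv of sg field is min of m-inv of residue fields}, Remark \ref{layer in terms of components}, and Proposition \ref{m-inv of fully arboreal}. The comparison you spell out — that the fully arboreal components contribute $2^{n}u_s(k)$, which strictly exceeds the $2^{\text{layer}(\kappa(\eta_i))} \le 2^{n-1}$ coming from $I_0$, so the minimum is attained on $I_0$ — is the same bookkeeping the paper performs (with indices shifted by one), just made more explicit.
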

\begin{proof}
Since $F$ is not fully arboreal, we have $1 \leq \layer(F) \leq n$. We prove the claim by induction on $n \geq 1$. For the base case of $n = 1$, if $\layer(F) = 1$, then the reduction graph of any regular model of $F$ is not a tree, so $m(F) = 2 = 2^{\layer(F)}$ by Lemma~\ref{loop implies m=2}. This proves the base case.

Now suppose for some integer $n \geq 1$ that the claim is true for all $n$-local fields over $k$, and let $K$ be an $(n+1)$-local field over $k$. Let $F$ be a one-variable function field over $K$ with $1 \leq \layer(F) \leq n+1$, and let $\mathscr{X}$ be a regular model of $F$. Let $X$ be the closed fiber of $\mathscr{X}$ with irreducible components $X_1, \ldots, X_s$, and for $1 \leq i \leq s$ let $\eta_i$ be the generic point of $X_i$. As we saw in the base case, if $\layer(F) = 1$, then $m(F) = 2 = 2^{\layer(F)}$. If $1 < \layer(F) \leq$~$n+1$, then the reduction graph of~$\mathscr{X}$ is a tree and, as we saw in Remark \ref{layer in terms of components}, the set $I_0 =$~$\{i \in \{1, \ldots, s\} \mid$~$\kappa(\eta_i) \text{ not fully arboreal}\}$ is non-empty and $\layer(F) = \min_{i \in I_0} \{\layer(\kappa(\eta_i))\} + 1$. By the induction hypothesis, for each $i \in I_0$ we have $m(\kappa(\eta_i)) = 2^{\layer(\kappa(\eta_i))} \leq 2^n$. Moreover, if $i \in \{1, \ldots, s\} \setminus I_0$, then $\kappa(\eta_i)$ is fully arboreal and $m(\kappa(\eta_i)) = 2^{n+1}u_s(k)$ by Proposition~\ref{m-inv of fully arboreal}. So by Corollary \ref{m-inv of sg field is min of m-inv of residue fields} we have
\[
	m(F) = \min_{i = 1, \ldots, s} \{2m(\kappa(\eta_i))\} = \min_{i \in I_0} \{2m(\kappa(\eta_i))\} = \min_{i \in I_0} \left\{2^{\layer(\kappa(\eta_i)) + 1}\right\} = 2^{\layer(F)}.
\]
This completes the proof. 
\end{proof}

\begin{cor}
\label{possible m-invariants}
Let $k$ be a field of characteristic $\ne 2$ such that 
\begin{itemize}
	\item $m_s(k) = u_s(k) < \infty$, and
	
	\item $m(L) = u(L) = 2u_s(k)$ for all finitely generated field extensions $L$ over $k$ of transcendence degree one.
\end{itemize}
For any integer $n \geq 1$ let $K$ be an $n$-local field over $k$, and let $F$ be a one-variable function field over $K$. Then there is some integer $r \geq 0$ such that
\[
	m(F) \in \left\{2^j \mid j = 1, 2, \ldots, n, n + r + 1\right\}.
\]
Moreover, for each $j \in \{1, 2, \ldots, n, n+r+1\}$ there is a one-variable function field $F_j$ over $K$ with $m(F_j) = 2^j$.
\end{cor}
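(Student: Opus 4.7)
The plan is to reduce this corollary directly to the two classification propositions that precede it, together with the existence result for layers in Proposition \ref{curves with layer j}. Since $m_s(k) = u_s(k) < \infty$ by hypothesis, Corollary \ref{if strong m = strong u} lets us write $u_s(k) = 2^r$ for some integer $r \geq 0$; this is the integer $r$ claimed in the statement.

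For the first assertion, I would split into the two cases handled by Sections preceding the corollary. If $F$ is fully arboreal, then Proposition \ref{m-inv of fully arboreal} gives $m(F) = 2^{n+1} u_s(k) = 2^{n+1} \cdot 2^r = 2^{n+r+1}$. If $F$ is not fully arboreal, then $1 \leq \text{layer}(F) \leq n$ by Definition \ref{layer defin}, and Proposition \ref{m-inv if layer = j} gives $m(F) = 2^{\text{layer}(F)} \in \{2, 4, \ldots, 2^n\}$. Combining the two cases yields $m(F) \in \{2^j \mid j = 1, 2, \ldots, n, n+r+1\}$.

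For the second assertion, I would produce the examples one dimension at a time. For each $j \in \{1, \ldots, n\}$, Proposition \ref{curves with layer j} provides a semi-global field $F_j$ over $K$ with $\text{layer}(F_j) = j$, and then Proposition \ref{m-inv if layer = j} gives $m(F_j) = 2^j$ (in particular $F_j$ is not fully arboreal, since its layer is finite). For $j = n+r+1$, I would take $F_{n+r+1} = K(x)$: by Example \ref{fully arboreal example} this is fully arboreal, so Proposition \ref{m-inv of fully arboreal} gives $m(F_{n+r+1}) = 2^{n+1} u_s(k) = 2^{n+r+1}$.

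There is essentially no obstacle here: the entire content of the corollary is a bookkeeping consequence of Propositions \ref{m-inv of fully arboreal}, \ref{m-inv if layer = j}, and \ref{curves with layer j}, plus the fact that $u_s(k)$ is a power of $2$. The only minor point to keep in mind is that the layers $1, 2, \ldots, n$ are exactly the possible finite layers (Definition \ref{layer defin} bounds the layer by $n$), so no intermediate powers of $2$ between $2^n$ and $2^{n+r+1}$ can appear unless $r = 0$, in which case $n+r+1 = n+1$ and the two sets overlap cleanly.
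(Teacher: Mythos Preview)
Your proposal is correct and follows essentially the same approach as the paper: both obtain $r$ from Corollary~\ref{if strong m = strong u}, derive the first claim by splitting into the fully arboreal and non-fully-arboreal cases via Propositions~\ref{m-inv of fully arboreal} and~\ref{m-inv if layer = j}, and handle the second claim using Proposition~\ref{curves with layer j} for $j=1,\ldots,n$ together with the fully arboreal example $F_{n+r+1}=K(x)$ from Example~\ref{fully arboreal example}.
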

\begin{proof}
Since $m_s(k) = u_s(k) < \infty$, Corollary~\ref{if strong m = strong u} implies that $m_s(k) = u_s(k) = 2^r$ for some integer $r \geq 0$. The first claim of the corollary then follows from Propositions \ref{m-inv of fully arboreal} and \ref{m-inv if layer = j}.

Now consider the second claim of the corollary. By Proposition \ref{curves with layer j}, for each $j = 1, \ldots, n$ there is a one-variable function field $F_j$ over $K$ with $\layer(F_j) = j$. Thus $m(F_j) = 2^j$ by Proposition \ref{m-inv if layer = j}. Next, if $F_{n+r+1} = K(x)$, then $F_{n+r+1}$ is fully arboreal (see Example \ref{fully arboreal example}), so $m(F_{n+r+1}) = 2^{n+r+1}$ by Proposition \ref{m-inv of fully arboreal}.
\end{proof}

\begin{examples}
\label{m-invariants over n-local fields}
For any integer $n \geq 1$ let $K$ be an $n$-local field over a field $k$ of characteristic $\ne 2$, and let $F$ be a one-variable function field over $K$. 
\begin{enumerate}[label=(\arabic*)]
	\item If $m_s(k) = u_s(k) = 1$ (e.g., $k$ is algebraically closed), then $m(F) \in \left\{2^j \mid j = 1, 2, \ldots, n, n+1\right\}$.
	
	\item If $k$ is a finite field, then $m(F) \in \left\{2^j \mid j = 1, 2, \ldots, n, n+2 \right\}$.
\end{enumerate}
\end{examples}

\begin{remark}
For any integer $n \geq 2$ let $K$ be an $n$-local field over a field $k$ of characteristic $\ne 2$ such that $m_s(k) = u_s(k) < \infty$, and let $F$ be a one-variable function field over $K$. As we have seen, if $\eta$ is the generic point of an irreducible component of the closed fiber of any regular model of $F$, then the field $\kappa(\eta)$ is itself a semi-global field over an $(n-1)$-local field over $k$. As a result, we can inductively use Theorem \ref{au equals union} to calculate $\au(F)$, as illustrated by the following examples.
\end{remark}
\begin{examples}
\begin{enumerate}[label=(\arabic*)]
	\item Let $k$ be a finite field of odd characteristic. If $F_1$ is a one-variable function field over a 1-local field $K_1$ over $k$, then $\au(F_1) = \{8\} \text{ or } \{2, 8\}$ (see Examples \ref{1-local examples}(2)). Now, if $F_2$ is a one-variable function field over a 2-local field $K_2$ over $k$, then by the previous sentence and Theorem~\ref{au equals union} we conclude $\au(F_2) = \{16\}, \{2, 16\}, \{4, 10, 16\}, \text{ or } \{2, 4, 10, 16\}$. Furthermore, we can use the geometry associated to $F_2$ to distinguish among these four cases. For instance, if $F_2$ has a regular model whose reduction graph is a tree and whose closed fiber has an irreducible component with generic point $\eta$ such that the reduction graph of a regular model of $\kappa(\eta)$ is not a tree, then $\au(F_2) = \{4, 10, 16\}$.
	
	\item If $F_3$ is a one-variable function field over a 3-local field $K_3$ over an algebraically closed field~$k$ of characteristic $\ne 2$, then there are ten possibilities for $\au(F_3)$:
	\begin{align*}
		&\{16\}, \{4, 10, 16\}, \{8, 10, 12, 14, 16\}, \{4, 8, 10, 12, 14, 16\}, \{4, 6, 8, 10, 12, 14, 16\}, \{2, 16\}, \\
		&\{2, 4, 10, 16\}, \{2, 8, 10, 12, 14, 16\}, \{2, 4, 8, 10, 12, 14, 16\}, \text{ and } \{2, 4, 6, 8, 10, 12, 14, 16\}.
	\end{align*}
Similar geometric considerations to those seen in the previous example can be used to distinguish among these ten cases.
\end{enumerate}
\end{examples}

\begin{ack} The author would like to thank David Harbater, Julia Hartmann, Daniel Krashen, and Florian Pop for inspiring discussions and comments regarding the material of this article. This paper is based on part of the author's Ph.D. thesis, completed under the supervision of David Harbater at the University of Pennsylvania.
\end{ack}

\providecommand{\bysame}{\leavevmode\hbox to3em{\hrulefill}\thinspace}
\providecommand{\href}[2]{#2}

\medskip

\noindent{\bf Author Information:}\\

\noindent Connor Cassady\\
Department of Mathematics, The Ohio State University, Columbus, OH 43210-1174, USA\\
email: cassady.82@osu.edu

\end{document}